\def\mathcolor#1#{\@mathcolor{#1}}
\def\@mathcolor#1#2#3{%
\protect\leavevmode
\begingroup
\color#1{#2}#3%
\endgroup
}
\newcommand{\lfrac}[2]{\ensuremath{{#1}/{#2}}}
\newcommand{\Field}{\ensuremath{\mathbb{K}}}
\DeclareMathOperator{\tensorproduct}{\ensuremath{\otimes}}
\newcommand{\Inverse}[1]{\ensuremath{{#1}^{-1}}}
\newcommand{\RR}{\ensuremath{\mathbb{R}}}
\newcommand{\BDual}[1]{\ensuremath{{{\bigl({#1}\bigr)}^{*}}}}
\newcommand{\mat}[1]{\ensuremath{\mathsf{#1}}}%
\newcommand{\matrixsize}[2]{\ensuremath{{{#1}{\times}{#2}}}}
\newcommand{\MatrixProduct}[2]{\ensuremath{{{\mat{#1}}\cdot{\mat{#2}}}}}
\newcommand{\Matrices}[2]{\ensuremath{{{#1}^{#2}}}}
\newcommand{\MatrixRank}[1]{\ensuremath{{\textup{Rank}\,{#1}}}}
\DeclareMathOperator{\Trace}{\ensuremath{\textup{Trace}}}
\newcommand{\Transpose}[1]{\ensuremath{{\mat{#1}}^{\intercal}}}
\newcommand{\vectorization}[1]{\ensuremath{\textup{vec}{#1}}}%
\newcommand{\vectorif}[2]{\ensuremath{{\left(#1\right)}_{#2}}}
\newcommand{\HadamardProduct}[2]{\ensuremath{\mat{#1}\odot\mat{#2}}}
\newcommand{\InvTranspose}[1]{{{\mat{#1}}^{-\intercal}}}
\newenvironment{smatrix}{\left[\begin{smallmatrix}}{\end{smallmatrix}\right]}
\def\triadone{brown}
\def\triadtwo{red}
\def\triadthree{blue}
\def\triadfour{green}
\def\triadfive{magenta}
\def\triadsix{gray}
\def\triadseven{violet}
\newcommand{\firstdim}{\ensuremath{m}}
\newcommand{\seconddim}{\ensuremath{k}}
\newcommand{\thirddim}{\ensuremath{n}}
\newcommand{\tensor}[1]{\ensuremath{\mathcal{#1}}}
\newcommand{\Contraction}[3]{\ensuremath{{{#1}\!\mid^{#2}_{#3}}}}
\newcommand{\LeftTensor}[1]{\ensuremath{{\mat{M}}_{#1}}}
\newcommand{\RightTensor}[1]{\ensuremath{{\mat{N}}_{#1}}}
\newcommand{\ProductTensor}[1]{\ensuremath{{\mat{O}}^{#1}}}
\newcommand{\HMRepresentation}[3]{{\ensuremath{\left[{{#1};{#2};{#3}}\right]}}}
\DeclareMathOperator{\IsotropyComposition}{\circ}
\newcommand{\Isotropy}[1]{\ensuremath{\mathsf{#1}}}
\newcommand{\IsotropyAction}[2]{\ensuremath{{#1}\diamond{#2}}}
\newcommand{\GrowthFactor}[1]{\ensuremath{{\gamma{\left(#1\right)}}}}
\newcommand{\bbigO}[1]{\ensuremath{O\bigl({#1}\bigl)}}
\newcommand{\row}[2]{{#1}_{#2}}
\newcommand{\norm}[1]{\left\|#1\right\|}
\newcommand{\textbignorm}[1]{{\bigl\|{#1}\bigr\|}}
\newcommand{\dualnorm}[1]{\left\|#1\right\|_*}
\newcommand{\xnorm}[2]{\left\|#1\right\|_{#2}}
\newcommand{\xnormexp}[3]{\left\|#1\right\|_{#2}^{#3}}
\newcommand{\maxnorm}[1]{{\left\|{#1}\right\|}_\infty}
\newcommand{\textmaxnorm}[1]{{{\|{#1}\|}_{\infty}}}
\newcommand{\textbigmaxnorm}[1]{{{\bigl\|{#1}\bigr\|}_{\infty}}}
\newcommand{\onenorm}[1]{\left\|#1\right\|_1}
\newcommand{\twonorm}[1]{\left\|#1\right\|_2}
\newcommand{\Fnorm}[1]{\left\|#1\right\|_F}
\newcommand{\threenorm}[1]{\left\|#1\right\|_3}
\newcommand{\zeronorm}[1]{\left\|#1\right\|_0}
\newcommand{\ulp}{\varepsilon}
\newcommand{\comp}[1]{{\widehat{#1}}}
\newcommand{\matr}[2]{\text{Mat}_{#2}(#1)}
\newcommand{\plinopt}{\href{https://github.com/jgdumas/plinopt}{\textsc{PLinOpt}}}
\algnewcommand{\IfThen}[2]{%
\State{}\algorithmicif\ #1\ \algorithmicthen\ #2}
\algnewcommand{\IfThenEnd}[2]{%
\State{}\algorithmicif\ #1\ \algorithmicthen\ #2\ \algorithmicend\ \algorithmicif}
\algnewcommand{\IfThenElse}[3]{%
\State{}\algorithmicif\ #1\ \algorithmicthen\ #2\ \algorithmicelse\ #3}
\algnewcommand{\ForDoEnd}[3][]{%
\ifthenelse{\equal{#1}{}}%
{\State{}\algorithmicfor\ #2\ \algorithmicdo\ #3\ \algorithmicend\ \algorithmicfor}
{\State\label{#1}\algorithmicfor\ #2\ \algorithmicdo\ #3\ \algorithmicend\ \algorithmicfor}
}
\Crefname{proposition}{Proposition}{Propositions}
\crefname{proposition}{Proposition}{Propositions}
\crefname{equation}{Eq.}{Eqs.}
\Crefname{equation}{Equation}{Equations}
\title{Strassen's algorithm is not optimally accurate}
\author{Jean-Guillaume Dumas}
\affiliation{%
\institution{Universit\'e Grenoble Alpes}
\department{UMR CNRS 5224 LJK}
\streetaddress{700 avenue centrale, IMAG --- CS 40700}
\city{38058 Grenoble}
\country{France}
}
\author{Cl\'ement Pernet}
\affiliation{%
\institution{Univ.\ Grenoble Alpes, Grenoble INP}
\department{UMR CNRS 5224 LJK}
\streetaddress{700 avenue centrale, IMAG --- CS 40700}
\city{38058 Grenoble}
\country{France}
}
\author{Alexandre Sedoglavic}
\affiliation{%
\institution{Universit\'e de Lille}
\department{UMR CNRS 9189 CRISTAL}
\streetaddress{Cit\'e scientifique}
\city{59650 Villeneuve d'Ascq}
\country{France}
}
\theoremstyle{acmdefinition}
\newcounter{generalthm}
\newtheorem{theorem}[generalthm]{Theorem}
\newtheorem{definition}[generalthm]{Definition}
\newtheorem{lemma}[generalthm]{Lemma}
\newtheorem{notation}[generalthm]{Notation}
\newtheorem{corollary}[generalthm]{Corollary}
\newtheorem{remark}[generalthm]{Remark}
\begin{document}
\begin{abstract}
We propose a non-commutative algorithm for multiplying~\({\matrixsize{2}{2}}\)-matrices using~\(7\) coefficient products.
This algorithm reaches simultaneously a better accuracy in practice compared to previously known such fast~\({\matrixsize{2}{2}}\) algorithms and a time complexity bound with the best currently known leading term (obtained via alternative basis sparsification).
To build this algorithm, we consider matrix and tensor norm bounds governing the stability and accuracy of numerical matrix multiplication.
First, we reduce those bounds by minimizing a growth factor along the unique orbit of Strassen's~\({\matrixsize{2}{2}}\)-matrix multiplication tensor decomposition.
Second, we develop heuristics that minimize the number of operations required to realize a bilinear formula, while further improving its accuracy.
Third, we perform an alternative basis sparsification that improves on the time complexity constant and mostly preserves the overall accuracy.
\end{abstract}
\begin{CCSXML}
<ccs2012>
<concept>
<concept_id>10010147.10010148.10010149.10010158</concept_id>
<concept_desc>Computing methodologies~Linear algebra algorithms</concept_desc>
<concept_significance>500</concept_significance>
</concept>
</ccs2012>
\end{CCSXML}
\thanks{This material is based on work supported in part by the
  \grantsponsor{anr}{Agence Nationale de la Recherche}{https://anr.fr}
  under grants
\grantnum{anr}{\href{https://anr.fr//ProjetIA-15-IDEX-0002}{ANR-15-IDEX-0002}},
\grantnum{anr}{\href{https://anr.fr/Project-ANR-21-CE39-0006}{ANR-21-CE39-0006}},
\grantnum{anr}{\href{https://anr.fr/ProjetIA-22-PECY-0010}{ANR-22-PECY-0010}}%
}
\maketitle
\section{Introduction}\label{sec:introduction}
The first non-commutative algorithm for multiplying~\({\matrixsize{2}{2}}\)-matrices using~\(7\) coefficient products was discovered by Strassen~\cite{strassen:1969}.
It was subsequently proven that all such algorithms with~\(7\) multiplications all lie in a single isotropy orbit on Strassen's bilinear tensor decomposition~\cite{groot:1978a}.
We here study the numerical accuracy of the recursive application of these~\({\matrixsize{2}{2}}\) algorithms over the reals.
\par
We first propose a unified accuracy analysis of such recursive algorithms, generalizing some and improving on other state-of-the-art bounds~\cite{brent:1970a,bini:1980,demmel:2007a,ballard:2012a,BBDLS16,Dai:2023aa}.
Following the approach of~\cite{bini:1980}, we then seek to optimize the growth factor, a parameter governing the accuracy in these bounds, over Strassen's orbit.
Since the max-norm, producing the sharpest bounds, precludes smooth optimization, we relax the problem to optimizing a weaker growth factor in the Frobenius norm, which will later demonstrate to better reflect the practical accuracy observed in our experiments.
\par
The most efficient variants are then obtained from these bilinear formulas by minimizing the number of operations required to realize them.
Our heuristics for this make use of common sub-expression eliminations with rational coefficients, potential factorization via the kernel of the matrices defining the considered bilinear operators, as well as Tellegen's transposition principle.
\par
While preserving the complexity bound exponent of Strassen's algorithm,~\({n^{\log_{2}{\!7}}}\), those algorithms require slightly more operations, thus worsening the constant factor of the leading term.
We therefore finally propose further variants obtained by an alternative basis sparsification, similar to those introduced in~\cite{Karstadt:2017aa,Beniamini:2019aa}.
In fine, again thanks to a minimization of the number of operations required to realize them, we obtain variants having a time complexity bound with the best currently known leading term, that simultaneously improve on the accuracy (i.e.\ mostly preserving in practice the numerical accuracy with or without alternative basis sparsification).
\par
Our \textsc{c}++ tools for the minimization of the number of operations are gathered in the \plinopt~library~\cite{jgd:2024:plinopt}.
We also forked the Matlab framework of~\cite{Dai:2023aa} in~\cite{jgd:2024:mFMM} to experiment our implementations of the resulting fast and accurate~\(\matrixsize{2}{2}\) matrix multiplication algorithms.
\par
\Cref{sec:framework} presents the symmetries of matrix multiplication tensors that we will use.
In~\cref{sec:accuracyTheoriticalBound} we propose the unified error bounds on bilinear operators and matrix multiplication algorithms, highlighting how the growth factor parameter governs accuracy.
On a relaxed growth factor in norm~\(2\), we apply, in~\cref{sec:numericalStabilityMeasure}, a descent algorithm to reach some local minima and show in~\cref{sec:holder} that it lies within at most~\(2.6\%\) of the optimal.
Finally, \cref{sec:implem} presents our minimization heuristics and the obtained matrix multiplication algorithms and their associated accuracy benchmark.
\section{Matrix product seen as tensor}\label{sec:framework}
We recall here the formalism of tensor decomposition allowing to present clearly the symmetries, later used to search for more numerically accurate fast matrix multiplication algorithms in~\cref{sec:numericalStabilityMeasure}.
We start by briefly recalling tensorial representation of bilinear maps, through the example introduced by Strassen in~\cite{strassen:1969} of fast~\({\matrixsize{2}{2}}\)-matrix product, and we refer to~\cite{Landsberg:2016ab} for this framework.
The product~\({\mat{C}=\MatrixProduct{A}{B}}\) of~\({\matrixsize{2}{2}}\) matrices can be computed by Strassen algorithm using the following computations:
\begin{equation}\label{eq:StrassenMultiplicationAlgorithm}
\begin{array}{ll}
\mathcolor{\triadone}{\rho_{1}}\leftarrow{\mathcolor{\triadone}{a_{11}}(\mathcolor{\triadone}{b_{12}-b_{22}})},
&
\mathcolor{\triadfour}{\rho_{4}}\leftarrow{(\mathcolor{\triadfour}{a_{12}-a_{22}})(\mathcolor{\triadfour}{b_{21}+b_{22}})},
\\
\mathcolor{\triadtwo}{\rho_{2}}\leftarrow{(\mathcolor{\triadtwo}{a_{11}+a_{12}})\mathcolor{\triadtwo}{b_{22}}},
&
\mathcolor{\triadfive}{\rho_{5}}\leftarrow{(\mathcolor{\triadfive}{a_{11}+a_{22}})(\mathcolor{\triadfive}{b_{11}+b_{22}})},
\\
\mathcolor{\triadthree}{\rho_{3}}\leftarrow{(\mathcolor{\triadthree}{a_{21}+a_{22}}) \mathcolor{\triadthree}{b_{11}}},
&
\mathcolor{\triadseven}{\rho_{7}}\leftarrow{(\mathcolor{\triadseven}{a_{21}-a_{11}})(\mathcolor{\triadseven}{b_{11}+b_{12}})},
\\
\mathcolor{\triadsix}{\rho_{6}}\leftarrow{\mathcolor{\triadsix}{a_{22}}(\mathcolor{\triadsix}{b_{21}-b_{11}})},
&
 \begin{smatrix} c_{11} &c_{12} \\ c_{21} &c_{22} \end{smatrix}
 \!=\!
 \begin{smatrix}
 \mathcolor{\triadfive}{\rho_{5}} + \mathcolor{\triadfour}{\rho_{4}} - \mathcolor{\triadtwo}{\rho_{2}} + \mathcolor{\triadsix}{\rho_{6}} &
 \mathcolor{\triadsix}{\rho_{6}} + \mathcolor{\triadthree}{\rho_{3}} \\
 \mathcolor{\triadtwo}{\rho_{2}} + \mathcolor{\triadone}{\rho_{1}}&
 \mathcolor{\triadfive}{\rho_{5}} + \mathcolor{\triadseven}{\rho_{7}} + \mathcolor{\triadone}{\rho_{1}}- \mathcolor{\triadthree}{\rho_{3}}
 \end{smatrix}.
\end{array}\hspace{-15pt}
\end{equation}
This straight-line program (a.k.a.~\textsc{slp}) encodes the following bilinear map over a field~\(\Field\) with~\({\firstdim,\seconddim,\thirddim}\) equal to~\(2\):
\begin{equation}\label{eq:mxnTimesnxp}
\beta_{\textsc{mm}}(A,B):
\begin{array}[t]{ccl}
\Matrices{\Field}{\matrixsize{\firstdim}{\seconddim}}\times\Matrices{\Field}{\matrixsize{\seconddim}{\thirddim}}&\rightarrow&\Matrices{\Field}{\matrixsize{\firstdim}{\thirddim}},\\
(\mat{A},\mat{B})&\mapsto&\MatrixProduct{A}{B}.
\end{array}
\end{equation}
Indices~\({\firstdim,\seconddim,\thirddim}\) are kept in this section for the sake of clarity in order to distinguish easily the different spaces involved in the sequel.
\begin{definition}\label{def:FrobeniusInnerProduct}
The spaces~\({\Matrices{\Field}{\matrixsize{\cdot}{\cdot}}}\) can be endowed with the classical Frobenius inner product~\({{\langle\mat{M},\mat{N}\rangle}={\Trace({\MatrixProduct{\Transpose{M}}{\mat{N}}})}}\) that establishes an isomorphism between~\(\Matrices{\Field}{\matrixsize{\cdot}{\cdot}}\) and its dual space~\({{\bigl(\Matrices{\Field}{\matrixsize{\cdot}{\cdot}}\bigr)}^{*}}\).
\end{definition}

Frobenius inner product combines matrix product~(\ref{eq:mxnTimesnxp}) and the trilinear form~\({\Trace(\MatrixProduct{\Transpose{C}}{\MatrixProduct{A}{B}})}\) as follows:
\begin{equation}\label{eq:TrilinearForm}
\Contraction{\tensor{S}}{}{3}:
\begin{array}[t]{ccc}
{\Field}^{\matrixsize{\firstdim}{\seconddim}}\times{{\Field}^{\matrixsize{\seconddim}{\thirddim}}}\times{({\Field}^{\matrixsize{\firstdim}{\thirddim}})}^{*}&\rightarrow&{\Field},\\
(\mat{A},\mat{B},\Transpose{C})&\mapsto&\langle\mat{C},\MatrixProduct{A}{B}\rangle.
\end{array}
\end{equation}
As the space of trilinear forms is the canonical dual space of order three tensor products, Strassen algorithm~(\ref{eq:StrassenMultiplicationAlgorithm}) is encoded as the tensor decomposition~\(\tensor{S}\) of the matrix multiplication tensor in sum of seven rank-one tensors defined by the following relations:
\begin{equation}\label{eq:StrassenTensor}
\begin{array}{r}
\tensor{S}=\sum_{i=1}^{7}{\LeftTensor{i}}\!\tensorproduct\!{\RightTensor{i}}\!\tensorproduct\!{\ProductTensor{i}}=
\mathcolor{\triadfive}{{\begin{smatrix}1&0\\0&1\end{smatrix}}\!\tensorproduct\!{\begin{smatrix}1&0\\0&1\end{smatrix}}\!\tensorproduct\!\begin{smatrix}1&0\\0&1\\\end{smatrix}}
\\[\bigskipamount]
+
\mathcolor{\triadfour}{\begin{smatrix}0&1\\0&-1\\\end{smatrix}\!\tensorproduct\!\begin{smatrix}0&0\\1&1\\\end{smatrix}\!\tensorproduct\!\begin{smatrix}1&0\\0&0\\\end{smatrix}}
\!+\!
\mathcolor{\triadseven}{\begin{smatrix}-1&0\\1&0\\\end{smatrix}\!\tensorproduct\!\begin{smatrix}1&1\\0&0\\\end{smatrix}\!\tensorproduct\!\begin{smatrix}0&0\\0&1\\\end{smatrix}}
\\[\bigskipamount]
+
\mathcolor{\triadtwo}{\begin{smatrix}1&1\\0&0\\\end{smatrix}\!\tensorproduct\!\begin{smatrix}0&0\\0&1\\\end{smatrix}\!\tensorproduct\!\begin{smatrix}-1&0\\1&0\\\end{smatrix}}
\!+\!
\mathcolor{\triadone}{\begin{smatrix}1&0\\0&0\\\end{smatrix}\!\tensorproduct\!\begin{smatrix}0&1\\0&-1\\\end{smatrix}\!\tensorproduct\!\begin{smatrix}0&0\\1&1\\\end{smatrix}}
\\[\bigskipamount]
+
\mathcolor{\triadsix}{\begin{smatrix}0&0\\0&1\\\end{smatrix}\!\tensorproduct\!\begin{smatrix}-1&0\\1&0\\\end{smatrix}\!\tensorproduct\!\begin{smatrix}1&1\\0&0\\\end{smatrix}}
\!+\!
\mathcolor{\triadthree}{\begin{smatrix}0&0\\1&1\\\end{smatrix}\!\tensorproduct\!\begin{smatrix}1&0\\0&0\\\end{smatrix}\!\tensorproduct\!\begin{smatrix}0&1\\0&-1\end{smatrix}}
\end{array}
\end{equation}
in~\({\BDual{{\Field}^{\matrixsize{\firstdim}{\seconddim}}}\tensorproduct{\BDual{{\Field}^{\matrixsize{\seconddim}{\thirddim}}}}\tensorproduct{{\Field}^{\matrixsize{\firstdim}{\thirddim}}}}\) with~\({{\firstdim}={\seconddim}={\thirddim}={2}}\).
In the above tensor decomposition, each summand is a \emph{rank-one tensor} and its \emph{tensor rank} is the number~\(r\) of such element~(\(7\) there).
Given \Cref{eq:StrassenTensor}, multiplication formula~(\ref{eq:mxnTimesnxp}) implemented by~\cref{eq:StrassenMultiplicationAlgorithm} is obtained using the third 2-contraction of the tensor~\({\tensor{\tensor{S}}\tensorproduct{\mat{A}}\tensorproduct{\mat{B}}}\) as defined in the following map:
\begin{equation}
\label{eq:SecondContraction}
\begin{array}{c}
{\left({\BDual{\Matrices{\Field}{\matrixsize{\firstdim}{\seconddim}}}}\!\tensorproduct{\BDual{\Matrices{\Field}{\matrixsize{\seconddim}{\thirddim}}}}\!\tensorproduct{\Matrices{\Field}{\matrixsize{\firstdim}{\thirddim}}}\right)}
\!\tensorproduct\!
{\left({\Matrices{\Field}{\matrixsize{\firstdim}{\seconddim}}}\!\tensorproduct\!{\Matrices{\Field}{\matrixsize{\seconddim}{\thirddim}}}\right)}
\!\rightarrow\!
{\Matrices{\Field}{\matrixsize{\firstdim}{\thirddim}}}, \\[\smallskipamount]
{\left(\sum_{i=1}^{r}{\LeftTensor{i}}\!\tensorproduct\!{\RightTensor{i}}\!\tensorproduct\!{\ProductTensor{i}}\right)}
\tensorproduct
({\mat{A}}\tensorproduct{\mat{B}})\mapsto
\sum_{i=1}^{r}\langle{\LeftTensor{i}},\mat{A}\rangle
\langle{\RightTensor{i}},\mat{B}\rangle
{\ProductTensor{i}}.
\end{array}
\end{equation}
Some formalisms are more adapted to the design of algorithms computing efficiently the matrix product (as shown in~\cref{sec:implem}) than direct tensor decompositions.
For example, a nice concise representation was introduced in~\cite{hopcroft:1973}; it encodes the sum of rank-one tensors by three matrices as done for the Strassen tensor decomposition~(\ref{eq:StrassenTensor}) in the following three matrices~\({\mat{L}_{\tensor{S}},\mat{R}_{\tensor{S}}}\) and~\(\mat{P}_{\tensor{S}}\):
\begin{equation}\label{eq:StrassenHMRepresentation}
\begin{smatrix}%
\mathcolor{\triadfive}{1}&\mathcolor{\triadfive}{0}&\mathcolor{\triadfive}{0}& \mathcolor{\triadfive}{1}\\
\mathcolor{\triadfour}{0}&\mathcolor{\triadfour}{1}&\mathcolor{\triadfour}{0}&\mathcolor{\triadfour}{{-1}}\\
\mathcolor{\triadseven}{-1}&\mathcolor{\triadseven}{0}&\mathcolor{\triadseven}{1}& \mathcolor{\triadseven}{0}\\
 \mathcolor{\triadtwo}{1}&\mathcolor{\triadtwo}{1}&\mathcolor{\triadtwo}{0}& \mathcolor{\triadtwo}{0}\\
 \mathcolor{\triadone}{1}&\mathcolor{\triadone}{0}&\mathcolor{\triadone}{0}& \mathcolor{\triadone}{0}\\
 \mathcolor{\triadsix}{0}&\mathcolor{\triadsix}{0}&\mathcolor{\triadsix}{0}& \mathcolor{\triadsix}{1}\\
 \mathcolor{\triadthree}{0}&\mathcolor{\triadthree}{0}&\mathcolor{\triadthree}{1}& \mathcolor{\triadthree}{1}
\end{smatrix},\quad%
\begin{smatrix}%
 \mathcolor{\triadfive}{1}&\mathcolor{\triadfive}{0}&\mathcolor{\triadfive}{0}& \mathcolor{\triadfive}{1}\\
 \mathcolor{\triadfour}{0}&\mathcolor{\triadfour}{0}&\mathcolor{\triadfour}{1}& \mathcolor{\triadfour}{1}\\
 \mathcolor{\triadseven}{1}&\mathcolor{\triadseven}{1}&\mathcolor{\triadseven}{0}& \mathcolor{\triadseven}{0}\\
 \mathcolor{\triadtwo}{0}&\mathcolor{\triadtwo}{0}&\mathcolor{\triadtwo}{0}& \mathcolor{\triadtwo}{1}\\
 \mathcolor{\triadone}{0}&\mathcolor{\triadone}{1}&\mathcolor{\triadone}{0}&\mathcolor{\triadone}{-1}\\
\mathcolor{\triadsix}{-1}&\mathcolor{\triadsix}{0}&\mathcolor{\triadsix}{1}& \mathcolor{\triadsix}{0}\\
 \mathcolor{\triadthree}{1}&\mathcolor{\triadthree}{0}&\mathcolor{\triadthree}{0}& \mathcolor{\triadthree}{0}
\end{smatrix},\quad%
\Transpose{%
\begin{smatrix}%
 \mathcolor{\triadfive}{1}&\mathcolor{\triadfive}{0}&\mathcolor{\triadfive}{0}& \mathcolor{\triadfive}{1}\\
 \mathcolor{\triadfour}{1}&\mathcolor{\triadfour}{0}&\mathcolor{\triadfour}{0}& \mathcolor{\triadfour}{0}\\
 \mathcolor{\triadseven}{0}&\mathcolor{\triadseven}{0}&\mathcolor{\triadseven}{0}& \mathcolor{\triadseven}{1}\\
\mathcolor{\triadtwo}{-1}&\mathcolor{\triadtwo}{1}&\mathcolor{\triadtwo}{0}& \mathcolor{\triadtwo}{0}\\
 \mathcolor{\triadone}{0}&\mathcolor{\triadone}{1}&\mathcolor{\triadone}{0}& \mathcolor{\triadone}{1}\\
 \mathcolor{\triadsix}{1}&\mathcolor{\triadsix}{0}&\mathcolor{\triadsix}{1}& \mathcolor{\triadsix}{0}\\
 \mathcolor{\triadthree}{0}&\mathcolor{\triadthree}{0}&\mathcolor{\triadthree}{1}&\mathcolor{\triadthree}{-1}
\end{smatrix}
}.%
\end{equation}
\begin{notation}
Given an~\(\matrixsize{\firstdim}{\seconddim}\)-matrix~\(\mat{A}\), we denote by~\(\row{\mat{A}}{i}\) the~\(i\)th row and by~\(\vectorization{\mat{A}}\) the row-major vectorization of this matrix, i.e.\ the vector~\(v\) in~\(\RR^{\firstdim\seconddim}\) such that~\({v_{i\seconddim+j} = a_{i,j}}\).
We also denote by~\(\matr{v}{\firstdim,\seconddim}\) the reciprocal operation, building an~\(\matrixsize{\firstdim}{\seconddim}\) matrix from an~\(\firstdim\seconddim\)-dimensional vector.
Thus, the~\(i\)th line~\({\row{\mat{L}_{\tensor{S}}}{i}}\) (resp.~\({\row{\mat{R}_{\tensor{S}}}{i}}\)) of matrix~\(\mat{L}_{\tensor{S}}\) (resp.~\(\mat{R}_{\tensor{S}}\)) is the transposition of the row-major vectorization~\({\vectorization{\LeftTensor{i}}}\) of the first (resp.\ second~\({\vectorization{\RightTensor{i}}}\)) component of the~\(i\)th triad in~\Cref{eq:StrassenTensor} and the~\(i\)th column of matrix~\(\mat{P}_{\tensor{S}}\) is the column-major vectorization~\({\vectorization{\ProductTensor{i}}}\) of its third component.
\end{notation}
\begin{definition}\label{def:HMRepresentation}
This encoding of a tensor by three suitable matrices~\({\mat{L},\mat{R},\mat{P}}\) is called an \textsc{hm} representation and is denoted by~\(\HMRepresentation{\mat{L}}{\mat{R}}{\mat{P}}\).
\end{definition}
\Cref{eq:HMRepresentation2MatrixMultiplicationFormula} presented in~\cref{sec:accuracyTheoriticalBound} shows that the \textsc{hm} representation allows constructing \textsc{slp}s for the associated algorithms.
We show in~\cref{ssec:factor} that this could be done efficiently, e.g.\ using the kernel of~\(\mat{L}\) (resp.~\(\mat{R}\)) and Tellegen's transposition applied to~\(\mat{P}\).
Now we turn to symmetries of matrix product tensor decomposition.
Remark that the matrix product is associated to~\({\Trace(\MatrixProduct{\mat{A}}{\MatrixProduct{\mat{B}}{\mat{C}}})}\) by~\Cref{eq:TrilinearForm} and that, given invertible matrices~\({\mat{U},\mat{V},\mat{W}}\) of suitable sizes and the classical trace properties, this trace is equal to:
\begin{equation}\label{eq:isotropy}
\begin{array}{l}
\Trace\bigl(\Transpose{(\MatrixProduct{\MatrixProduct{\mat{A}}{\mat{B}}}{\mat{C}})}\bigr)
=\Trace(\MatrixProduct{\mat{C}}{\MatrixProduct{\mat{A}}{\mat{B}}})
=\Trace(\MatrixProduct{\mat{B}}{\MatrixProduct{\mat{C}}{\mat{A}}})\\
\textrm{and to}\ \Trace\bigl(\MatrixProduct{\Inverse{\mat{U}}}{\MatrixProduct{\mat{A}}{\mat{V}}}
\cdot\Inverse{\mat{V}}\cdot{\mat{B}}\cdot{\mat{W}}\cdot\Inverse{\mat{W}}\cdot{\mat{C}}\cdot{\mat{U}}\bigr).
\end{array}
\end{equation}
These relations illustrate the next theorem and induce the isotropy action on matrix product tensor decomposition presented below:
\begin{theorem}[{\cite[\S~2.8]{groot:1978a}}]\label{thm:groot}
The isotropy group of the~\(\matrixsize{\firstdim}{\firstdim}\) matrix multiplication tensor is the semidirect product~\({{{\textsc{psl}^{\pm}({{\Field}^{\firstdim}})}{}^{\times{3}}}\!\rtimes{\mathfrak{S}_{3}}}\), where~\(\textsc{psl}\) stands for the group of matrices of determinant~\({\pm{1}}\) and~\(\mathfrak{S}_{3}\) for the symmetric group on~\(3\) elements.
\end{theorem}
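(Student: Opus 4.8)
The plan is to establish the two containments separately: the easy one by exhibiting explicit isotropies, the substantive one by showing every isotropy has that shape, with the Skolem--Noether theorem as the engine. Throughout, an isotropy of the matrix multiplication tensor acts on the trilinear form $\Trace(\mat{A}\cdot\mat{B}\cdot\mat{C})$ of \Cref{eq:TrilinearForm}; since its three underlying spaces are all copies of $\Matrices{\Field}{\matrixsize{\firstdim}{\firstdim}}$, it is by definition a triple $(\phi,\psi,\chi)$ of linear automorphisms of $\Matrices{\Field}{\matrixsize{\firstdim}{\firstdim}}$ possibly post-composed with a permutation of the three arguments.

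First I would record the two families of symmetries already visible in \Cref{eq:isotropy}. On one hand, for invertible $\mat{U},\mat{V},\mat{W}$ the sandwiching substitution $(\mat{A},\mat{B},\mat{C})\mapsto(\Inverse{\mat{U}}\cdot\mat{A}\cdot\mat{V},\ \Inverse{\mat{V}}\cdot\mat{B}\cdot\mat{W},\ \Inverse{\mat{W}}\cdot\mat{C}\cdot\mat{U})$ is an isotropy, since the trace is conjugation-invariant; modulo the common scalar in the kernel of this parametrisation and modulo the central torus $(\mat{A},\mat{B},\mat{C})\mapsto(s\mat{A},t\mat{B},(st)^{-1}\mat{C})$ -- each of whose elements fixes every rank-one tensor and hence every tensor decomposition -- the sandwichings are carried by determinant-$\pm1$ representatives, which realises $\textsc{psl}^{\pm}(\Field^{\firstdim})^{\times 3}$. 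On the other hand $\Trace(\mat{A}\cdot\mat{B}\cdot\mat{C})$ is invariant under cyclic rotation of $(\mat{A},\mat{B},\mat{C})$ and, since transposing reverses products inside a trace, under $(\mat{A},\mat{B},\mat{C})\mapsto(\Transpose{C},\Transpose{B},\Transpose{A})$; a $3$-cycle and a transposition generate a copy of $\mathfrak{S}_{3}$ surjecting onto the permutations of the three tensor factors.

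The heart is the converse. I would take an arbitrary isotropy, compose it with the inverse of the explicit $\mathfrak{S}_{3}$-symmetry realising its permutation of factors, and thus reduce to a factor-preserving isotropy $(\phi,\psi,\chi)$ with $\chi(\mat{A}\cdot\mat{B})=\phi(\mat{A})\cdot\psi(\mat{B})$ for all $\mat{A},\mat{B}$. Setting $\mat{B}=\IdentityMatrix$ gives $\chi(\mat{A})=\phi(\mat{A})\cdot\mat{P}$ with $\mat{P}:=\psi(\IdentityMatrix)$, and $\mat{A}=\IdentityMatrix$ gives $\chi(\mat{B})=\mat{Q}\cdot\psi(\mat{B})$ with $\mat{Q}:=\phi(\IdentityMatrix)$; since $\chi$ is bijective, $\mat{P}$ and $\mat{Q}$ are invertible, whence $\phi(\mat{A})=\chi(\mat{A})\cdot\Inverse{\mat{P}}$ and $\psi(\mat{B})=\Inverse{\mat{Q}}\cdot\chi(\mat{B})$. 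Substituting back yields $\chi(\mat{A}\cdot\mat{B})=\chi(\mat{A})\cdot\Inverse{(\mat{Q}\cdot\mat{P})}\cdot\chi(\mat{B})$ with $\mat{Q}\cdot\mat{P}=\chi(\IdentityMatrix)$, so $\mat{A}\mapsto\Inverse{\chi(\IdentityMatrix)}\cdot\chi(\mat{A})$ is a unital, multiplicative, bijective linear self-map of $\Matrices{\Field}{\matrixsize{\firstdim}{\firstdim}}$, i.e.\ a $\Field$-algebra automorphism of this central simple algebra; by Skolem--Noether it is inner, $\Inverse{\chi(\IdentityMatrix)}\cdot\chi(\mat{A})=\mat{R}\cdot\mat{A}\cdot\Inverse{\mat{R}}$. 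Unwinding, $\chi$, then $\phi$ and $\psi$, are each sandwichings, so the factor-preserving isotropies are exactly the sandwichings, which by the normalisation above form $\textsc{psl}^{\pm}(\Field^{\firstdim})^{\times 3}$; this subgroup is normal (it is the kernel of the projection to $\mathfrak{S}_{3}$), the explicit $\mathfrak{S}_{3}$ splits that projection, and conjugating a sandwiching by such a permutation returns a sandwiching with permuted and transposed data, so the isotropy group is $\textsc{psl}^{\pm}(\Field^{\firstdim})^{\times 3}\rtimes\mathfrak{S}_{3}$.

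I expect the main obstacle to be not the algebraic core -- Skolem--Noether is standard -- but the bookkeeping around it: cleanly peeling off the permutation of factors so that one genuinely lands in the factor-preserving case, checking that $\mat{P},\mat{Q},\mat{R}$ are mutually consistent, and, most delicately, tracking exactly which scalar degrees of freedom act trivially on the tensor decompositions, so that the sandwiching group is identified with $\textsc{psl}^{\pm}(\Field^{\firstdim})^{\times 3}$ rather than with the strictly larger group of sandwichings of the bare tensor.
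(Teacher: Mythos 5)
The paper does not prove this theorem; it is quoted from \cite[\S~2.8]{groot:1978a}, with the relations of \Cref{eq:isotropy} given only as motivation for where the sandwichings and the $\mathfrak{S}_3$-action come from. Your argument is therefore a reconstruction of de Groote's proof rather than an alternative to anything in the paper, and it correctly follows that same two-step route: exhibit the sandwichings and the $\mathfrak{S}_3$-symmetries as isotropies, then use Skolem--Noether to show every factor-preserving isotropy is a sandwiching. The reduction to Skolem--Noether (set $\mat{B}=\IdentityMatrix$, then $\mat{A}=\IdentityMatrix$, and check that $\mat{A}\mapsto\Inverse{\chi(\IdentityMatrix)}\cdot\chi(\mat{A})$ is a unital, multiplicative, bijective linear map, hence an inner automorphism of the central simple algebra $\Matrices{\Field}{\matrixsize{\firstdim}{\firstdim}}$) is the right engine and is executed correctly, including the consistency check that $\phi$ and $\psi$ unwind into matching sandwichings.

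Two points should be tightened, and you rightly flag the second as the delicate one. First, you set up the isotropy as fixing the trilinear form of \Cref{eq:TrilinearForm} but then write the factor-preserving condition as $\chi(\mat{A}\mat{B})=\phi(\mat{A})\psi(\mat{B})$, which is the bilinear-map convention; the two are equivalent, but via the inverse Frobenius-adjoint of the third component, and the conversion should be made explicit so that the substitutions $\mat{B}=\IdentityMatrix$ and $\mat{A}=\IdentityMatrix$ are applied to the correct object. Second, after modding out the central torus (which, as you correctly observe, acts trivially on every rank-one tensor), one can indeed rescale $\mat{U},\mat{V},\mat{W}$ independently over $\RR$ to determinant $\pm1$; but that representative is unique only up to scalar triples $(\varepsilon_1\IdentityMatrix,\varepsilon_2\IdentityMatrix,\varepsilon_3\IdentityMatrix)$ with $\varepsilon_i\in\{\pm1\}$, which lie inside $\textsc{psl}^{\pm}(\Field^{\firstdim})^{\times3}$ and still act trivially on tensor decompositions. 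The sandwichings modulo trivial action are therefore the quotient of $\textsc{psl}^{\pm}(\Field^{\firstdim})^{\times 3}$ by this central $2$-group; your proof should either say so or make explicit the identification by which the theorem's statement absorbs that kernel into its (non-standard) ``\textsc{p}'' prefix.
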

\begin{definition}\label{lem:sandwiching}
Let~\(\Isotropy{g}\) denotes~\({(\mat{U}\times\mat{V}\times\mat{W})}\) in~\({{\textsc{psl}^{\pm}({\Field}^{\firstdim})}{}^{\times{3}}}\) and~\(\tensor{T}\) a rank-one tensor~\({\mat{A}\tensorproduct\mat{B}\tensorproduct\mat{C}}\); the action~\({\IsotropyAction{\Isotropy{g}}{\tensor{T}}}\) of~\(\Isotropy{g}\) on~\(\tensor{T}\) is the rank-one tensor~\({
{\left(\MatrixProduct{\InvTranspose{\mat{U}}}{\MatrixProduct{\mat{A}}{\Transpose{\mat{V}}}}\right)}\!
\tensorproduct\!
{\left(\MatrixProduct{\InvTranspose{\mat{V}}}{\MatrixProduct{\mat{B}}{\Transpose{\mat{W}}}}\right)}\!
\tensorproduct\!
{\left(\MatrixProduct{\InvTranspose{\mat{W}}}{\MatrixProduct{\mat{C}}{\Transpose{\mat{U}}}}\right)}}\).
This action is extended by additivity to higher tensor rank tensors.
\par
Given two isotropies~\(g_{1}\) defined by matrices~\({({\mat{U}_{1}}\times{\mat{V}_{1}}\times{\mat{W}_{1}})}\) and~\({g_{2}}\) defined by matrices~\({({\mat{U}_{2}}\times{\mat{V}_{2}}\times{\mat{W}_{2}})}\) both in~\({\textsc{psl}^{\pm}({{\Field}^{\firstdim}})}{}^{\times{3}}\), the composition~\({g_{1}\IsotropyComposition g_{2}}\) is given by~\({(\MatrixProduct{\mat{U}_{1}}{\mat{U}_{2}}\times{\MatrixProduct{\mat{V}_{1}}{\mat{V}_{2}}}\times{\MatrixProduct{\mat{W}_{1}}{\mat{W}_{2}}})}\).
\end{definition}
The isotropies action on an \textsc{hm} representation is a direct consequence of the above results and presented in the following lemma.
\begin{lemma}\label{lem:actionOnHMRepresentation}
Let~\(\Isotropy{g}\) be~\({{({\mat{U}}\times{\mat{V}}\times{\mat{W}})}}\) in~\({{\textsc{psl}^{\pm}({\Field}^{\firstdim})}^{\times{3}}}\) and~\(\HMRepresentation{\mat{L}}{\mat{R}}{\mat{P}}\) be an \textsc{hm} representation of a matrix product tensor decomposition, the action~\({\IsotropyAction{\Isotropy{g}}{\HMRepresentation{\mat{L}}{\mat{R}}{\mat{P}}}}\) of~\(\Isotropy{g}\) on~\(\HMRepresentation{\mat{L}}{\mat{R}}{\mat{P}}\) is another \textsc{hm} representation of a matrix product tensor decomposition defined by:
\begin{equation}\label{eq:isotropyActionOnHMRepresentation}
\HMRepresentation%
{\MatrixProduct{L}{\bigl({\Transpose{\mat{V}}\tensorproduct{\Inverse{\mat{U}}}}\bigr)}}%
{\MatrixProduct{R}{\bigl({\Transpose{\mat{W}}\tensorproduct{\Inverse{\mat{V}}}}\bigr)}}%
{\MatrixProduct{\bigl({{\mat{U}}\tensorproduct{\InvTranspose{\mat{W}}}}\bigr)}{P}}.
\end{equation}
\end{lemma}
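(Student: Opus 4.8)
The plan is to trace how a rank-one tensor transforms under $\Isotropy{g}$ at the level of the \textsc{hm} representation, and then extend by additivity. Recall from \Cref{def:HMRepresentation} and the surrounding notation that, for a decomposition $\sum_{i} \LeftTensor{i}\tensorproduct\RightTensor{i}\tensorproduct\ProductTensor{i}$, the $i$th row of $\mat{L}$ is $\Transpose{(\vectorization{\LeftTensor{i}})}$, the $i$th row of $\mat{R}$ is $\Transpose{(\vectorization{\RightTensor{i}})}$, and the $i$th column of $\mat{P}$ is $\vectorization{\ProductTensor{i}}$ (with the appropriate row-major / column-major conventions). So the statement is really three separate identities, one per factor, and each reduces to a standard fact: for matrices $\mat{X},\mat{M},\mat{Y}$ of compatible sizes, $\vectorization{(\MatrixProduct{X}{\MatrixProduct{M}{Y}})} = (\mat{X}\tensorproduct\Transpose{\mat{Y}})\,\vectorization{\mat{M}}$ in row-major vectorization (equivalently $(\Transpose{\mat{Y}}\tensorproduct\mat{X})\,\vectorization{\mat{M}}$ in column-major). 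I would first state this vectorization identity explicitly, fixing once and for all which Kronecker-factor order corresponds to which vectorization convention, since that is the only place sign- or transpose-bookkeeping can go wrong.

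Next I would apply it term by term. By \Cref{lem:sandwiching}, $\IsotropyAction{\Isotropy{g}}{(\LeftTensor{i}\tensorproduct\RightTensor{i}\tensorproduct\ProductTensor{i})}$ has first component $\MatrixProduct{\InvTranspose{\mat{U}}}{\MatrixProduct{\LeftTensor{i}}{\Transpose{\mat{V}}}}$. Row-major vectorizing and using the identity above, $\vectorization{(\MatrixProduct{\InvTranspose{\mat{U}}}{\MatrixProduct{\LeftTensor{i}}{\Transpose{\mat{V}}}})} = (\InvTranspose{\mat{U}}\tensorproduct\mat{V})\,\vectorization{\LeftTensor{i}}$; transposing to get the new $i$th row of $\mat{L}$ yields $\Transpose{(\vectorization{\LeftTensor{i}})}\cdot(\Inverse{\mat{U}}\tensorproduct\Transpose{\mat{V}})$, i.e.\ $\row{\mat{L}}{i}\cdot(\Transpose{\mat{V}}\tensorproduct\Inverse{\mat{U}})$ after reordering the Kronecker factors consistently with the chosen convention — which is exactly the $i$th row of $\MatrixProduct{L}{(\Transpose{\mat{V}}\tensorproduct\Inverse{\mat{U}})}$. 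The same computation on the second component $\MatrixProduct{\InvTranspose{\mat{V}}}{\MatrixProduct{\RightTensor{i}}{\Transpose{\mat{W}}}}$ gives the $i$th row of $\MatrixProduct{R}{(\Transpose{\mat{W}}\tensorproduct\Inverse{\mat{V}})}$. For the third component $\MatrixProduct{\InvTranspose{\mat{W}}}{\MatrixProduct{\ProductTensor{i}}{\Transpose{\mat{U}}}}$ I would instead use the column-major convention (as the notation dictates for $\mat{P}$): $\vectorization{(\MatrixProduct{\InvTranspose{\mat{W}}}{\MatrixProduct{\ProductTensor{i}}{\Transpose{\mat{U}}}})} = (\mat{U}\tensorproduct\InvTranspose{\mat{W}})\,\vectorization{\ProductTensor{i}}$, which is precisely the $i$th column of $\MatrixProduct{(\mat{U}\tensorproduct\InvTranspose{\mat{W}})}{P}$.

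Since every step so far is linear in the triple $(\LeftTensor{i},\RightTensor{i},\ProductTensor{i})$ and the action is extended to higher rank by additivity (\Cref{lem:sandwiching}), summing over $i$ shows that $\IsotropyAction{\Isotropy{g}}{\HMRepresentation{\mat{L}}{\mat{R}}{\mat{P}}}$ is the \textsc{hm} representation displayed in \Cref{eq:isotropyActionOnHMRepresentation}. Finally I would note it is still an \textsc{hm} representation of a matrix multiplication tensor decomposition: \Cref{lem:sandwiching} already records that $\IsotropyAction{\Isotropy{g}}{\tensor{S}}$ is a decomposition of the same tensor (this is the content of the isotropy action in \Cref{thm:groot}, the trace identities in \Cref{eq:isotropy} being the concrete witness), so nothing extra is needed. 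The only real obstacle is notational: getting the order of the Kronecker factors and the placement of transposes/inverses to match the paper's row-major-for-$\mat{L},\mat{R}$ versus column-major-for-$\mat{P}$ conventions exactly as written in \Cref{eq:isotropyActionOnHMRepresentation}; once the vectorization identity is pinned down with its convention, the rest is bookkeeping.
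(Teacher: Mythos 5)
Your overall strategy is sound, and it is the only sensible one here (the paper states this lemma without a proof, calling it ``a direct consequence of the above results''): apply the sandwiching action of \cref{lem:sandwiching} triad by triad, vectorize, invoke the Kronecker identity $\vectorization{(\mat{X}\mat{M}\mat{Y})}=(\mat{X}\tensorproduct\Transpose{\mat{Y}})\vectorization{\mat{M}}$ for row-major vectorization, and extend by additivity.

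The gap is exactly in the step you flag as ``bookkeeping'' and then do not actually carry out. From $\vectorization{(\InvTranspose{\mat{U}}\LeftTensor{i}\Transpose{\mat{V}})}=(\InvTranspose{\mat{U}}\tensorproduct\mat{V})\vectorization{\LeftTensor{i}}$ you correctly obtain $\row{\mat{L}}{i}\cdot(\Inverse{\mat{U}}\tensorproduct\Transpose{\mat{V}})$; you then assert this is ``i.e.\ $\row{\mat{L}}{i}\cdot(\Transpose{\mat{V}}\tensorproduct\Inverse{\mat{U}})$ after reordering the Kronecker factors.'' But the Kronecker product is not commutative, so $\Inverse{\mat{U}}\tensorproduct\Transpose{\mat{V}}\neq\Transpose{\mat{V}}\tensorproduct\Inverse{\mat{U}}$ in general; swapping the two factors requires conjugating by the commutation (vec-permutation) matrix, which is precisely the matrix that converts row-major into column-major vectorization. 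Concretely, with $\mat{U}=\mat{W}=\mat{I}$, $\mat{V}=\textup{diag}(2,\lfrac{1}{2})$ and $\LeftTensor{2}=\begin{smatrix}1&1\\0&0\end{smatrix}$ (row $4$ of $\mat{L}_{\tensor{S}}$, row-major vector $(1,1,0,0)$), the transformed component is $\LeftTensor{2}\Transpose{\mat{V}}=\begin{smatrix}2&\lfrac{1}{2}\\0&0\end{smatrix}$ with row-major vector $(2,\lfrac{1}{2},0,0)$; your intermediate $\row{\mat{L}}{4}(\Inverse{\mat{U}}\tensorproduct\Transpose{\mat{V}})$ gives exactly $(2,\lfrac{1}{2},0,0)$, whereas $\row{\mat{L}}{4}(\Transpose{\mat{V}}\tensorproduct\Inverse{\mat{U}})$ gives $(2,2,0,0)$. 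So the ``reordering'' step is not a cosmetic change of convention; it changes the answer, and your own intermediate expression is the one compatible with the paper's declared row-major convention for $\mat{L}$ and $\mat{R}$ (and with the explicit Strassen matrices in \cref{eq:StrassenHMRepresentation}), while the form stated in \cref{eq:isotropyActionOnHMRepresentation} is the one you would obtain from column-major vectorization of $\LeftTensor{i}$, $\RightTensor{i}$. Note that the $\mat{P}$ factor \emph{is} declared column-major, and there your computation does land exactly on $(\mat{U}\tensorproduct\InvTranspose{\mat{W}})\mat{P}$ without any reordering --- which should have been a hint that the $\mat{L},\mat{R}$ cases cannot be reconciled by mere ``reordering.'' To close the gap you must either (a) prove the lemma with the Kronecker factors in the order you actually derived, flagging a discrepancy with the displayed formula, or (b) switch $\mat{L},\mat{R}$ to column-major vectorization throughout and verify this against \cref{eq:StrassenHMRepresentation}; you cannot simply swap $\mat{A}\tensorproduct\mat{B}$ for $\mat{B}\tensorproduct\mat{A}$ and call it a convention.
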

Dealing with a tensor decomposition or with the associated \textsc{hm} representation is not strictly equivalent; In~\Cref{lem:sandwiching} there is no need to care about the determinants of the matrices~\({(\mat{U},\mat{V},\mat{W})}\) while this fact is no more true for \Cref{eq:isotropyActionOnHMRepresentation} as (say)~\(\mat{U}\) acts on two different components.
\par
The following theorem recalls that all~\(\matrixsize{2}{2}\)-matrix product algorithms with~\(7\) coefficient multiplications are obtained by this single orbit of the action of isotropies on Strassen tensor decomposition:
\begin{theorem}[{\cite[\S~0.1]{groot:1978}}]\label{thm:IsotropiesActTransitivelyOnOptimalAlgorithm}
The group~\({{\textsc{psl}^{\pm}({{\Field}^{\firstdim}})}^{\times{3}}}\) acts transitively on the variety of fast algorithms multiplying~\(\matrixsize{2}{2}\)-matrices.
\end{theorem}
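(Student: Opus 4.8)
Because~\cref{lem:sandwiching} defines a genuine action of~${\textsc{psl}^{\pm}({\Field}^{2})}^{\times 3}$ on tensor decompositions and fast~$\matrixsize{2}{2}$ algorithms are precisely the rank-$7$ decompositions of the~$\matrixsize{2}{2}$ matrix multiplication tensor, transitivity is equivalent to showing that every such decomposition~$\tensor{T}=\sum_{i=1}^{7}\LeftTensor{i}\tensorproduct\RightTensor{i}\tensorproduct\ProductTensor{i}$ lies in the orbit of Strassen's decomposition~$\tensor{S}$ of~\cref{eq:StrassenTensor}; the plan is to carry an arbitrary~$\tensor{T}$ onto~$\tensor{S}$ in three moves. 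As a preliminary observation, since the three flattenings of the~$\matrixsize{2}{2}$ multiplication tensor are injective (it is \emph{concise}), the seven matrices~$\LeftTensor{i}$ span~$\Matrices{\Field}{\matrixsize{2}{2}}$, and likewise for the~$\RightTensor{i}$ and the~$\ProductTensor{i}$; in particular none of them vanishes.

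The first move rests on the structural lemma of de~Groot, which I expect to be the main obstacle: in any rank-$7$ decomposition exactly one summand --- after renumbering, the one indexed by~$i_{0}$ --- has all three factors~$\LeftTensor{i_{0}},\RightTensor{i_{0}},\ProductTensor{i_{0}}$ invertible (a \emph{sandwiching} triad), while the six remaining summands have all three of their factors of rank~$1$; moreover the sandwiching factors satisfy a compatibility relation~$\LeftTensor{i_{0}}\RightTensor{i_{0}}\ProductTensor{i_{0}}=\pm\IdentityMatrix$ forced by their belonging to a decomposition of~$\tensor{S}$. I would prove this by the substitution method of Alder and Strassen: assuming no summand had all factors invertible, one contracts~$\tensor{T}$ against linear forms chosen to annihilate rank-$1$ factors, thereby restricting the~$\matrixsize{2}{2}$ multiplication tensor to~$\matrixsize{1}{2}$-by-$\matrixsize{2}{2}$ multiplication tensors of known rank; tracking the rank drops induced along this process forces~$\tensorrank{\tensor{T}}\ge 8$, a contradiction. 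This accounting, where the specific arithmetic of~$\matrixsize{2}{2}$ products rather than mere genericity enters, is the delicate part.

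The second move normalizes the sandwiching triad. By~\cref{lem:sandwiching} the action sends~$\LeftTensor{i_{0}}\tensorproduct\RightTensor{i_{0}}\tensorproduct\ProductTensor{i_{0}}$ to~$(\InvTranspose{U}\,\LeftTensor{i_{0}}\,\Transpose{V})\tensorproduct(\InvTranspose{V}\,\RightTensor{i_{0}}\,\Transpose{W})\tensorproduct(\InvTranspose{W}\,\ProductTensor{i_{0}}\,\Transpose{U})$; choosing~$\mat{V}$ and~$\mat{W}$ in terms of~$\mat{U}$ makes the first two blocks equal to~$\IdentityMatrix$, and the compatibility relation then makes the third block~$\IdentityMatrix$ too, for every~$\mat{U}$. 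Rebalancing the scalar inside the rank-one tensor so that each sandwiching factor has determinant~$\pm 1$ --- possible over~$\RR$, and exactly the point where~$\textsc{psl}^{\pm}$ rather than~$\textsc{sl}$ is required --- lets~$\mat{U}$ range over~$\textsc{psl}^{\pm}({\Field}^{2})$, so we may assume~$\LeftTensor{i_{0}}=\RightTensor{i_{0}}=\ProductTensor{i_{0}}=\IdentityMatrix$. A direct check with the same action formula shows that the stabilizer of~$\IdentityMatrix\tensorproduct\IdentityMatrix\tensorproduct\IdentityMatrix$ in~${\textsc{psl}^{\pm}({\Field}^{2})}^{\times 3}$ is the diagonal subgroup~$\{(\mat{U},\mat{U},\mat{U})\}$, acting on every factor of every summand by one and the same conjugation~$\mat{M}\mapsto\Inverse{\mat{P}}\,\mat{M}\,\mat{P}$ with~$\mat{P}=\Transpose{U}$.

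The third move classifies, up to this simultaneous conjugation, the rank-$6$ decompositions of the residual tensor~$\tensor{S}-\IdentityMatrix\tensorproduct\IdentityMatrix\tensorproduct\IdentityMatrix$ (its rank equals~$6$ by subadditivity of tensor rank together with the optimality of Strassen's algorithm). By the structural lemma the six remaining triads have all factors of rank~$1$, say~$\LeftTensor{i}=u_{i}v_{i}^{\intercal}$ and likewise on the two other sides; substituting this into~$\tensor{S}-\IdentityMatrix\tensorproduct\IdentityMatrix\tensorproduct\IdentityMatrix=\sum_{i}\LeftTensor{i}\tensorproduct\RightTensor{i}\tensorproduct\ProductTensor{i}$ turns it into a finite system of bilinear equations on the lines~$[u_{i}]$, $[v_{i}]$ and their analogues, whose solution set I claim is, after relabeling summands and by a bounded computation, a single orbit of the conjugation action. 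Matching the resulting configuration with the six non-identity triads of~\cref{eq:StrassenTensor} then exhibits the conjugation carrying~$\tensor{T}$ exactly onto~$\tensor{S}$, and transitivity follows. Apart from the de~Groot structural lemma of the first move, this concluding rigidity check is the only remaining step requiring genuine work; everything in between is bookkeeping with the action formula of~\cref{lem:sandwiching}.
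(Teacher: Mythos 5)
The paper offers no proof of this statement: it is a direct citation of de~Groote's theorem, \cite[\S~0.1]{groot:1978}, and the surrounding text simply uses it to justify the orbit search in~\cref{sec:numericalStabilityMeasure}. There is therefore no internal argument to compare your attempt against; what follows is an assessment of your sketch on its own terms.

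Your three-move strategy (isolate a sandwiching triad, normalize it to~$\IdentityMatrix\tensorproduct\IdentityMatrix\tensorproduct\IdentityMatrix$ via the isotropy, then show the rank-$6$ residual is rigid up to diagonal conjugation) is broadly the shape of de~Groote's argument, but all the mathematical content is still outstanding. You flag the structural lemma as ``the delicate part'' --- rightly, since that lemma \emph{is} the theorem; the substitution-method paragraph gestures at Alder--Strassen counting but performs none of it, and notice that a naive count can only show~$\tensorrank{\tensor{T}}\ge 7$, not~$\ge 8$, so the contradiction you need requires more than ``tracking rank drops.'' Separately, the compatibility relation~$\LeftTensor{i_{0}}\RightTensor{i_{0}}\ProductTensor{i_{0}}=\pm\IdentityMatrix$ is asserted as ``forced by their belonging to a decomposition,'' but no reason is given. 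It is true on the orbit (a two-line check from the action formula), yet that is circular here: you invoke it in move two precisely to \emph{show} the decomposition lies in the orbit, so it needs an independent proof from the rank-$1$ structure of the other six triads, which is absent. Finally, move three reduces the theorem to ``a bounded computation'' showing the rank-$6$ residual decompositions form one conjugation orbit; this rigidity statement is again the crux and is left entirely to the reader. In short, the scaffolding is plausible, but each load-bearing claim is a placeholder, so as written this does not constitute a proof.
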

Thus, isotropy action on Strassen tensor decomposition may define other matrix product algorithm of same tensor rank but with potentially more interesting characteristics as shown in \Cref{sec:numericalStabilityMeasure}.
We make explicit these properties in the following section.
\section{Bilinear operator accuracy bound}\label{sec:accuracyTheoriticalBound}
We will consider that any finite-dimensional real vector space~\(\mathbb{U}\) is equipped with a norm~\(\norm{\cdot}\) and denote by~\(\dualnorm{\cdot}\) the related dual norm; for~\({\phi:\mathbb{U} \rightarrow \RR}\), its norm~\(\dualnorm{\phi}\) is~\({\text{sup}(|\phi(v)|,\|v\|\leq{1})}\).
For instance, the max-norm~\(\maxnorm{\cdot}\) and the one-norm~\(\onenorm{\cdot}\) are dual one with the other, while the two-norm~\(\twonorm{\cdot}\) is self-dual.
We will also denote the Hamming weight~\({\#\{i|x_i\neq 0\}}\) of~\(x\) by~\(\zeronorm{x}\).
The~\(n\)-dimensional vector of coefficients~\({x_1,\dots,x_n}\) is denoted by~\({{(x_i)}_{i\in\{1..n\}}}\) or more succinctly~\(\vectorif{x_i}{i}\) when the indexing is clear from the context.%
By extension, we denote~\({\norm{x}\norm{y}}\) by~\({\norm{x;y}}\) and~\({\norm{\mat{L}}\norm{\mat{R}}\norm{\mat{P}}}\) by~\({\norm{{\mat{L}};{\mat{R}};{\mat{P}}}}\).
\par
\begin{lemma}\label{lem:operatornorms}
For any matrix~\(\mat{A}\) in~\(\RR^{\matrixsize{\firstdim}{\seconddim}}\) and any vectors~\({x,y}\) in~\(\RR^{\seconddim}\) the following inequalities hold:
\begin{align}\label{eq:matvecmaxnorm}
	{|x \cdot y|} &\leq \dualnorm{x}\norm{y},\quad\quad\quad\quad\norm{\mat{A}x}\leq\norm{\vectorif{\dualnorm{\row{\mat{A}}{i}}}{i}}\norm{x},\\\label{eq:matvecop}
\textmaxnorm{\mat{A}x}&\leq\max_{i=1\dots{\firstdim}}\left({\textstyle\sum_{j=1}^{\seconddim}|a_{i,j}|}\right)\textmaxnorm{x}\leq{\seconddim\textmaxnorm{\mat{A}}\textmaxnorm{x}}.
\end{align}
\end{lemma}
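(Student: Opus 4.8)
The plan is to establish the three inequalities in turn: the first is essentially a restatement of the definition of the dual norm, the second follows from it by a coordinatewise application, and the last is a short computation specialising the second to the pair max-norm/one-norm.

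\emph{Left inequality of~\eqref{eq:matvecmaxnorm}.} View \(x\) as the linear form \(y\mapsto x\cdot y\). The definition of \(\dualnorm{x}\) gives \(|x\cdot v|\le\dualnorm{x}\) for every \(v\) with \(\norm{v}\le 1\); if \(y\neq 0\), apply this to \(v=y/\norm{y}\) and multiply through by \(\norm{y}\), and note the case \(y=0\) is trivial. This is just the classical H\"older / dual-norm inequality.

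\emph{Right inequality of~\eqref{eq:matvecmaxnorm}.} The \(i\)th coordinate of \(\mat{A}x\) is \(\row{\mat{A}}{i}\cdot x\), so the inequality just proven yields \(|(\mat{A}x)_i|\le\dualnorm{\row{\mat{A}}{i}}\,\norm{x}\) for every \(i\); hence the vector \(\mat{A}x\) is dominated, coordinatewise in absolute value, by \(\norm{x}\cdot\vectorif{\dualnorm{\row{\mat{A}}{i}}}{i}\). Passing from this coordinatewise domination to a bound on \(\norm{\mat{A}x}\) uses that the vector norms in play are absolute, i.e.\ monotone with respect to the coordinatewise order on absolute values --- which holds for \(\onenorm{\cdot}\), \(\twonorm{\cdot}\), \(\maxnorm{\cdot}\) and the Frobenius norm. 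This monotonicity step is the only point I expect to need a word of justification: it is where the choice of norm really enters (and it can fail for non-absolute norms), whereas everything else is formal. With it, \(\norm{\mat{A}x}\le\norm{x}\,\norm{\vectorif{\dualnorm{\row{\mat{A}}{i}}}{i}}\), as claimed.

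\emph{Inequality~\eqref{eq:matvecop}.} Specialise the previous item to \(\norm{\cdot}=\maxnorm{\cdot}\): since \(\maxnorm{\cdot}\) and \(\onenorm{\cdot}\) are dual, \(\dualnorm{\row{\mat{A}}{i}}=\onenorm{\row{\mat{A}}{i}}=\sum_{j=1}^{\seconddim}|a_{i,j}|\) and \(\maxnorm{\vectorif{\onenorm{\row{\mat{A}}{i}}}{i}}=\max_{i}\sum_{j}|a_{i,j}|\), giving the left-hand bound; alternatively it follows directly from \(|(\mat{A}x)_i|\le\sum_{j}|a_{i,j}|\,|x_j|\le\bigl(\sum_j|a_{i,j}|\bigr)\maxnorm{x}\) after taking the maximum over \(i\). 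Finally each row sum satisfies \(\sum_{j=1}^{\seconddim}|a_{i,j}|\le\seconddim\maxnorm{\mat{A}}\), which yields the right-hand bound \(\maxnorm{\mat{A}x}\le\seconddim\maxnorm{\mat{A}}\maxnorm{x}\). No genuine obstacle arises here beyond the monotonicity remark above.
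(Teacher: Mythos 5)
The paper states \cref{lem:operatornorms} without proof, treating these as classical inequalities, so there is no in-text argument to compare against; I will simply assess your proof, which is correct. Your derivation of the left inequality from the definition of the dual norm, and of the right inequality by applying it row by row and then taking the norm of the resulting coordinatewise bound, is the standard route, and your specialisation to the pair \(\maxnorm{\cdot}/\onenorm{\cdot}\) for \cref{eq:matvecop} is also correct. The one caveat you flag --- that passing from coordinatewise domination \(|(\mat{A}x)_i|\le\dualnorm{\row{\mat{A}}{i}}\norm{x}\) to \(\norm{\mat{A}x}\le\norm{\vectorif{\dualnorm{\row{\mat{A}}{i}}}{i}}\norm{x}\) requires the norm on \(\RR^{\firstdim}\) to be absolute (equivalently, monotone) --- is a genuine hypothesis, not just a pedantic remark: for a non-absolute norm such as \(\norm{(a,b)}=\max(|a|,|b|,|a-b|)\) on the output space, the inequality can fail (e.g.\ \(\mat{A}=\begin{smatrix}1&0\\0&-1\end{smatrix}\), \(x=(1,1)\), \(\twonorm{\cdot}\) on the input gives \(\norm{\mat{A}x}=2\) but \(\norm{(1,1)}\twonorm{x}=\sqrt2\)). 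Since every norm the paper actually uses (\(\onenorm{\cdot}\), \(\twonorm{\cdot}\), \(\maxnorm{\cdot}\), Frobenius) is absolute, this restriction is harmless for the paper's purposes, but making it explicit, as you do, is the right level of care.
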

Given an \textsc{hm}
representation~\(\HMRepresentation{\mat{L}}{\mat{R}}{\mat{P}}\) of a
matrix multiplication tensor decomposition, one can retrieve the
transpose of the multiplication formula~(\ref{eq:mxnTimesnxp})
implemented by~\cref{eq:StrassenMultiplicationAlgorithm} using the
Hadamard product~\(\HadamardProduct{\mat{A}}{\mat{B}}\) of
matrices~\(\mat{A}\) and~\(\mat{B}\) with the following map:
\begin{equation}\label{eq:HMRepresentation2MatrixMultiplicationFormula}
\begin{array}{ccc}
{{\Matrices{\Field}{\matrixsize{\firstdim}{\seconddim}}}\times{\Matrices{\Field}{\matrixsize{\seconddim}{\thirddim}}}}
&\rightarrow&
{\Matrices{\Field}{\matrixsize{\firstdim\thirddim}{1}}},\\
({\mat{A}},{\mat{B}})&\mapsto&\MatrixProduct%
{\Transpose{\mat{P}}}%
{\Bigl(\HadamardProduct%
{\bigl(\MatrixProduct{L}{\vectorization{\mat{A}}}\bigr)}%
{\bigl(\MatrixProduct{R}{\vectorization{\mat{B}}}\bigr)}%
\Bigr).}%
\end{array}
\end{equation}
Hence, we express there a bilinear operator~\({\beta : {\RR^{e}} \times {\RR^{f}} \rightarrow {\RR^{g}}}\) represented by its \textsc{hm} representation~\({\HMRepresentation{\mat{L}}{\mat{R}}{\mat{P}}}\) in~\({\RR^{r\times e}\times \RR^{r\times f}\times \RR^{r\times g}}\) as~\({\beta(u,v)=\sum_{i=1}^r(\row{\mat{L}}{i}\cdot{u})(\row{\mat{R}}{i}\cdot{v})\row{(\Transpose{\mat{P}})}{i}}\).
When this operator encodes an~\(\matrixsize{\firstdim}{\seconddim}\) by~\(\matrixsize{\seconddim}{\thirddim}\) matrix multiplication formula, we will thus denote it by~\(\beta_{\textsc{mm}}\) and we will have~\({e=\firstdim\seconddim,f=\seconddim\thirddim,g=\firstdim\thirddim}\).
We also consider recursive applications of such operators defined as:
\begin{equation}
\beta^{(\ell)}:
\begin{array}[t]{cl}
\RR^{e_0e^\ell}\times\RR^{f_0f^\ell}&\rightarrow\RR^{g_0g^\ell},\\
(u,v) &\mapsto \sum_{i=1}^r \beta^{(\ell-1)}(\row{\mat{L}}{i} \cdot u, \row{\mat{R}}{i}\cdot v) \row{(\Transpose{\mat{P}})}{i}
\end{array}
\end{equation}
and~\({\beta^{0}:\RR^{e_0}\times \RR^{f_0}\rightarrow \RR^{g_0}}\), a bilinear operator which we will assume to be bounded:~\({\bigl\|\beta^{(0)}(u,v)\bigr\| \leq \gamma_{0} \|u\|\|v\|}\) for all~\({(u,v)}\) in~\({\RR^{e_0}\times\RR^{f_0}}\).
For convenience, we will define the dimensions~\(G\) as~\({{{g_{0}}{g^{\ell}}}}\) and~\(K\) as~\({{k_{0}}{k^{\ell}}}\).
Recall that~\(\row{(\Transpose{\mat{P}})}{i}\) is the~\(i\)th column of~\(\mat{P}\) and remark that the expression~\({\row{\mat{L}}{i}\cdot{u}}\) is an abuse of notation for the operation where each coefficient~\(l_{i,j}\) of~\(\mat{L}\) multiplies a block of~\(e_0e^{\ell-1}\) contiguous coefficients of~\(u\), namely:~\({\row{\mat{L}}{i}\cdot{u}=\Transpose{(\row{\mat{L}}{i}\matr{u}{e,e_0e^{\ell-1}})}}\).
We will consider the floating point arithmetic in the standard model of~\cite{Higham:2002}: \(\comp{x}\) denotes the computed value for an expression~\(x\) such that~\({\comp{a \textup{ op } b} = (a \textup{ op } b)(1+\delta)}\) for~\({\textup{op}=+,-,\times,/}\) where~\(\delta\) is the unit round off such that~\({|\delta|\leq \ulp}\), except when~\({a \text{ op }b}\) is~\(0\) where~\(\delta\) is~\(-1\).
\par
We recall in the following Lemma some classical inequalities:
\begin{lemma}[see~{\cite[Eq.~(3.5)]{Dai:2023aa}} and~{\cite[Eq.~(4.4)]{Higham:2002}}]\label{lem:dotprodbound}
For any vectors~\(u\) and~\(v\) in~\(\RR^{\thirddim}\) the following inequalities hold:
\begin{eqnarray}
|\comp{u \cdot v}-u\cdot v| &\leq& \zeronorm{u}\dualnorm{u} \|v\|\ulp+\bbigO{\ulp^2},\\\label{eq:dotprodbound}
\left|\comp{\textstyle\sum_{i=1}^n u_i}-\textstyle\sum_{i=1}^n u_i\right| &\leq& (n-1)\Bigl({\textstyle\sum_{i=1}^{n} |u_i|}\Bigr)\ulp +\bbigO{\ulp^2}.\label{eq:errorForSum}
\end{eqnarray}
\end{lemma}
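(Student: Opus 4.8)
The plan is a routine forward-error analysis in the standard floating-point model recalled above, treating the two inequalities in turn. Both rest on the single elementary fact that a product of~\(m\) factors~\({1+\delta_j}\) with~\({|\delta_j|\leq\ulp}\) can be written as~\({1+\theta_m}\) with~\({|\theta_m|\leq m\ulp+\bbigO{\ulp^2}}\) (indeed~\({m\ulp/(1-m\ulp)}\)), which is itself proved by a short induction on~\(m\).

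For the summation bound~(\ref{eq:errorForSum}), I would evaluate~\({\sum_{i=1}^n u_i}\) by an arbitrary tree of~\({n-1}\) binary additions; running the standard model along that tree, the computed result is~\({\comp{\sum_{i=1}^n u_i}=\sum_{i=1}^n u_i\prod_j(1+\delta_{i,j})}\), where for each~\(i\) the product runs over the additions on the path from the leaf~\(u_i\) to the root, hence over at most~\({n-1}\) factors. Rewriting each such product as~\({1+\theta_i}\) with~\({|\theta_i|\leq(n-1)\ulp+\bbigO{\ulp^2}}\) and subtracting~\({\sum_i u_i}\) gives~\({\bigl|\comp{\sum_i u_i}-\sum_i u_i\bigr|=\bigl|\sum_i u_i\theta_i\bigr|\leq(n-1)\bigl(\sum_i|u_i|\bigr)\ulp+\bbigO{\ulp^2}}\), which is the claim; the bound depends only on the depth of the addition tree being at most~\({n-1}\), not on its shape.

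For the dot-product bound, first note that an index~\(i\) with~\({u_i=0}\) yields~\({u_iv_i=0}\) exactly in the model and that adding an exact zero introduces no rounding, so the computation effectively reduces to forming the~\(\zeronorm{u}\) products~\({\comp{u_iv_i}=u_iv_i(1+\epsilon_i)}\), \({|\epsilon_i|\leq\ulp}\), over the indices with~\({u_i\neq 0}\) and accumulating them with~\({\zeronorm{u}-1}\) additions. Combining the single multiplicative perturbation with the~\({\zeronorm{u}-1}\) additive ones along the summation tree as above, each surviving term~\(u_iv_i\) is multiplied by a product of at most~\(\zeronorm{u}\) factors~\({1+\delta}\), i.e.\ by~\({1+\theta_i}\) with~\({|\theta_i|\leq\zeronorm{u}\ulp+\bbigO{\ulp^2}}\). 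Hence~\({\bigl|\comp{u\cdot v}-u\cdot v\bigr|\leq\zeronorm{u}\bigl(\sum_i|u_i||v_i|\bigr)\ulp+\bbigO{\ulp^2}}\), and it remains to bound~\({\sum_i|u_i||v_i|=|u|\cdot|v|}\). Applying the first inequality of~\cref{lem:operatornorms}, Eq.~(\ref{eq:matvecmaxnorm}), to the entrywise absolute-value vectors gives~\({|u|\cdot|v|\leq\dualnorm{|u|}\,\norm{|v|}=\dualnorm{u}\norm{v}}\), the last equality because the norms in play are absolute (monotone under entrywise~\({|\cdot|}\)), as are all~\(p\)-norms. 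This yields~(\ref{eq:dotprodbound}).

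There is no genuine obstacle here; the only points needing a little care are that the bounds must be made independent of the (unspecified) order of summation — handled by arguing on an arbitrary binary addition tree of depth at most~\({n-1}\) — and the appearance of the Hamming weight~\(\zeronorm{u}\) in place of~\(n\), which comes precisely from discarding the exactly-zero products before accumulating. All cross terms arising in the expansions of the products~\({\prod_j(1+\delta_j)}\) are absorbed into the~\(\bbigO{\ulp^2}\) remainder.
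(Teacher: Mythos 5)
The paper does not prove this lemma; it is stated as a recall, citing~\cite[Eq.~(3.5)]{Dai:2023aa} and~\cite[Eq.~(4.4)]{Higham:2002}, so there is no internal argument to compare against. Your reconstruction is correct and is the standard one: the~\((1+\theta_m)\) bookkeeping along an addition tree of depth at most~\(n-1\) is Higham's argument for~\eqref{eq:errorForSum}, and discarding the exactly-zero products to replace~\(n\) by~\(\zeronorm{u}\), followed by a H\"older step, is the usual derivation of the dot-product bound. Two hypotheses are silently carried and worth stating explicitly. First, the step~\(\sum_i|u_i||v_i|\le\dualnorm{u}\,\norm{v}\) requires~\(\norm{\cdot}\) to be an absolute norm (so that~\(\norm{|v|}=\norm{v}\) and~\(\dualnorm{|u|}=\dualnorm{u}\)); this is true for every~\(p\)-norm, which is all the paper ever instantiates, but the lemma is phrased for an arbitrary norm, so the restriction deserves a mention. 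Second, the claim that zero entries of~\(u\) contribute no rounding is not a worst-case consequence of the stated model, which a priori allows~\(\comp{a+0}=a(1+\delta)\) with~\(\delta\neq 0\); it instead reflects that the implementation skips those terms, as in the~\(Q_0\) accounting of~\cite{BBDLS16}, which is the relevant reading here since the~\(u_i\) are fixed formula coefficients known at design time. Neither remark affects the validity of your argument.
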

We define now the growth factor used in this work.
\begin{definition}\label{def:gf}
The \textit{growth factor}~\(\gamma\) of the formula~\(\HMRepresentation{\mat{L}}{\mat{R}}{\mat{P}}\) computing the bilinear form~\(\beta\) is defined by~\({\displaystyle\max_{j=1\dots g} {\scriptstyle\sum_{i=1}^r} \dualnorm{\row{\mat{L}}{i}} \dualnorm{\row{\mat{R}}{i}} |p_{i,j}|}\).
\end{definition}
\par
The growth factor not only bounds the values of bilinear operators, as show in~\cref{lem:valuebound}, but is also central in analyzing their forward numerical error, which will be the focus of~\cref{th:recbound}.
\begin{lemma}\label{lem:valuebound}
For any~\(u,v\) with adequate dimensions, the following relations hold:
\({{\norm{\beta(u,v)}}\leq{\gamma\norm{u}\norm{v}}}\), \({{\textbignorm{\beta^{(\ell)}(u,v)}}\leq{\gamma_{0}\gamma^{\ell}\norm{u}\norm{v}}}\) and~\({{\textbigmaxnorm{\beta_\textsc{mm}^{(\ell)}(u,v)}}\leq{k_{0}k^{\ell}\textmaxnorm{u}\textmaxnorm{v}}}\).
\end{lemma}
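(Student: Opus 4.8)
I would prove the three inequalities in the order stated, all by unfolding \cref{def:gf} and applying \cref{lem:operatornorms}, with the codomains carrying the max-norm (which is the setting of \cref{def:gf}). For the first, write the $j$th coordinate of $\beta(u,v)$ as $\sum_{i=1}^{r}(\row{\mat{L}}{i}\cdot u)(\row{\mat{R}}{i}\cdot v)\,p_{i,j}$, bound each scalar product by $|\row{\mat{L}}{i}\cdot u|\leq\dualnorm{\row{\mat{L}}{i}}\norm{u}$ and $|\row{\mat{R}}{i}\cdot v|\leq\dualnorm{\row{\mat{R}}{i}}\norm{v}$ using the first inequality of \eqref{eq:matvecmaxnorm}, then sum with the triangle inequality over $i$: the modulus of that coordinate is at most $\bigl(\sum_{i=1}^{r}\dualnorm{\row{\mat{L}}{i}}\dualnorm{\row{\mat{R}}{i}}\,|p_{i,j}|\bigr)\norm{u}\norm{v}$. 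Taking the maximum over $j$ and recognizing \cref{def:gf} gives $\norm{\beta(u,v)}\leq\gamma\norm{u}\norm{v}$.

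For the second inequality I would induct on $\ell$, the case $\ell=0$ being the standing boundedness hypothesis on $\beta^{(0)}$. For the step, the recursive definition shows that the $j$th block of $\beta^{(\ell)}(u,v)$ is $\sum_{i=1}^{r}p_{i,j}\,\beta^{(\ell-1)}(\row{\mat{L}}{i}\cdot u,\row{\mat{R}}{i}\cdot v)$. Using the blockwise reading of $\row{\mat{L}}{i}\cdot u$ recalled above (each $l_{i,j}$ scales one contiguous block of $u$), the second inequality of \eqref{eq:matvecmaxnorm} applied blockwise gives $\norm{\row{\mat{L}}{i}\cdot u}\leq\dualnorm{\row{\mat{L}}{i}}\norm{u}$, and likewise for $\mat{R},v$. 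Combining the triangle inequality over $i$, the induction hypothesis applied to each summand $\beta^{(\ell-1)}(\row{\mat{L}}{i}\cdot u,\row{\mat{R}}{i}\cdot v)$, and \cref{def:gf} to bound $\sum_{i}\dualnorm{\row{\mat{L}}{i}}\dualnorm{\row{\mat{R}}{i}}|p_{i,j}|\leq\gamma$ uniformly in $j$, every block of $\beta^{(\ell)}(u,v)$ has norm at most $\gamma_{0}\gamma^{\ell}\norm{u}\norm{v}$, which is the stated bound.

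For the third inequality the key point is that it is coefficient-free: in exact arithmetic $\beta_{\textsc{mm}}^{(\ell)}$ evaluates to the matrix product it encodes, so $\beta_{\textsc{mm}}^{(\ell)}(\vectorization{\mat{A}},\vectorization{\mat{B}})=\vectorization{\MatrixProduct{A}{B}}$ for $\mat{A},\mat{B}$ of shapes $\matrixsize{\cdot}{K}$ and $\matrixsize{K}{\cdot}$ with $K=k_{0}k^{\ell}$ (a correct rank-$r$ decomposition, composed recursively, stays correct). Hence $\textmaxnorm{\beta_{\textsc{mm}}^{(\ell)}(u,v)}=\textmaxnorm{\MatrixProduct{A}{B}}$, and since each column of $\MatrixProduct{A}{B}$ is $\mat{A}$ times a column of $\mat{B}$, the rightmost inequality of \eqref{eq:matvecop} yields $\textmaxnorm{\MatrixProduct{A}{B}}\leq K\,\textmaxnorm{\mat{A}}\,\textmaxnorm{\mat{B}}=k_{0}k^{\ell}\,\textmaxnorm{u}\,\textmaxnorm{v}$.

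The one step that needs care is the inductive step of the second inequality: one must track the hierarchical block decomposition of $u$, $v$ and $\beta^{(\ell)}(u,v)$ across recursion levels and check that the blockwise use of \cref{lem:operatornorms} is legitimate for the norms in use --- which holds for the max-norm, and more generally for any family of norms stacked compatibly from one level to the next. Everything else is a direct substitution into \cref{def:gf} and \cref{lem:operatornorms}.
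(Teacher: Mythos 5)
Your proof is correct and takes essentially the same route as the paper's: a coordinatewise bound on $\beta(u,v)$ obtained from the dot-product inequality in \cref{lem:operatornorms} together with \cref{def:gf}, an induction on $\ell$ for $\beta^{(\ell)}$, and the observation that $\beta_{\textsc{mm}}^{(\ell)}$ computes an exact matrix product so that the max-norm estimate of \cref{lem:operatornorms} applies. The paper merely compresses the first step into a chain of operator-norm inequalities through the matrix $\Transpose{L}\mat{D}_j\mat{R}$ with $\mat{D}_j=\text{Diag}_i(p_{i,j})$, and leaves the induction and the final application implicit; your version spells these out, including the useful caveat that the blockwise norm comparisons across recursion levels must be checked for the chosen family of norms.
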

\begin{proof}
Let~\(\mat{D_{j}}\) denotes~\({\text{Diag}_{i=1\dots r}(p_{i,j})}\)
and~\(c_j\) be the~\(j\)th coefficient of~\(\beta(u,v)\).
We have that
\({|c_j|\leq\norm{\Transpose{u} \Transpose{L} \mat{D}_j\mat{R}
 \mat{v}}\!\leq\!
\dualnorm{\Transpose{u}\mat{L}\mat{D}_j\mat{R}}\!\norm{v}}\),
so that
\(|c_j|\leq\dualnorm{\Transpose{L}\mat{D}_j\mat{R}}\norm{u}\norm{v}\leq
\dualnorm{\sum_{i=1}^r (\row{\mat{L}}{i}\otimes\row{\mat{R}}{i})p_{i,j}}\norm{u}\norm{v}\)
and
\(|c_j|\leq\left(\sum_{i=1}^r\dualnorm{\row{\mat{L}}{i}}\dualnorm{\row{\mat{R}}{i}}|p_{i,j}|\right)\norm{u}\norm{v}\).
Finally, the last inequality follows from~\eqref{eq:matvecmaxnorm}.
\end{proof}
\begin{theorem}\label{th:recbound}
Given any choice of norm~\(\norm{\cdot}\), if~\(G\) denotes~\({g_0g^\ell}\) and~\(K\) denotes~\({k_0k^\ell}\), the error in computing~\(\beta^{(\ell)}\) is bounded as follows~\({\textbigmaxnorm{\widehat{\beta^{(\ell)}(u,v)}-\beta^{(\ell)}(u,v)}\leq\kappa\norm{u}\norm{v}\ulp+\bbigO{\ulp^2}}\) where either
\begin{equation}\label{eq:mmaccuracy}
\kappa={\left(\frac{K}{k_0}\right)}^{\log_{k}\gamma}\left({k_0}^{\!2} + \frac{{{Q}_0}{k_0}\gamma}{(\gamma-k)}\right)-\frac{{Q_0}{K}\gamma}{(\gamma-k)}
\end{equation}
when~\(\beta^{(\ell)}\) is an~\({{M}\times{K}}\) by~\({{K}\times{N}}\) matrix multiplication, or
\begin{equation}\label{eq:opaccuracy}
\kappa = {\left(\lfrac{G}{g_0}\right)}^{\log_{g}\gamma} \gamma_0\left(1+\bigl({1+\log_g(\lfrac{G}{g_0})}\bigr)Q_0\right),
\end{equation}
otherwise, and~\(Q_0={\max_j \bigl(\zeronorm{\row{(\Transpose{P})}{j}}\!+ \max_i(\zeronorm{\row{\mat{L}}{i}}\!+\zeronorm{\row{\mat{R}}{i}}) \mathbb{1}_{p_{i,j}\neq0}\bigr)}\) as in~\cite[Definition~1]{BBDLS16}.
\end{theorem}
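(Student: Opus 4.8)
The plan is to argue by induction on the recursion depth~\(\ell\), extracting from one level of recursion a first–order recurrence for the constant in front of~\({\norm{u}\norm{v}\ulp}\), and then to solve that recurrence in closed form; the two displayed expressions~\eqref{eq:mmaccuracy} and~\eqref{eq:opaccuracy} will differ only by which value bound of~\cref{lem:valuebound} is fed into the recurrence. Concretely, I would write~\({\mathcal{E}^{(\ell)}(u,v):=\widehat{\beta^{(\ell)}(u,v)}-\beta^{(\ell)}(u,v)}\), let~\(\kappa_\ell\) be the quantity on the right-hand side of~\eqref{eq:opaccuracy} (resp.\ of~\eqref{eq:mmaccuracy} in the matrix-multiplication case), and prove~\({\textbigmaxnorm{\mathcal{E}^{(\ell)}(u,v)}\leq\kappa_\ell\norm{u}\norm{v}\ulp+\bbigO{\ulp^{2}}}\). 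For~\({\ell=0}\) the expression~\eqref{eq:opaccuracy} evaluates to~\({\gamma_0(1+Q_0)}\) and~\eqref{eq:mmaccuracy} to~\({k_0^{2}}\); the former is a valid (indeed loose) bound as soon as~\(\beta^{(0)}\) is evaluated with error at most~\({\gamma_0(1+Q_0)\norm{u}\norm{v}\ulp}\), in particular when it is evaluated exactly, while the latter is the classical forward-error bound for the naive product of an~\(\matrixsize{m_0}{k_0}\) by an~\(\matrixsize{k_0}{n_0}\) matrix, each output entry being a length-\(k_0\) inner product whose error is at most~\(k_0\ulp\) times~\({k_0\textmaxnorm{u}\textmaxnorm{v}}\) up to~\(\bbigO{\ulp^{2}}\).

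For the inductive step I would fix an output coordinate~\(j\) of~\(\beta^{(\ell)}\) and follow the straight-line program induced by~\eqref{eq:HMRepresentation2MatrixMultiplicationFormula}: (i)~form~\({\widehat{\row{\mat{L}}{i}\cdot u}=a_i+\delta u_i}\) with~\({a_i:=\row{\mat{L}}{i}\cdot u}\) and~\({\widehat{\row{\mat{R}}{i}\cdot v}=b_i+\delta v_i}\) with~\({b_i:=\row{\mat{R}}{i}\cdot v}\), for each~\(i\) with~\({p_{i,j}\neq0}\); (ii)~evaluate~\(\widehat{\beta^{(\ell-1)}}\) recursively on these perturbed arguments; (iii)~multiply by~\(p_{i,j}\) and accumulate. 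Applying~\eqref{eq:errorForSum} of~\cref{lem:dotprodbound} blockwise, together with the blockwise version of~\({|x\cdot y|\leq\dualnorm{x}\norm{y}}\) from~\eqref{eq:matvecmaxnorm}, bounds~\({\norm{\delta u_i}\leq\zeronorm{\row{\mat{L}}{i}}\dualnorm{\row{\mat{L}}{i}}\norm{u}\ulp+\bbigO{\ulp^{2}}}\) and symmetrically for~\(\delta v_i\). At step~(ii) I would use bilinearity to split
\[
\widehat{\beta^{(\ell-1)}}(a_i+\delta u_i,\,b_i+\delta v_i)-\beta^{(\ell-1)}(a_i,b_i)=\mathcal{E}^{(\ell-1)}(a_i+\delta u_i,\,b_i+\delta v_i)+\beta^{(\ell-1)}(\delta u_i,b_i)+\beta^{(\ell-1)}(a_i,\delta v_i)+\bbigO{\ulp^{2}},
\]
bounding the last two nontrivial terms with the value bound~\({\norm{\beta^{(\ell-1)}(w,z)}\leq\gamma_0\gamma^{\ell-1}\norm{w}\norm{z}}\) of~\cref{lem:valuebound} and the~\(\mathcal{E}^{(\ell-1)}\) term by the induction hypothesis. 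Step~(iii) adds one more application of~\eqref{eq:errorForSum} over the~\({\zeronorm{\row{(\Transpose{P})}{j}}}\) nonzero summands. Each sum over~\(i\) is then collapsed by the defining inequality of the growth factor,~\({\textstyle\sum_i\dualnorm{\row{\mat{L}}{i}}\dualnorm{\row{\mat{R}}{i}}|p_{i,j}|\leq\gamma}\) of~\cref{def:gf}, so that every non-inductive contribution picks up exactly one extra factor~\(\gamma\) on top of the level-\((\ell-1)\) value bound. Maximising over~\(j\) and recognising the resulting count of nonzero entries as~\({\leq Q_0}\) gives the recurrence~\({\kappa_\ell=\gamma\,\kappa_{\ell-1}+\gamma_0\,\gamma^{\ell}\,Q_0}\).

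The matrix-multiplication case runs identically, except that each intermediate product~\(\beta_{\textsc{mm}}^{(\ell-1)}\) should be bounded in max-norm by~\({k_0 k^{\ell-1}\textmaxnorm{w}\textmaxnorm{z}}\) using the third inequality of~\cref{lem:valuebound} in place of the second; this replaces~\({\gamma_0\gamma^{\ell-1}}\) by~\({k_0 k^{\ell-1}}\) in every non-inductive contribution and yields~\({\kappa_\ell=\gamma\,\kappa_{\ell-1}+k_0\,\gamma\,Q_0\,k^{\ell-1}}\) with~\({\kappa_0=k_0^{2}}\). It then remains only to solve the two recurrences. Unrolling~\({\kappa_\ell=\gamma\kappa_{\ell-1}+\gamma_0\gamma^{\ell}Q_0}\) from~\({\kappa_0=\gamma_0(1+Q_0)}\) gives~\({\kappa_\ell=\gamma^{\ell}\gamma_0\bigl(1+(1+\ell)Q_0\bigr)}\), and substituting~\({\gamma^{\ell}=(\lfrac{G}{g_0})^{\log_g\gamma}}\) and~\({\ell=\log_g(\lfrac{G}{g_0})}\) reproduces~\eqref{eq:opaccuracy}; unrolling~\({\kappa_\ell=\gamma\kappa_{\ell-1}+k_0\gamma Q_0 k^{\ell-1}}\) from~\({\kappa_0=k_0^{2}}\) gives~\({\kappa_\ell=\gamma^{\ell}k_0^{2}+\lfrac{k_0\gamma Q_0}{\gamma-k}\,(\gamma^{\ell}-k^{\ell})}\), and substituting~\({\gamma^{\ell}=(\lfrac{K}{k_0})^{\log_k\gamma}}\) and~\({k^{\ell}=\lfrac{K}{k_0}}\) and regrouping reproduces~\eqref{eq:mmaccuracy}.

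The delicate part is the bookkeeping across steps~(i)–(iii): I would need to check that the numbers of nonzero entries of~\(\row{\mat{L}}{i}\),~\(\row{\mat{R}}{i}\) and~\(\row{(\Transpose{P})}{j}\) combine into exactly the constant~\(Q_0\) of~\cite[Definition~1]{BBDLS16} — absorbing the single rounding from the scaling by~\(p_{i,j}\) and the~\({\zeronorm{\cdot}-1}\) additions inside each sum — and that every product of two first-order error terms is genuinely~\(\bbigO{\ulp^{2}}\). A secondary point is that, for the matrix-multiplication bound, all intermediate quantities must be measured in the max-norm so that the~\({k_0 k^{\ell}}\) value bound applies, which is precisely why~\eqref{eq:mmaccuracy} carries~\(k_0\) rather than~\(\gamma_0\).
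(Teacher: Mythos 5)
Your proposal mirrors the paper's proof almost step for step: the same induction on recursion depth, the same decomposition of the error at one level (summation error via \cref{eq:errorForSum}, perturbation of the linear combinations via \cref{eq:dotprodbound,eq:matvecmaxnorm}, bilinear split of the inner $\beta^{(\ell-1)}$ error, and value bounds from \cref{lem:valuebound} with $\gamma_0\gamma^{\ell-1}$ replaced by $k_0k^{\ell-1}$ in the matrix-product case), the same collapse of the sum over $i$ by the growth-factor inequality, leading to the identical first-order recurrence $t_\ell=\bigl(\Theta_0\Theta^{\ell-1}Q_0+t_{\ell-1}\bigr)\gamma$ with the same two base cases, which you then solve and rewrite in closed form exactly as the paper does. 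No gaps and no meaningful divergence from the paper's argument.
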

\begin{table*}[tbp]
\begin{center}\setlength{\tabcolsep}{4pt}\renewcommand{\arraystretch}{.9}
\begin{tabular}{lccccc|cc|cc|cc|cc@{}}
\toprule
&Formula&Applies to&norm&&&\multicolumn{2}{c}{Winograd}& \multicolumn{2}{c}{Strassen}&\multicolumn{2}{c}{\Cref{eq:powers}}&\multicolumn{2}{c}{\Cref{eq:asopt}}\\
&&&&\(Q\)&\(\gamma\)&\(Q\)&\(\gamma\)&\(Q\)&\(\gamma\)&\(Q\)&\(\gamma\)&\(Q\)&\(\gamma\)\\
\midrule
Brent~\cite{brent:1970a}&\eqref{eq:mmaccuracy}&Strassen only&\(\infty\)&\textsc{na}&\(\gamma_{1,1,\infty}\)&&&3.67&12&&&\\
BL~\cite{bini:1980} DDHK~\cite{demmel:2007a}&\eqref{eq:opaccuracy}&any \textsc{mm} alg.&\(\infty\)&\(Q'_0\)\cref{fn:balancedadd}&\(\gamma_{0,1,\infty}\)\cref{fn:binibug}&9&18&7&12&9.59&40&9.81&98.54\\
Higham~\cite{Higham:2002}&\eqref{eq:mmaccuracy}&S.\ \& W.\ only&\(\infty\)&\textsc{na}&\(\gamma_{1,1,\infty}\)&4.94&18&3.83&12&&&\\
Ballard et al.~\cite{BBDLS16}&\eqref{eq:opaccuracy}&any \textsc{mm} alg.&\(\infty\)&\(Q_0\)&\(\gamma_{1,1,\infty}\)&10&18&8&12&12&13&15&17.48\\
\multirow{2}{*}{Dai, Lim~\cite{Dai:2023aa}}&\multirow{2}{*}{\cite[Th~3.3]{Dai:2023aa}}&\multirow{2}{*}{\({\ell=1}\), any alg.}&\(2\)&\({m+n+r}\) &\(\gamma_{2,1}\)&15&17.86&15&14.83&15&12.21&15&12.07\\
&&&\(\infty\)&\({m+n+r}\)&\(\gamma_{1,\infty,1}\)&15&27&15&20&15&22&15&25.14\\
\multirow{2}{*}{Here}&\multirow{2}{*}{\eqref{eq:mmaccuracy}}&\multirow{2}{*}{any \textsc{mm} alg.}&\(2\)&\(Q_{0}\)&\(\gamma_{2,1,\infty}\)&10&8&8&6.83&12&6.05&15&5.97\\
&&&\(\infty\)&\(Q_{0}\)&\(\gamma_{1,1,\infty}\)&10&18&8&12&12&13&15&17.48\\
Here&\eqref{eq:opaccuracy}&any alg.&\(\infty\)&\(Q_{0}\)&\(\gamma_{1,1,\infty}\)&10&18&8&12&12&13&15&17.48\\
\bottomrule
\end{tabular}
\caption{Comparing accuracy   formulas %
for recursive bilinear matrix multiplication operators in the form of~\cref{th:recbound}.
}\label{tab:bounds:sota}
\end{center}
\end{table*}
\begin{proof}
By induction, we will prove that the bound is of the form
\(%
{{\textbigmaxnorm{\Delta_{\beta^{(\ell)}}}} ={\textbigmaxnorm{\widehat{\beta^{(\ell)}(u,v)} - \beta^{(\ell)}(u,v)}} \leq{t_{\ell} \norm{u}\norm{v}\ulp +\bbigO{\ulp^2}}}\),
clarifying in the process the value for~\(t_{\ell}\).
Consider the block~\(c_j\) of~\({\lfrac{G}{g}=g_{0}g^{\ell-1}}\) consecutive output coefficients:~\({c_j=\sum_{i=1}^r \mat{H}_i p_{i,j}}\), where~\({\mat{H}_i={\beta^{(\ell-1)}(\row{\mat{L}}{i}\cdot{u},\row{\mat{R}}{i}\cdot{v})}}\).%
Consider definitions~\({d_{i,j}={\mat{H}}_{i}p_{i,j}}\) and~\({\Delta_{d_{i,j}}=\comp{d_{i,j}}-d_{i,j}}\).
Then, by~\cref{eq:errorForSum}:
\begin{align}
	\textmaxnorm{\comp{c_j}-c_j}&\leq\maxnorm{\comp{\sum_{i=1}^r\comp{d_{i,j}}}{-}\sum_{i=1}^r \comp{d_{i,j}}}+\maxnorm{\sum_{i=1}^r\comp{d_{i,j}}- \sum_{i=1}^r d_{i,j}},\\
	\omit\rlap{${\leq}\sum_{i=1}^{r} \maxnorm{\comp{{\mat{H}}_{i}p_{i,j}}}\bigl(\zeronorm{\row{(\Transpose{P})}{j}}-1\bigr)\ulp+\sum_{i=1}^r \textmaxnorm{\Delta_{d_{i,j}}} +\bbigO{\ulp^2}.\label{eq:deltaci}$}\\
	\maxnorm{\Delta_{d_{i,j}}}&\leq\maxnorm{\comp{p_{i,j}\comp{\mat{\mat{H}_i}}}-p_{i,j}\comp{\mat{H}_i}}+\maxnorm{p_{i,j}\comp{\mat{H}_i}-p_{i,j}\mat{H}_i},\\
	&\leq |p_{i,j}|\textmaxnorm{\mat{H}_i}\ulp + |p_{i,j}|\textmaxnorm{\Delta_{\mat{H}_i}} +\bbigO{\ulp^2}.\label{eq:deltadij}
\end{align}
	\(\textmaxnorm{\Delta_{\mat{H}_i}}\) is equal to~\({\textbigmaxnorm{\comp{\beta^{(\ell-1)}}\bigl({\comp{\row{\mat{L}}{i}\cdot{u}},\comp{\row{\mat{R}}{i}\cdot{v}}}\bigr)-\beta^{(\ell-1)}(\row{\mat{L}}{i}\cdot{u},\row{\mat{R}}{i}\cdot{v})}}\) by bilinearity of~\(\beta^{(\ell-1)}\) and bounded by:
\begin{equation}\label{eq:deltaQ}
\maxnorm{\Delta_{\beta^{(\ell-1)}}}+ \maxnorm{\beta^{(\ell-1)}(\Delta_L,\row{\mat{R}}{i}\cdot{v})}+\maxnorm{\beta^{(\ell-1)}(\row{\mat{L}}{i}\cdot{u},\Delta_{R})}.
\end{equation}
By~\cref{eq:dotprodbound} and the induction hypothesis we have
\begin{align}
\maxnorm{\Delta_\mat{M}} &\leq  \zeronorm{\row{\mat{M}}{i}}\dualnorm{\row{\mat{M}}{i}}\norm{u}\ulp +\bbigO{\ulp^2}\ \textup{with}\ \mat{M}\in \{\mat{L},\mat{R}\}, \label{eq:deltaM} \\
\maxnorm{\Delta_{\beta^{(\ell-1)}}} &\leq t_{\ell-1} \norm{\row{\mat{L}}{i}\cdot u+\Delta_\mat{L}}\norm{\row{\mat{R}}{i}\cdot v+\Delta_\mat{R}}\ulp +\bbigO{\ulp^2}, \\
  &\leq c_{\ell -1} \dualnorm{\row{\mat{L}}{i}} \norm{u} \dualnorm{\row{\mat{R}}{i}}\norm{v}\ulp + \bbigO{\ulp^2}.\label{eq:betalm1}
\end{align}
By~\cref{lem:valuebound}, the following inequality holds
\begin{equation}\label{eq:betadeltaL}
  \maxnorm{\beta^{(\ell-1)}(\Delta_\mat{L},\row{\mat{R}}{i}\cdot v)}
  \leq \Theta_{0} \Theta^{\ell-1} \textmaxnorm{\Delta_\mat{L}}\dualnorm{\row{\mat{R}}{i}}\norm{v}
\end{equation}
for~\((\Theta,\Theta_0)
=(k,k_0)\) if~\(\beta=\beta_{\textsc{mm}}\) or~\({(\gamma,\gamma_0)}\) otherwise (where
\(\Theta_0=\gamma_0\) comes from the current proof with~\({\ell=1}\) and~\({g_0=1}\)).
Similarly,
\begin{equation}\label{eq:betadeltaR}
\maxnorm{\beta^{(\ell-1)}(\row{\mat{L}}{i}\cdot{u},\Delta_\mat{R})}\leq\Theta_{0}\Theta^{\ell-1}\textmaxnorm{\Delta_\mat{R}}\dualnorm{\row{\mat{L}}{i}}\norm{u}.
\end{equation}
Gathering~\cref{eq:deltaci,eq:deltadij,eq:deltaQ,eq:deltaM,eq:betalm1,eq:betadeltaL,eq:betadeltaR} we deduce that
\begin{multline}
\maxnorm{\comp{c_j}-c_j}\leq\textstyle\sum_{i=1}^{r} \Bigl(\Theta_{0}\Theta^{\ell-1}\bigl(\zeronorm{\row{\mat{L}}{i}}+\zeronorm{\row{\mat{R}}{i}}+\zeronorm{\row{(\Transpose{P})}{j}}\bigr)+t_{\ell-1}\Bigr)\\
\times\dualnorm{\row{\mat{L}}{i}}\dualnorm{\row{\mat{R}}{i}}|p_{i,j}|\norm{u}\norm{v}\ulp+\bbigO{\ulp^{2}},
\end{multline}
and thus that~\({\textmaxnorm{\comp{c_j}-c_j}\leq\bigl({\Theta_{0}\Theta^{\ell-1}Q_{0}+t_{\ell-1}}\bigr)\gamma\norm{u}\norm{v}\ulp+\bbigO{\ulp^2}}\).
As in~\cite{Higham:2002}, we deduce that~\(t_\ell\) must then satisfy:
\begin{equation}
\left\{\begin{array}{llll}
t_{\ell} &=& \bigl({ \Theta_{0} \Theta^{\ell-1} Q_{0}+t_{\ell-1}}\bigr) \gamma &\text{for}\ \ell>0,\\
t_{0}  &=& {k_{0}}^{\!2} & \text{for matrix product,}\\
t_{0}   &=& (1+Q_{0})\gamma_{0} & \text{otherwise}.%
\end{array}\right.
\end{equation}
This recurrence relation solves into~\({t_\ell =  \gamma^{\ell}t_0\!+ Q_0\Theta_0\Theta^\ell\sum_{i=1}^{\ell} {\left(\lfrac{\gamma}{\Theta}\right)}^i}\).
\par
In the case of a matrix multiplication operator,~\(t_\ell\) is equal to:
\begin{equation}
\gamma^{\ell} {k_{0}}^{\!2} + Q_0k_0 \gamma\frac{ \gamma^{\ell}-k^{\ell}}{\gamma-k}
= {\left(\frac{K}{k_0}\right)}^{\log_k \gamma}\left({{k_{0}}^{\!2} + \frac{Q_0\gamma}{\gamma-k} k_0}\right)-\frac{Q_0\gamma}{\gamma-k}K.
\end{equation}
In the general case, the value of~\(t_\ell\) becomes:
\({(1 + (1+\ell)Q_{0})\gamma_0\gamma^{\ell}}\), that is equal to~\({{\left(\lfrac{G}{g_0}\right)}^{\log_{g}\gamma}\gamma_0\Bigl({1+\bigl({1+\log_g(\lfrac{G}{g_0})}\bigr)Q_0}\Bigr)}\).
\end{proof}
\footnotetext[1]{\label{fn:balancedadd}\cite{bini:1980,demmel:2007a} reach an improved value of~\(Q_0\) by assuming all additions are performed following a balanced tree, instead of a worst case estimate as done in all other formulas.}
\footnotetext[2]{\label{fn:binibug}We applied the same~\(\gamma_{0,1,\infty}\) for~\cite{bini:1980}, as it seems to be missing a dependency in the magnitude of the coefficients in~\({\mat{L},\mat{R},\mat{P}}\), which was fixed in~\cite{demmel:2007a}.}

\cref{th:recbound} generalizes or improves on previous similar results in~\cite{brent:1970a,Higham:2002,demmel:2007a,BBDLS16,Dai:2023aa}.
In fact,~\cite{Dai:2023aa} considers a single recursive level without base case;~\cite{brent:1970a,Higham:2002} have tight bounds but only for Strassen and Winograd's algorithms in max-norm;
lastly~\mbox{\cite{demmel:2007a,BBDLS16}} has an additional logarithmic factor likely due to a looser bound on each~\(\textbigmaxnorm{\beta^{(\ell-1)}}\), not exploiting the fact that they are matrix products.
\par
Even though the choice of the max-norm produces the tightest bounds in~\cref{th:recbound}, as in most previous works, the bounds are stated there for any choice of norm, as in~\cite{Dai:2023aa}.
Alternative norms, such as the~\(2\)-norm, may give growth factor expressions more amenable to optimizations, as detailed in \Cref{sec:numericalStabilityMeasure}.
\par
\cref{tab:bounds:sota} compares the various existing bounds on numerical accuracy of matrix multiplication algorithms.
They depend on the following choices made on the norms to define the growth factor~\(\gamma\):
\begin{equation}\begin{split}
	\gamma_{0,1,\infty}
	&= \maxnorm{\vectorif{\onenorm{\vectorif{\zeronorm{\row{\mat{L}}{i};\row{\mat{R}}{i}}|p_{i,k}|}{i}}}{k}} \|\mat{L}\|_\infty\|\mat{R}\|_\infty\|\mat{P}\|_\infty,\\
	&= \left(\max_{k\in\{1\ldots mn\}}\!
	\textstyle\sum_{i=1}^r\!\|\row{\mat{L}}{i}\|_0\|\row{\mat{R}}{i}\|_0|p_{i,k}|\right)\!\|\mat{L}\|_\infty\|\mat{R}\|_\infty\|\mat{P}\|_\infty.
\end{split}\end{equation}
\begin{equation}\label{def:growthfactor}
	\gamma_{2,1}
	=
	\onenorm{\vectorif{\twonorm{\row{\mat{L}}{i};\row{\mat{R}}{i};\row{\mat{P}}{i}}}{i}}
	=
	\textstyle\sum_{i=1}^r{\|\row{\mat{L}}{i}\|}_{2}{\|\row{\mat{R}}{i}\|}_{2}{\|\row{\mat{P}}{i}\|}_{2}.
	\hspace{25pt}
\end{equation}
\begin{equation}\label{eq:gfq1inf}
	\begin{split}\gamma_{q,1,\infty}
		&= \maxnorm{\vectorif{\onenorm{\vectorif{\xnorm{\row{\mat{L}}{i};\row{\mat{R}}{i}}{q}|p_{i,k}|}{i}}}{k}},\\
		&= \max_{k\in\{1\ldots mn\}} \textstyle\sum_{i=1}^r\|\row{\mat{L}}{i}\|_q\|\row{\mat{R}}{i}\|_q|p_{i,k}|\ \textup{with }q\in\{1,2\}.
	\end{split}
\end{equation}
\section{Growth factor along orbits}\label{sec:numericalStabilityMeasure}
In the footstep of~\cite{bini:1980}, we aim to find an alternative~\(\matrixsize{2}{2}\) matrix product tensor decomposition, in the orbit of Strassen's one, with improved accuracy, hence minimizing the growth factor.
The use of the maxnorm induces an expression~\cref{eq:gfq1inf} for~\(\gamma_{1,1,\infty}\) poorly suited for optimizations.
We will instead make two relaxations: first, using the~\(2\)-norm and second, as in~\cite{Dai:2023aa}, bounding~\(\gamma_{2,1,\infty}\) by~\(\gamma_{2,1}\):
\begin{equation}
\max_{k\in\{1\ldots mn\}}\sum_{i=1}^r\twonorm{\row{\mat{L}}{i}}\twonorm{\row{\mat{R}}{i}}|p_{i,k}| \leq \sum_{i=1}^r \twonorm{\row{\mat{L}}{i}}\twonorm{\row{\mat{R}}{i}}\twonorm{\row{\mat{P}}{i}}.
\end{equation}
\Cref{thm:IsotropiesActTransitivelyOnOptimalAlgorithm} shows that all fast~\(\matrixsize{2}{2}\) matrix product algorithms are in the same orbit under isotropies action introduced in~\cref{lem:sandwiching}.
While the tensor rank is invariant under this action, the growth factor is generally not.
As its definition is based on Frobenius norm, some isotropies leave it invariant as stated in the following lemma:
\begin{lemma}\label{lem:actionLeavingGrowthFactorInvariant}
The growth factor~\(\gamma_{2,1}\) is invariant under the action of the semidirect product~${{{\textup{\textsc{so}}^{\pm}({\Field^{n}})}{}^{\times{3}}}\!\rtimes{\mathfrak{S}_{3}}}$ induced by the special orthogonal group and the permutation group~\(\mathfrak{S}_{3}\).
\end{lemma}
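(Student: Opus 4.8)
The plan is to transport the statement to the tensor level, where the three quantities appearing in $\gamma_{2,1}$ are manifestly symmetric in the factors and where orthogonal isotropies act by norm‑preserving sandwichings. First I would record an intrinsic form of the growth factor. By the \textsc{hm}-representation conventions, the rows of $\mat{L}$ and $\mat{R}$ and the columns of $\mat{P}$ are the (row- or column-major) vectorizations of the three components $\LeftTensor{i},\RightTensor{i},\ProductTensor{i}$ of the $i$th triad of the decomposition $\tensor{T}=\sum_{i=1}^{r}\LeftTensor{i}\tensorproduct\RightTensor{i}\tensorproduct\ProductTensor{i}$, and vectorization is a Euclidean isometry, so the three $2$-norms in \eqref{def:growthfactor} equal $\Fnorm{\LeftTensor{i}},\Fnorm{\RightTensor{i}},\Fnorm{\ProductTensor{i}}$. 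Hence
\[
 \gamma_{2,1}=\sum_{i=1}^{r}\Fnorm{\LeftTensor{i}}\,\Fnorm{\RightTensor{i}}\,\Fnorm{\ProductTensor{i}},
\]
which depends only on $\tensor{T}$ and is a \emph{symmetric} function of its three factors; in particular it is insensitive to permuting the three slots and to transposing any factor, since $\Fnorm{\mat{M}}=\Fnorm{\mat{M}^{\intercal}}$.

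Next I would handle the continuous part $\textsc{so}^{\pm}(\Field^{n})^{\times 3}$. Let $\Isotropy{g}=(\mat{U}\times\mat{V}\times\mat{W})$ with $\mat{U},\mat{V},\mat{W}$ orthogonal. By \Cref{lem:sandwiching} and additivity, the $i$th triad of $\IsotropyAction{\Isotropy{g}}{\tensor{T}}$ is $(\mat{U}^{-\intercal}\LeftTensor{i}\mat{V}^{\intercal})\tensorproduct(\mat{V}^{-\intercal}\RightTensor{i}\mat{W}^{\intercal})\tensorproduct(\mat{W}^{-\intercal}\ProductTensor{i}\mat{U}^{\intercal})$. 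Orthogonality gives $\mat{U}^{-\intercal}=\mat{U}$ (and likewise for $\mat{V},\mat{W}$), so each new factor has the shape $\mat{Q}_1\mat{M}\mat{Q}_2^{\intercal}$ with $\mat{Q}_1,\mat{Q}_2$ orthogonal, and
\[
 \Fnorm{\mat{Q}_1\mat{M}\mat{Q}_2^{\intercal}}^{2}=\Trace\bigl(\mat{Q}_2\mat{M}^{\intercal}\mat{Q}_1^{\intercal}\mat{Q}_1\mat{M}\mat{Q}_2^{\intercal}\bigr)=\Trace\bigl(\mat{M}^{\intercal}\mat{M}\bigr)=\Fnorm{\mat{M}}^{2}
\]
by $\mat{Q}_1^{\intercal}\mat{Q}_1=\mat{Q}_2^{\intercal}\mat{Q}_2=\mat{I}$ and cyclicity of the trace. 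Thus every $\Fnorm{\LeftTensor{i}},\Fnorm{\RightTensor{i}},\Fnorm{\ProductTensor{i}}$, hence every summand of $\gamma_{2,1}$, is unchanged, so $\gamma_{2,1}$ is invariant under $\textsc{so}^{\pm}(\Field^{n})^{\times 3}$.

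It then remains to treat $\mathfrak{S}_3$ and to combine. The symmetric group acts on the (square) matrix-multiplication tensor by permuting the three tensor factors, with odd permutations in addition transposing every component, as encoded by the trace identities in \eqref{eq:isotropy} and \Cref{thm:groot}. Applied to $\tensor{T}$, this yields a rank-$r$ decomposition whose $i$th triad is the permutation/transposition of the $i$th triad of $\tensor{T}$; by the first step its growth factor is $\sum_i\Fnorm{\cdot}\Fnorm{\cdot}\Fnorm{\cdot}$ over the same three (possibly transposed) factors, merely reordered, hence equal to $\gamma_{2,1}$. Since the orthogonal tuples together with $\mathfrak{S}_3$ generate the semidirect product $\textsc{so}^{\pm}(\Field^{n})^{\times 3}\rtimes\mathfrak{S}_3$, invariance under the whole group follows.

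The main obstacle I anticipate is purely the bookkeeping of transposes and inverses in \Cref{lem:sandwiching} — equivalently, of the Kronecker factors $\mat{V}^{\intercal}\tensorproduct\mat{U}^{-1}$, $\mat{W}^{\intercal}\tensorproduct\mat{V}^{-1}$, $\mat{U}\tensorproduct\mat{W}^{-\intercal}$ in \eqref{eq:isotropyActionOnHMRepresentation}: one must use orthogonality, through $\mat{U}^{-1}=\mat{U}^{\intercal}$, precisely where it is needed so that each of these sandwiching (resp.\ Kronecker) matrices becomes an isometry, after which the norm preservation is immediate. One should also note that, because transposing swaps the row and column dimensions, the $\mathfrak{S}_3$ part of the argument relies on squareness of the matrix product, which is exactly the setting of the lemma.
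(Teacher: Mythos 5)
Your proposal is correct and follows essentially the same path as the paper's (very terse) proof: rewrite each summand of~$\gamma_{2,1}$ as a product of Frobenius norms of the triad components, observe that orthogonal sandwiching via~\cref{lem:sandwiching} preserves each Frobenius norm, and use the symmetry of the trace identities in~\cref{eq:isotropy} to dispatch the~$\mathfrak{S}_3$ part. You have simply spelled out the bookkeeping (the $\mat{Q}_1\mat{M}\mat{Q}_2^{\intercal}$ cancellation and the transpose/permutation structure of odd permutations) that the paper leaves implicit by citing \cref{def:FrobeniusInnerProduct,def:growthfactor,eq:isotropy,eq:isotropyActionOnHMRepresentation}.
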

\begin{proof}
By~\cref{def:FrobeniusInnerProduct}, Frobenius norms are invariant under orthogonal transformations and so is~\(\gamma_{2,1}\) by \cref{def:growthfactor}.
\Cref{lem:actionLeavingGrowthFactorInvariant} is then derived from \Cref{eq:isotropy,eq:isotropyActionOnHMRepresentation}.
\end{proof}
As it is useless to consider isotropies leaving the growth factor invariant, we limit our search to isotropies of the following form:
\begin{lemma}\label{lem:IwasawaDecomposition}
The action of~\({{({\textup{h}}\times{\textup{p}})}^{\times{3}}}\) determines the growth factor~$\gamma_{2,1}$ for~\({\textup{h}\!=\!\Bigl\lbrace\mat{H}_{\rho}\!=\!\begin{smatrix}\rho&0\\0&\lfrac{1}{\rho}\end{smatrix}\Bigm\vert{{\rho}>{0}}\Bigr\rbrace}\) and~\({\textup{p}\!=\!\Bigl\lbrace\mat{P}_{\xi}=\begin{smatrix}1&\xi\\0&1\end{smatrix}\Bigm\vert{\xi\in\RR}\Bigl\rbrace}\).
\end{lemma}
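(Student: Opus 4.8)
The plan is to reduce Strassen's orbit, as far as the value of \(\gamma_{2,1}\) is concerned, by quotienting out the isotropies that leave \(\gamma_{2,1}\) invariant (those of \cref{lem:actionLeavingGrowthFactorInvariant}), using the \(QR\) / Iwasawa decomposition of \(\textsc{psl}^\pm(\RR^2)\) as the required set of coset representatives. First, by \cref{thm:groot,thm:IsotropiesActTransitivelyOnOptimalAlgorithm}, every fast \(\matrixsize{2}{2}\) matrix product algorithm is \(\Isotropy{g}\diamond\tensor{S}\) for some \(\Isotropy{g}\) in the isotropy group \({\textsc{psl}^\pm(\RR^2)^{\times3}\rtimes\mathfrak{S}_3}\); since \(\gamma_{2,1}\) is \(\mathfrak{S}_3\)-invariant by \cref{lem:actionLeavingGrowthFactorInvariant}, it is enough to treat \({\Isotropy{g}=(\mat{U}\times\mat{V}\times\mat{W})\in\textsc{psl}^\pm(\RR^2)^{\times3}}\). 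I would also record that \(\diamond\) is a left group action: from the componentwise composition law of \cref{lem:sandwiching} and the identity \({(\mat{M}_1\mat{M}_2)^{-\intercal}=\mat{M}_1^{-\intercal}\mat{M}_2^{-\intercal}}\), one gets \({(\Isotropy{g}_1\IsotropyComposition\Isotropy{g}_2)\diamond\tensor{T}=\Isotropy{g}_1\diamond(\Isotropy{g}_2\diamond\tensor{T})}\) on each rank-one summand, hence on \(\tensor{S}\) by additivity.

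Next I would factor each of \(\mat{U},\mat{V},\mat{W}\) through its \(QR\) decomposition with positive-diagonal triangular factor: \({\mat{U}=\mat{Q}_{\mat{U}}\mat{R}_{\mat{U}}}\) with \(\mat{Q}_{\mat{U}}\) orthogonal and \(\mat{R}_{\mat{U}}\) upper triangular with positive diagonal, and likewise for \(\mat{V},\mat{W}\). Since \({\det\mat{Q}_{\mat{U}}=\pm1}\), \({\det\mat{R}_{\mat{U}}>0}\) and \({\det\mat{U}=\pm1}\), it follows that \({\det\mat{R}_{\mat{U}}=1}\), so \({\mat{R}_{\mat{U}}=\begin{smatrix}\rho&\rho\xi\\0&\lfrac{1}{\rho}\end{smatrix}=\mat{H}_{\rho}\mat{P}_{\xi}}\) for uniquely determined \({\rho>0}\) and \({\xi\in\RR}\); that is, \({\mat{R}_{\mat{U}}\in\textup{h}\cdot\textup{p}}\), while \(\mat{Q}_{\mat{U}}\), being orthogonal of determinant \(\pm1\), lies in \(\textsc{so}^\pm(\RR^2)\). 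Using again the composition law of \cref{lem:sandwiching}, this gives \({\Isotropy{g}=\Isotropy{q}\IsotropyComposition\Isotropy{b}}\) with \({\Isotropy{q}=(\mat{Q}_{\mat{U}}\times\mat{Q}_{\mat{V}}\times\mat{Q}_{\mat{W}})\in\textsc{so}^\pm(\RR^2)^{\times3}}\) and \({\Isotropy{b}=(\mat{H}_{\rho_1}\mat{P}_{\xi_1}\times\mat{H}_{\rho_2}\mat{P}_{\xi_2}\times\mat{H}_{\rho_3}\mat{P}_{\xi_3})}\) of the announced form.

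Finally, by the group-action property, \({\Isotropy{g}\diamond\tensor{S}=\Isotropy{q}\diamond(\Isotropy{b}\diamond\tensor{S})}\), and since \({\Isotropy{q}\in\textsc{so}^\pm(\RR^2)^{\times3}}\), \cref{lem:actionLeavingGrowthFactorInvariant} yields \({\gamma_{2,1}(\Isotropy{g}\diamond\tensor{S})=\gamma_{2,1}(\Isotropy{b}\diamond\tensor{S})}\). Hence every value taken by \(\gamma_{2,1}\) on Strassen's orbit is already attained by an isotropy of the form \({(\mat{H}_{\rho_i}\mat{P}_{\xi_i})_{i\in\{1,2,3\}}}\), so that the growth factor depends only on the six parameters \({\rho_1,\xi_1,\dots,\rho_3,\xi_3}\), as asserted. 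The main point that needs care is aligning the side on which the orthogonal factors appear (on the left, here) with the composition convention of \cref{lem:sandwiching}, so that \(\Isotropy{q}\) can be peeled off and absorbed through \cref{lem:actionLeavingGrowthFactorInvariant}, together with the elementary determinant bookkeeping ensuring that the triangular factors actually land in \(\textup{h}\cdot\textup{p}\) and not in a larger solvable subgroup; beyond that, the argument is just the classical \(KAN\) decomposition of \(\textsc{SL}_2(\RR)\) applied to the three slots, extended to determinant \(\pm1\) by allowing reflections in \(\textsc{so}^\pm(\RR^2)\).
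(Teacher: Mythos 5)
Your proof is correct and follows essentially the same route as the paper: decompose each of \(\mat{U},\mat{V},\mat{W}\) via \(QR\)/Iwasawa, peel off the orthogonal factor through \cref{lem:actionLeavingGrowthFactorInvariant}, and observe that what remains lies in \(\textup{h}\cdot\textup{p}\). The only substantive difference is bookkeeping: the paper reduces to determinant~\(1\) by a quick remark that scalar rescaling of any slot leaves \(\gamma_{2,1}\) unchanged (once in \(\mat{U}\), once inverted in \(\Inverse{\mat{U}}\)), whereas you deduce \(\det\mat{R}_{\mat{U}}=1\) directly from \(\det\mat{U}=\pm1\), \(\det\mat{Q}_{\mat{U}}=\pm1\) and the positive-diagonal convention, and you explicitly check that \(\diamond\) is a left action so that \(\Isotropy{q}\) can be factored out; both are minor elaborations of the same argument.
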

\begin{proof}
\cref{eq:isotropyActionOnHMRepresentation} shows that the product of any action, say~\(\mat{U}\), by a non-zero scalar affects the growth factor once in~\(\mat{U}\) and once, inverted, in~\(\Inverse{\mat{U}}\), as norms are absolutely homogeneous.
Thus, it is sufficient to consider matrices with determinant~\(1\).
\cref{lem:actionLeavingGrowthFactorInvariant} states that orthogonal matrices do not have any effect.
From the \textsc{qr} decomposition of any invertible matrices, there remains just the~\(({{\textup{h}}\times{\textup{p}}})\) part of~\({\textsc{psl}^{\pm}\bigl({\RR}^{2}\bigr)}\)'s Iwasawa decomposition in~\cref{thm:IsotropiesActTransitivelyOnOptimalAlgorithm}.
\end{proof}
We should study the action of~\({{({\textup{h}}\times{\textup{p}})}^{\times{3}}}\) on Strassen tensor decomposition in order to find variants with the smaller possible~\(\gamma_{2,1}\).
Unfortunately, a direct definitive result for this question seems to be out of reach, and we present several ersatzes.
First, we perform numerical minimization on~\(\GrowthFactor{\IsotropyAction{\Isotropy{g}}{\tensor{S}}}\) with a completely generic isotropy~\(\Isotropy{g}\) in~\({{\textsc{psl}^{\pm}\bigl({\RR}^{2}\bigr)}{}^{\times{3}}}\) (involving~\(6\) indeterminates by~\cref{lem:IwasawaDecomposition}); this experiment suggests that a suitable isotropy to reach a fast matrix product tensor decomposition with minimal~\(\gamma_{2,1}\) could be of the form~\({(\mat{U}\times\mat{U}\times\mat{U})}\) (involving only~\(2\) indeterminates).
The following proposition states precisely this possibility (its proof is a simple second partial derivative test presented in~\cref{sec:ComputationalProofs}).
\begin{restatable}{propositionS}{BestGrowthFactor}\label{prop:BestGrowthFactor}
Consider the matrices~\({\mat{U}(\rho,\xi)=\MatrixProduct{\mat{H}_{\rho}}{\mat{P}_{\xi}}}\) and the isotropies~\(\Isotropy{g}_{\rho,\xi}\) defined by~\({{\mat{U}(\rho,\xi)}{}^{\times{3}}}\).
The minimal value on the orbit~\(\IsotropyAction{\Isotropy{g}_{\rho,\xi}}{\tensor{S}}\) of the growth factor~\(\gamma_{2,1}\bigl({\IsotropyAction{\Isotropy{g}_{\rho,\xi}}{\tensor{S}}}\bigr)\) is reached at the point~\({{(\rho,\xi)}={\bigl(\sqrt[4]{\lfrac{4}{3}},{-\lfrac{1}{2}}\bigr)}}\) and equal to~\({{\lfrac{4}{\sqrt{2}}+\lfrac{16}{\sqrt{3}}}>{12.06603}}\).
\end{restatable}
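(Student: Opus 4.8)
The plan is to reduce the statement to an explicit minimization of a function of the two scalars $\rho>0$ and $\xi\in\RR$, and then to treat it as a calculus problem. Since $\det\bigl(\mat{H}_{\rho}\mat{P}_{\xi}\bigr)=1$, the matrices $\mat{U}(\rho,\xi)=\mat{H}_{\rho}\mat{P}_{\xi}=\begin{smatrix}\rho&\rho\xi\\ 0&1/\rho\end{smatrix}$ of \cref{lem:IwasawaDecomposition} indeed lie in $\textsc{psl}^{\pm}(\RR^{2})$, so $\Isotropy{g}_{\rho,\xi}$ is a legitimate isotropy. By \cref{lem:sandwiching} applied with $\mat{U}=\mat{V}=\mat{W}=\mat{U}(\rho,\xi)$ (equivalently by \cref{lem:actionOnHMRepresentation}), the $i$-th rank-one summand $\LeftTensor{i}\tensorproduct\RightTensor{i}\tensorproduct\ProductTensor{i}$ of $\tensor{S}$ is mapped to $\bigl(\InvTranspose{U}\LeftTensor{i}\Transpose{U}\bigr)\tensorproduct\bigl(\InvTranspose{U}\RightTensor{i}\Transpose{U}\bigr)\tensorproduct\bigl(\InvTranspose{U}\ProductTensor{i}\Transpose{U}\bigr)$, so by \cref{def:growthfactor},
\[
\gamma_{2,1}\bigl(\IsotropyAction{\Isotropy{g}_{\rho,\xi}}{\tensor{S}}\bigr)=\sum_{i=1}^{7}\Fnorm{\InvTranspose{U}\LeftTensor{i}\Transpose{U}}\,\Fnorm{\InvTranspose{U}\RightTensor{i}\Transpose{U}}\,\Fnorm{\InvTranspose{U}\ProductTensor{i}\Transpose{U}}=:F(\rho,\xi).
\]
Thus the proposition is a statement about this concrete bivariate function.

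First I would compute $F$ in closed form. With $\Transpose{U}=\begin{smatrix}\rho&0\\ \rho\xi&1/\rho\end{smatrix}$ and $\InvTranspose{U}=\begin{smatrix}1/\rho&0\\ -\rho\xi&\rho\end{smatrix}$, each of the twenty-one congruences $\InvTranspose{U}\mat{X}\Transpose{U}$ --- with $\mat{X}$ ranging over the Strassen component matrices of \cref{eq:StrassenTensor}, whose entries lie in $\{0,\pm1\}$ --- is a $\matrixsize{2}{2}$ matrix whose entries are explicit quadratic expressions in $\rho,1/\rho,\xi$; summing the squares of these entries yields each squared Frobenius norm as a short Laurent polynomial in $\rho^{2}$ with coefficients polynomial in $\xi$. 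A useful check along the way is that the identity summand $\LeftTensor{5}\tensorproduct\RightTensor{5}\tensorproduct\ProductTensor{5}=\IdentityMatrix\tensorproduct\IdentityMatrix\tensorproduct\IdentityMatrix$ is fixed, since $\InvTranspose{U}\IdentityMatrix\Transpose{U}=\IdentityMatrix$, so it contributes exactly $(\sqrt{2})^{3}=4/\sqrt{2}$ for all $(\rho,\xi)$ --- which is precisely the first term of the claimed optimum. Hence $F(\rho,\xi)=4/\sqrt{2}$ plus a sum of six square roots of products of such Laurent polynomials.

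Next I would locate the stationary points of $F$ on the open domain $(0,\infty)\times\RR$. Substituting $r=\rho^{2}$ clears the Laurent denominators, and, after clearing the remaining square roots, the system $\partial_{\rho}F=\partial_{\xi}F=0$ becomes a small polynomial system; I would verify by substitution that $(\rho,\xi)=\bigl(\sqrt[4]{4/3},\,-1/2\bigr)$ solves it. At this point one checks that $\rho^{2}=2/\sqrt{3}$ makes each of the six non-identity component matrices have Frobenius norm exactly $2/\sqrt{3}$ (with $\xi=-1/2$ the shear that balances this), so each non-identity triad contributes $(2/\sqrt{3})^{3}=8/(3\sqrt{3})$ and the six together contribute $16/\sqrt{3}$; adding the identity term gives $F\bigl(\sqrt[4]{4/3},-1/2\bigr)=4/\sqrt{2}+16/\sqrt{3}>12.06603$. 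To promote this stationary point to the global minimizer, I would observe that $F$ is coercive: when $\rho\to0^{+}$, $\rho\to\infty$, or $|\xi|\to\infty$, at least one entry of $\mat{U}(\rho,\xi)$ or of its inverse is unbounded, and on a suitably chosen summand (depending on which limit is taken) the corresponding Frobenius-norm product --- hence $F$ --- tends to $+\infty$. Being continuous and coercive on the open domain, $F$ attains a global minimum at an interior stationary point, so it suffices to know that the point above is the only stationary point there, which the polynomial system makes checkable; the second partial derivative test at $\bigl(\sqrt[4]{4/3},-1/2\bigr)$, showing the Hessian of $F$ is positive definite, provides the usual confirmation that this stationary point is a strict local (hence the) minimum.

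The main obstacle is precisely this last, global, step: the second derivative test alone only certifies a local minimum, so the delicate part of the argument is to rule out competing interior stationary points of $F$ (equivalently, to solve the stationarity system completely on $(0,\infty)\times\RR$) and to pair this with the coercivity estimate at the boundary of the domain. Everything else --- the twenty-one congruence computations, the resulting closed form for $F$, and the verification that $\bigl(\sqrt[4]{4/3},-1/2\bigr)$ is stationary with the stated value --- consists of explicit finite calculations.
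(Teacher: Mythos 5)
Your plan follows essentially the same route as the paper's own proof: express the growth factor as an explicit function of $(\rho,\xi)$, verify that $\bigl(\sqrt[4]{4/3},-1/2\bigr)$ is a stationary point, check the Hessian is positive definite, and argue that it is the only critical point to conclude globality. What the paper actually does, and you leave as a ``checkable'' polynomial system, is to compute the closed form $\gamma_{2,1}=2\sqrt{2}+3\,\mathcal{A}(r,x)$ with $\mathcal{A}(r,x)=\bigl((1+x)^2+r\bigr)\bigl((x-1)^2+r\bigr)/(r\sqrt{r})$ in the substituted variables $r=4/\rho^4$, $x=1+2\xi$; this factorized form makes the uniqueness step immediate, since $\partial_x\mathcal{A}=0$ forces $x=0$ or $r=1-x^2$, the latter being excluded because $\partial_r\mathcal{A}$ is then nonzero, after which $\partial_r\mathcal{A}(r,0)=0$ reduces to $(r-3)(r+1)=0$ with only $r=3$ admissible. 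So the step you single out as the main obstacle collapses once the right simplification of the objective is found. Conversely, your coercivity remark (that $\gamma_{2,1}\to\infty$ as $\rho\to 0^+$, $\rho\to\infty$, or $|\xi|\to\infty$) is a genuine addition: the paper passes from ``unique critical point, which is a local minimum'' directly to ``global minimum'', and strictly speaking a boundary estimate of the kind you describe is needed to make that inference airtight on the open domain. Your per-component observation, that at the optimum every one of the six non-identity factors of the transformed decomposition has Frobenius norm exactly $2/\sqrt{3}$, is correct and gives a transparent confirmation of the claimed value $4/\sqrt{2}+16/\sqrt{3}$ that the paper does not spell out.
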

The algorithm corresponding to the point~\({(\rho,\xi)}\) with minimal~\(\gamma_{2,1}\) on this restricted orbit is given in~\cref{eq:asopt}.
We gather in~\cref{tab:frobenius} values for~\(\gamma_{2,1}\) of some matrix product tensor decompositions, together with the result obtained in~\cref{prop:BestGrowthFactor}.
In~\cref{sec:implem}, we compare the implementation of algorithms associated to these tensor decompositions in order to confirm that their numerical accuracy is correlated to their respective~\(\gamma_{2,1}\) growth factor.
\section{Upper and lower bounds}\label{sec:holder}
We explore in this section some bounds on the norm of each component of an \textsc{hm} representation.
By the multiplicativity of $L_{p,q}$ norms (even generalized to negative
H\"older conjugates), this will always give alternative bounds on the error, a priori less accurate, but potentially easier to apprehend.
\begin{lemma}\label{lem:equiv}
For any \textsc{hm} representation~\(\mathcal{H}\), with matrices~\({\mat{L},\mat{R},\mat{P}}\) in~\({\Field^{r{\times}n}}\), let~\({\gamma_{\mathcal{H}}}\) be its~\(\gamma_{2,1}\) growth factor~\({\gamma_{2,1}(\mathcal{H})}\), as in~\cref{def:growthfactor}.
Then for any strictly positive~\(y\) and~\(z\), we have both:
\begin{gather}\label{eq:normsgammaupper}
\gamma_{\mathcal{H}}\leq\xnorm{\mathcal{H}}{2,3}\leq\Fnorm{\mathcal{H}}\quad\quad\text{and}\\
\max\Bigl\lbrace{r^{1+3z}}\xnorm{\mathcal{H}}{2,{-\frac{1}{z}}};\xnorm{\mat{L}}{2,-\frac{1}{y}}\!{\cdot}\xnorm{\mat{R}}{2,-\frac{1}{z}}\!{\cdot}\xnorm{\mat{P}}{2,\frac{1}{1+y+z}}\Bigr\rbrace
\leq\gamma_{\mathcal{H}.}\label{eq:normsgammalower}
\end{gather}
\end{lemma}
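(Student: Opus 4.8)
The plan is to recognise every quantity in~\eqref{eq:normsgammaupper} and~\eqref{eq:normsgammalower} as an $L_{2,q}$-type expression in the row-Euclidean norms of $\mat{L},\mat{R},\mat{P}$, and then to obtain the upper estimate from Hölder's inequality and the lower one from its reverse form for negative Hölder conjugates. Concretely, I would abbreviate $a_i:=\twonorm{\row{\mat{L}}{i}}$, $b_i:=\twonorm{\row{\mat{R}}{i}}$, $c_i:=\twonorm{\row{\mat{P}}{i}}$ for $i\in\{1,\dots,r\}$, so that $\gamma_{\mathcal H}=\sum_{i=1}^r a_ib_ic_i$ by~\cref{def:growthfactor}, while for a real $t\neq 0$ the factor $\xnorm{\mathcal H}{2,t}$ is, by the norm-product convention, $\bigl(\sum_i a_i^t\bigr)^{1/t}\bigl(\sum_i b_i^t\bigr)^{1/t}\bigl(\sum_i c_i^t\bigr)^{1/t}$ and $\Fnorm{\mathcal H}=\xnorm{\mathcal H}{2,2}$. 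I would first dispose of the degenerate case: if some row of $\mat{L},\mat{R}$ or $\mat{P}$ vanishes then, with the conventions $0^{-s}=+\infty$ and $(+\infty)^{-s}=0$ for $s>0$, both left-hand sides of~\eqref{eq:normsgammalower} are $0$ and the bound is trivial; hence I may assume $a_i,b_i,c_i>0$.

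For~\eqref{eq:normsgammaupper} I would apply Hölder's inequality in $\RR^r$ to the vectors $(a_i)_i,(b_i)_i,(c_i)_i$ with the three exponents all equal to $3$ (their reciprocals summing to $1$), which yields precisely $\sum_i a_ib_ic_i\le\bigl(\sum_i a_i^3\bigr)^{1/3}\bigl(\sum_i b_i^3\bigr)^{1/3}\bigl(\sum_i c_i^3\bigr)^{1/3}=\xnorm{\mathcal H}{2,3}$. The remaining inequality $\xnorm{\mathcal H}{2,3}\le\Fnorm{\mathcal H}$ then comes from multiplying the three instances of $\threenorm{v}\le\twonorm{v}$ (monotonicity of $\ell^p$-norms in $p$) applied to $v=(a_i)_i,(b_i)_i,(c_i)_i$.

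For~\eqref{eq:normsgammalower}, fixing $y,z>0$, I would use the reverse Hölder inequality --- valid for tuples of nonnegative reals when exactly one of the exponents is positive (necessarily in $(0,1)$) and the rest negative, and deducible from ordinary Hölder by the standard substitution. Applied to $(a_i)_i,(b_i)_i,(c_i)_i$ with exponents $-\tfrac1y,-\tfrac1z,\tfrac1{1+y+z}$, whose reciprocals $-y,-z,1+y+z$ sum to $1$, it gives $\sum_i a_ib_ic_i\ge\bigl(\sum_i a_i^{-1/y}\bigr)^{-y}\bigl(\sum_i b_i^{-1/z}\bigr)^{-z}\bigl(\sum_i c_i^{1/(1+y+z)}\bigr)^{1+y+z}=\xnorm{\mat{L}}{2,-\frac1y}\cdot\xnorm{\mat{R}}{2,-\frac1z}\cdot\xnorm{\mat{P}}{2,\frac1{1+y+z}}$, the second member of the maximum. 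For the first member I would apply the reverse Hölder inequality to the \emph{four} vectors $(a_i)_i,(b_i)_i,(c_i)_i$ and the all-ones vector of $\RR^r$, with exponents $-\tfrac1z,-\tfrac1z,-\tfrac1z,\tfrac1{1+3z}$ (reciprocals $-z,-z,-z,1+3z$, summing to $1$): the constant vector contributes the factor $\bigl(\sum_i 1\bigr)^{1+3z}=r^{1+3z}$, so $\sum_i a_ib_ic_i\ge r^{1+3z}\bigl(\sum_i a_i^{-1/z}\bigr)^{-z}\bigl(\sum_i b_i^{-1/z}\bigr)^{-z}\bigl(\sum_i c_i^{-1/z}\bigr)^{-z}=r^{1+3z}\xnorm{\mathcal H}{2,-\frac1z}$. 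Taking the larger of the two lower bounds gives~\eqref{eq:normsgammalower}. I expect the only genuine subtlety to be keeping track of the sign pattern required by the reverse Hölder inequality (exactly one positive exponent) and noticing that padding the product by the constant vector $\mathbb{1}$ is precisely what produces the prefactor $r^{1+3z}$; everything else is the multiplicativity of $L_{p,q}$-norms already used in the surrounding text.
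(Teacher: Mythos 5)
Your proposal is correct and follows essentially the same route as the paper: the same abbreviations $a_i,b_i,c_i$ for the row Euclidean norms, Hölder with three equal exponents~$3$ for the first inequality, monotonicity of $\ell^p$-norms for the second, and reverse Hölder (on $a_i,b_i,c_i,\mathbb{1}$ with conjugates $-1/z,-1/z,-1/z,1/(1+3z)$, and on $a_i,b_i,c_i$ with conjugates $-1/y,-1/z,1/(1+y+z)$) for the two lower bounds. The only addition is your explicit treatment of the degenerate case of a vanishing row, which the paper leaves implicit.
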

\begin{proof}
Let~\(a_{i}\) (resp.~\({b_{i},c_{i}}\)) denotes~\(\twonorm{\mat{L}_{i}}\) (resp.~\({\twonorm{\mat{R}_{i}},\twonorm{\mat{P}_{i}}}\)).
The first right-hand side inequality is the classical H{\"o}lder's inequality~\({{\onenorm{\vectorif{a_i{\cdot}b_i{\cdot}c_i}{i}} \leq \threenorm{\vectorif{a_i}{i}} {\cdot} \threenorm{\vectorif{b_i}{i}} {\cdot} \threenorm{\vectorif{c_i}{i}}}=\xnorm{\mathcal{H}}{2,3}}\) on~\({a_i, b_i}\) and~\(c_i\) with the H\"older conjugates~\({\frac{1}{3}+\frac{1}{3}+\frac{1}{3}=1}\).
The second right-hand side inequality is a direct application of the monotonicity of norms.
Then, the left-hand side inequality is obtained by a reverse H{\"o}lder's inequality on the vectors~\({a_i, b_i, c_i}\) and~\(1\), with the H\"older conjugates~\({\frac{1}{-1/z}+\frac{1}{-1/z}+\frac{1}{-1/z}+(1+3z)=1}\).
We have indeed that the~\({(1+3z)}\)-norm~\({\xnorm{\vectorif{1}{i}}{1/(1+{3}{z})}}\) is~\({{\bigl(\sum_{i=1}^r{1}^{1/(1+{3}{z})}\bigr)}{}^{1+{3}{z}}}\).
Combined with the relation
\({\xnorm{\mathcal{H}}{2,-\frac{1}{z}}{=}\xnorm{\vectorif{a_i}{i}}{-\frac{1}{z}}{\!\cdot}\xnorm{\vectorif{b_i}{i}}{-\frac{1}{z}}{\!\cdot}\xnorm{\vectorif{c_i}{i}}{-\frac{1}{z}}}\), this shows that the inequality~\({r^{1+3z}\xnorm{\mathcal{H}}{2,-\frac{1}{z}} {\leq} \onenorm{\vectorif{a_i{\cdot}b_i{\cdot}c_i{\cdot}1}{i}}}\) holds.
Finally, for the other lhs, we use H{\"o}lder's inequality on~\({a_i, b_i}\) and~\(c_i\), now with H\"older conjugates~\({\frac{1}{-1/y}+\frac{1}{-1/z}+\frac{1}{1/(1+y+z)}=1}\).
\end{proof}
\begin{table}[ht]%
\centering\setlength{\tabcolsep}{3pt}\renewcommand{\arraystretch}{1.2}
\begin{tabular}{lrcc}
\toprule
Algorithm&\multicolumn{1}{c}{$\gamma_{2,1}({\mathcal{H}})$}&$\xnorm{\mathcal{H}}{2,3}$&$\Fnorm{\mathcal{H}}$\\
\midrule
Winograd &${7{+}\frac{8}{\sqrt{2}}{+}\frac{9}{\sqrt{3}}}\approx{17.853}$&
$11{+}\frac{8}{\sqrt{2}}{+}\frac{9}{\sqrt{3}}$ & $\sqrt{14}^3$\\
Strassen &$12{+}\frac{4}{\sqrt{2}}\approx{14.828}$&
$2{+}\frac{20}{\sqrt{2}}$ & $\sqrt{12}^3$ \\
Eq.~(\ref{eq:powers}) &  $\frac{75}{8}{+}\frac{4}{\sqrt{2}}\approx{12.203}$& $\frac{125}{32}{+}\frac{4}{\sqrt{2}}{+}\frac{25}{2\sqrt{5}}$   & $\sqrt{\frac{162}{16}}^3$\\
Eq.~(\ref{eq:powrot})  & $\frac{75}{8}{+}\frac{4}{\sqrt{2}}\approx{12.203}$& $\frac{125}{32}{+}\frac{4}{\sqrt{2}}{+}\frac{25}{2\sqrt{5}}$& $\sqrt{10}\sqrt{\frac{162}{16}}\frac{810}{80\sqrt{10}}$ \\
Eq.~(\ref{eq:asopt}) & \color{teal}\bf $\frac{16}{\sqrt{3}}{+}\frac{4}{\sqrt{2}}\approx{12.066}$& \color{teal}\bf $\frac{16}{\sqrt{3}}{+}\frac{4}{\sqrt{2}}$ & $\sqrt{10}^3$ \\
Conv.&  $8.000$& $8$ & $\sqrt{8}^3$ \\
\bottomrule
\end{tabular}
\caption{Illustration of~\cref{eq:normsgammaupper} on several~\(\mathcal{H}=\HMRepresentation{\mat{L}}{\mat{R}}{\mat{P}}\)}\label{tab:frobenius}%
\end{table}
\Cref{tab:frobenius} gives the Frobenius and~\({(2,3)}\)-norms of each of the three matrices defining the \textsc{hm} representation of several matrix product algorithms, as well as their~\(\gamma_{2,1}\) growth factor.
\par
In the following proposition, we show that---up to orthogonal
transformations---the minimum of the Frobenius norm of each of the
three \textsc{hm} representation components defining a
fast~\(\matrixsize{2}{2}\)-matrix multiplication algorithms
is~\(\sqrt{10}\).
\begin{restatable}{propositionS}{tencubetwonorm}\label{prop:tencubetwonorm}
The minimal product~\(\Fnorm{\mathcal{H}}\) of the three Frobenius norms of the \textsc{hm} representation of any bilinear algorithm for matrix multiplication with~\(7\) multiplications, is~\(\sqrt{10}{}^3\).
\end{restatable}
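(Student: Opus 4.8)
The plan is to turn the statement into an explicit six-variable optimization and solve it by geodesic convexity. By \Cref{thm:IsotropiesActTransitivelyOnOptimalAlgorithm}, every \textsc{hm} representation \(\mathcal{H}\) of a \(7\)-multiplication \(\matrixsize{2}{2}\) algorithm is \(\IsotropyAction{\Isotropy{g}}{\HMRepresentation{\mat{L}_{\tensor{S}}}{\mat{R}_{\tensor{S}}}{\mat{P}_{\tensor{S}}}}\) for some \(\Isotropy{g}=(\mat{U}\times\mat{V}\times\mat{W})\) in \({\textsc{psl}^{\pm}(\RR^{2})}^{\times 3}\); the \(\mathfrak{S}_{3}\)-component only permutes (and possibly transposes) the triple \(\mat{L},\mat{R},\mat{P}\), hence leaves \(\Fnorm{\mathcal{H}}=\Fnorm{\mat{L}}\Fnorm{\mat{R}}\Fnorm{\mat{P}}\) unchanged. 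By \Cref{lem:sandwiching} the \(i\)th components become \(\InvTranspose{U}\LeftTensor{i}\Transpose{V}\), \(\InvTranspose{V}\RightTensor{i}\Transpose{W}\) and \(\InvTranspose{W}\ProductTensor{i}\Transpose{U}\), so
\[
\Fnorm{\mathcal{H}}^{2}=\Bigl({\textstyle\sum_{i}}\textbignorm{\InvTranspose{U}\LeftTensor{i}\Transpose{V}}_{F}^{2}\Bigr)\Bigl({\textstyle\sum_{i}}\textbignorm{\InvTranspose{V}\RightTensor{i}\Transpose{W}}_{F}^{2}\Bigr)\Bigl({\textstyle\sum_{i}}\textbignorm{\InvTranspose{W}\ProductTensor{i}\Transpose{U}}_{F}^{2}\Bigr).
\]
Exactly as in the proof of \Cref{lem:IwasawaDecomposition}, the orthogonal isotropies act on \(\Fnorm{\mathcal{H}}\) by right-multiplying \(\mat{L}\) and \(\mat{R}\), and left-multiplying \(\mat{P}\), by orthogonal (Kronecker-product) matrices, which preserves Frobenius norms; so, by the Iwasawa decomposition of \(\textsc{psl}^{\pm}(\RR^{2})\), it suffices to minimize over \(\mat{U}=\mat{H}_{\rho_{1}}\mat{P}_{\xi_{1}}\), \(\mat{V}=\mat{H}_{\rho_{2}}\mat{P}_{\xi_{2}}\), \(\mat{W}=\mat{H}_{\rho_{3}}\mat{P}_{\xi_{3}}\), a six-parameter family.

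Using \(\textnorm{\mat{M}}_{F}^{2}=\Trace(\Transpose{M}\mat{M})\) and cyclicity of the trace, set \(X=\Transpose{U}\mat{U}\), \(Y=\Transpose{V}\mat{V}\), \(Z=\Transpose{W}\mat{W}\), which range over all \(\matrixsize{2}{2}\) symmetric positive definite matrices of determinant \(1\); then the three factors above become \(\langle\Inverse{X},\Psi_{L}(Y)\rangle\), \(\langle\Inverse{Y},\Psi_{R}(Z)\rangle\), \(\langle\Inverse{Z},\Psi_{P}(X)\rangle\), where \(\Psi_{L}(Y)={\sum_{i}}\LeftTensor{i}Y\Transpose{\LeftTensor{i}}\) and \(\Psi_{R},\Psi_{P}\) are the analogous positive linear maps. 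Reading off \Cref{eq:StrassenTensor}, the multisets \(\{\LeftTensor{i}\}_{i}\), \(\{\RightTensor{i}\}_{i}\), \(\{\ProductTensor{i}\}_{i}\) coincide (each equals, up to row signs, \(\{\IdentityMatrix,\,e_{1}e_{1}^{\intercal},\,e_{2}e_{2}^{\intercal},\,e_{1}(e_{1}{+}e_{2})^{\intercal},\,e_{2}(e_{1}{+}e_{2})^{\intercal},\,(e_{1}{-}e_{2})e_{1}^{\intercal},\,(e_{1}{-}e_{2})e_{2}^{\intercal}\}\)), hence \(\Psi_{L}=\Psi_{R}=\Psi_{P}=:\Psi\) and
\[
\Fnorm{\mathcal{H}}^{2}=h(X,Y,Z):=\langle\Inverse{X},\Psi(Y)\rangle\,\langle\Inverse{Y},\Psi(Z)\rangle\,\langle\Inverse{Z},\Psi(X)\rangle,
\]
a cyclically symmetric function on \(\mathbb{H}^{3}\), where \(\mathbb{H}\) is the hyperbolic plane of those positive definite matrices with its affine-invariant metric --- a Hadamard manifold on which \(X\mapsto\Inverse{X}\) is an isometry.

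For the minimization I would show that each factor \((X,Y)\mapsto\langle\Inverse{X},\Psi(Y)\rangle\) is jointly geodesically \emph{log}-convex. For a rank-one \(\LeftTensor{i}=u_{i}v_{i}^{\intercal}\), \(\langle\Inverse{X},\LeftTensor{i}Y\Transpose{\LeftTensor{i}}\rangle=(u_{i}^{\intercal}\Inverse{X}u_{i})(v_{i}^{\intercal}Yv_{i})\); along a geodesic through \(\IdentityMatrix\) one has \(v^{\intercal}Y(t)v=\sum_{k}c_{k}\lambda_{k}^{t}\) with \(c_{k}\geq0\), whose logarithm (a log-sum-exp of affine functions of \(t\)) is convex, so \(\log(v^{\intercal}Yv)\) is geodesically convex and, via the inversion isometry, so is \(\log(u^{\intercal}\Inverse{X}u)\); the lone rank-two summand gives \(\langle\Inverse{X},Y\rangle=\Trace(\Inverse{X}Y)=2\cosh\!\bigl(d_{\mathbb{H}}(X,Y)/\sqrt2\bigr)\), whose logarithm, being a convex increasing function of the geodesically convex distance \(d_{\mathbb{H}}\), is geodesically convex. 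Since a sum of geodesically log-convex functions is geodesically log-convex, \(\log h\) is geodesically convex on \(\mathbb{H}^{3}\); being invariant under the cyclic isometry, it attains its infimum on the totally geodesic diagonal, so \(\min h=\bigl(\min_{X\in\mathbb{H}}\langle\Inverse{X},\Psi(X)\rangle\bigr)^{3}\). The last is a coercive two-variable problem in \((\rho,\xi)\) --- one checks that in every escape direction at least one rank-one term blows up --- solved by a second-derivative test as for \Cref{prop:BestGrowthFactor}: its unique critical point is \((\rho,\xi)=(\sqrt[4]{\lfrac{4}{3}},-\lfrac{1}{2})\), of value \(10\). Therefore \(\Fnorm{\mathcal{H}}\geq\sqrt{10}^{3}\), with equality for \Cref{eq:asopt}, the \(\mat{U}^{\times3}\)-image of Strassen's decomposition for \(\mat{U}=\mat{H}_{\sqrt[4]{4/3}}\mat{P}_{-1/2}\) (see \Cref{tab:frobenius}).

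The step I expect to be the main obstacle is establishing the joint geodesic log-convexity of \((X,Y)\mapsto\langle\Inverse{X},\Psi(Y)\rangle\) --- chiefly the full-rank summand --- together with the (standard, but to be stated carefully) fact that a \(\mathbb{Z}/3\)-invariant geodesically convex function on a Hadamard manifold is minimized on its fixed-point set. If a clean convexity argument resists, the fallback is a computer-algebra certification in the six original variables: clear denominators in the gradient of \(\Fnorm{\mathcal{H}}^{2}\), verify that the resulting polynomial system is zero-dimensional, isolate its real solutions exactly, and check that the smallest critical value is \(10^{3}\), the coercivity estimate ruling out escape of a minimizing sequence to the boundary.
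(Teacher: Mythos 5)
Your proof is correct in spirit and takes a genuinely different route from the paper's. The paper's argument is much more elementary: it observes that, after the Iwasawa reduction, each of the three transformed \textsc{hm} components has the form \(\mat{L}_{\tensor{S}}\cdot(\mat{W}\otimes\mat{V})\) (up to row/column permutations, since the seven row-matrices \(\{\LeftTensor{i}\}\), \(\{\RightTensor{i}\}\), \(\{\ProductTensor{i}\}\) coincide as multisets), computes \(\Fnorm{\mat{L}_{\tensor{S}}(\mat{W}\otimes\mat{V})}^{2}=f(r,x,s,y)\) directly as a four-variable polynomial, shows that its unique critical point (with \(r,s>0\)) has value \(10\) and is a local minimum by the Hessian test, and thus concludes that each of the three factors is at least \(\sqrt{10}\) \emph{separately}, giving the lower bound \(\sqrt{10}^{3}\) on the product; equality is then exhibited by \Cref{eq:asopt}. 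This sidesteps the coupling between the three Kronecker transforms entirely --- a per-factor bound is valid even if they shared parameters, and the simultaneous attainability is checked post hoc. In contrast, you treat the coupled six-parameter objective directly, reformulate it as \(h(X,Y,Z)\) on \(\mathbb{H}^{3}\), prove geodesic log-convexity of each bilinear factor, invoke cyclic symmetry to collapse to the diagonal, and solve a two-variable problem. Your route is more work (the geodesic log-convexity of the full-rank summand via \(\Trace(\Inverse{X}Y)=2\cosh(d_{\mathbb{H}}/\sqrt{2})\), and the statement that a \(\mathbb{Z}/3\)-invariant geodesically convex function is minimized on the fixed locus, both need to be nailed down), but it buys a structural explanation: it shows \emph{why} the minimizer lies on the symmetric \(\mat{U}^{\times 3}\) locus rather than verifying this a posteriori, and it certifies global optimality from convexity rather than from the ``unique critical point + coercivity'' heuristic the paper leans on. Your fallback (isolate the critical points of the six-variable objective by computer algebra) is also distinct from the paper's method. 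In short: the paper decouples and bounds factor by factor; you keep the coupling and exploit convexity and symmetry. Both arrive at the same two (resp.\ four)-variable calculus, and both tacitly rely on coercivity to rule out infima at the boundary of the parameter space --- a point worth writing out explicitly in either version.
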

This proposition's proof is given in~\cref{sec:ComputationalProofs}.
Remark that this lower bound is reached by the algorithm whose \textsc{hm} representation is given in~\Cref{eq:asopt}.
\begin{equation}\label{eq:asopt}%
\begin{smatrix}
\frac{\sqrt{3}}{2}&\frac{1}{2}&\frac{1}{2}&\frac{\sqrt{3}}{6}\\
0&0&1&{-\frac{\sqrt{3}}{3}}\\
0&1&0&\frac{\sqrt{3}}{3}\\
0&0&0&{-}\frac{2}{\sqrt{3}}\\
{-}\frac{\sqrt{3}}{2}&{-}\frac{1}{2}&\frac{1}{2}&{-}\frac{\sqrt{3}}{2}\\
{-}\frac{\sqrt{3}}{2}&{-}\frac{1}{2}&\frac{1}{2}&\frac{\sqrt{3}}{6}\\
{-}\frac{\sqrt{3}}{2}&\frac{1}{2}&\frac{1}{2}&{-}\frac{\sqrt{3}}{6}\\
\end{smatrix};\
\begin{smatrix}
0&\frac{2}{\sqrt{3}}&0&0\\
-1&\frac{\sqrt{3}}{3}&0&0\\
0&\frac{\sqrt{3}}{3}&0&-1\\
\frac{1}{2}&{-}\frac{\sqrt{3}}{6}&\frac{\sqrt{3}}{2}&{-}\frac{1}{2}\\
{-}\frac{1}{2}&\frac{\sqrt{3}}{2}&{-}\frac{\sqrt{3}}{2}&{-}\frac{1}{2}\\
\frac{1}{2}&\frac{\sqrt{3}}{6}&\frac{\sqrt{3}}{2}&\frac{1}{2}\\
\frac{1}{2}&\frac{\sqrt{3}}{6}&{-}\frac{\sqrt{3}}{2}&{-}\frac{1}{2}\\
\end{smatrix};\
\Transpose{\begin{smatrix}
\frac{\sqrt{3}}{6}&\frac{1}{2}&\frac{1}{2}&\frac{\sqrt{3}}{2}\\
{-}\frac{\sqrt{3}}{3}&0&-1&0\\
\frac{\sqrt{3}}{3}&-1&0&0\\
\frac{\sqrt{3}}{6}&{-}\frac{1}{2}&{-}\frac{1}{2}&\frac{\sqrt{3}}{2}\\
\frac{\sqrt{3}}{2}&{-}\frac{1}{2}&\frac{1}{2}&\frac{\sqrt{3}}{2}\\
{-}\frac{\sqrt{3}}{6}&{-}\frac{1}{2}&\frac{1}{2}&\frac{\sqrt{3}}{2}\\
{-}\frac{2}{\sqrt{3}}&0&0&0\\
\end{smatrix}}.%
\end{equation}
\begin{remark}
Similarly,~\({\bigl(\lfrac{\sqrt[4]{3}}{\sqrt{2}},\lfrac{\sqrt[4]{3}}{\sqrt{6}},\lfrac{\sqrt[4]{3}}{\sqrt{2}},{-}\lfrac{\sqrt[4]{3}}{\sqrt{6}}\bigr)}\)
is a minimum
of~\({\xnormexp{\mat{L}\cdot(\mat{W}\otimes\mat{V})}{2,3}{3}}\) as
in~\cref{prop:tencubetwonorm}
for~\({\xnormexp{\mat{L}\cdot(\mat{W}\otimes\mat{V})}{2,2}{2}}\).
It turns out that this value
is~\({\lfrac{16}{\sqrt{3}}+\lfrac{4}{\sqrt{2}}}\), the same as the
\(\gamma_{2,1}\) growth factor at this point, proving that our upper
bound is reached.
\end{remark}
We now turn to potential lower bounds.
\begin{lemma}\label{lem:minimum}
With~\({\mat{W}=\begin{smatrix}r&x\\0&r^{-1}\end{smatrix}}\),~\({\mat{V}=\begin{smatrix}s&y\\0&s^{-1}\end{smatrix}}\),~\(\mat{L}\)
the first component of Strassen's \textsc{hm} representation given
in~\Cref{eq:StrassenHMRepresentation} and any~\({z\geq{0.5171}}\),
the
point~\({\bigl(\lfrac{\sqrt[4]{3}}{\sqrt{2}},\lfrac{\sqrt[4]{3}}{\sqrt{6}},\lfrac{\sqrt[4]{3}}{\sqrt{2}},{-}\lfrac{\sqrt[4]{3}}{\sqrt{6}}\bigr)}\)
is a local minimum
of~\({\xnorm{\mat{L}\cdot(\mat{W}\otimes\mat{V})}{2,-\lfrac{1}{z}}}\)
as a function of~\({r, x, s}\) and~\(y\).
\end{lemma}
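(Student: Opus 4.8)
The plan is to run the second partial derivative test on $F_{z}(r,x,s,y):=\xnorm{\mat{L}\cdot(\mat{W}\otimes\mat{V})}{2,-1/z}$ at the point $P^{\star}:=\bigl(\lfrac{\sqrt[4]{3}}{\sqrt{2}},\lfrac{\sqrt[4]{3}}{\sqrt{6}},\lfrac{\sqrt[4]{3}}{\sqrt{2}},-\lfrac{\sqrt[4]{3}}{\sqrt{6}}\bigr)$. I would write $\ell_{i}=\ell_{i}(r,x,s,y)$ for the Euclidean norm of the $i$th row of $\mat{L}\cdot(\mat{W}\otimes\mat{V})$ and set $m_{i}:=\ell_{i}^{2}$; multiplying Strassen's $\mat{L}$ from~\eqref{eq:StrassenHMRepresentation} against $\mat{W}\otimes\mat{V}=\begin{smatrix}rs&ry&xs&xy\\0&r/s&0&x/s\\0&0&s/r&y/r\\0&0&0&1/(rs)\end{smatrix}$ makes each $m_{i}$ an explicit rational function of $r,x,s,y$ and their inverses. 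Since $F_{z}=\Phi^{-z}$ with $\Phi:=\sum_{i=1}^{7}m_{i}^{-1/(2z)}>0$ and $t\mapsto t^{-z}$ is strictly decreasing for $z>0$, the statement is equivalent to: $\Phi$ has a strict local \emph{maximum} at $P^{\star}$, i.e.\ $\nabla\Phi(P^{\star})=0$ and $\operatorname{Hess}\Phi(P^{\star})\prec 0$, for every $z\ge 0.5171$.

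For the first-order condition, I would evaluate at $P^{\star}$, where the identities $rx=\tfrac12=-sy$ hold and simplify everything, obtaining $m_{1}(P^{\star})=2$ and $m_{2}(P^{\star})=\dots=m_{7}(P^{\star})=\tfrac43$ (sanity checks: $\sum_{i}m_{i}(P^{\star})=10$ and $\sum_{i}m_{i}(P^{\star})^{3/2}=\tfrac{16}{\sqrt3}+\tfrac{4}{\sqrt2}$, in agreement with \cref{tab:frobenius}). Then $\nabla\Phi(P^{\star})=-\tfrac1{2z}\bigl(2^{-1/(2z)-1}\nabla m_{1}(P^{\star})+(\tfrac43)^{-1/(2z)-1}\sum_{i\ge 2}\nabla m_{i}(P^{\star})\bigr)$, and since $2^{-1/(2z)-1}$ and $(\tfrac43)^{-1/(2z)-1}$ are linearly independent functions of $z$, this vanishes on a range of $z$ exactly when both $\nabla m_{1}(P^{\star})=0$ and $\sum_{i\ge 2}\nabla m_{i}(P^{\star})=0$. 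The first is a direct four–partial-derivative computation on $m_{1}=(rs)^{2}+(ry)^{2}+(xs)^{2}+(xy+1/(rs))^{2}$. The second is, given the first, equivalent to $\nabla\bigl(\sum_{i}m_{i}\bigr)(P^{\star})=0$, i.e.\ to $P^{\star}$ being a critical point of $\xnormexp{\mat{L}\cdot(\mat{W}\otimes\mat{V})}{2,2}{2}=\sum_{i}m_{i}$, which is already established by the proof of \cref{prop:tencubetwonorm} and the remark following it (alternatively via the closed form $\sum_{i}m_{i}=\Trace\bigl(\Transpose{L}\mat{L}\,(\mat{W}\Transpose{W}\otimes\mat{V}\Transpose{V})\bigr)$, which depends on $\mat{W}\Transpose{W}$ and $\mat{V}\Transpose{V}$ only).

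For the second-order condition, differentiating once more and using $\nabla m_{1}(P^{\star})=0$ collapses the Hessian to $\operatorname{Hess}\Phi(P^{\star})=-w\,M(w)$, where $w:=\tfrac1{2z}\in\bigl(0,\tfrac1{2\cdot 0.5171}\bigr]$ and
\[
M(w):=2^{-w-1}\mathbf{H}_{1}+\bigl(\tfrac43\bigr)^{-w-1}\mathbf{H}_{\Sigma}-(w+1)\bigl(\tfrac43\bigr)^{-w-2}\mathbf{G},
\]
with $\mathbf{H}_{1}:=\operatorname{Hess}m_{1}(P^{\star})$, $\mathbf{H}_{\Sigma}:=\sum_{i\ge 2}\operatorname{Hess}m_{i}(P^{\star})$ and $\mathbf{G}:=\sum_{i\ge 2}\bigl(\nabla m_{i}(P^{\star})\bigr)\bigl(\nabla m_{i}(P^{\star})\bigr)^{\intercal}\succeq 0$; all three are explicit $4\times4$ symmetric matrices with entries in $\mathbb{Q}(\sqrt3)$. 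Since $w>0$, $\operatorname{Hess}\Phi(P^{\star})\prec 0$ iff $M(w)\succ 0$, so the claim reduces to the one-variable statement that the four leading principal minors of $M(w)$ are strictly positive on $\bigl(0,\tfrac1{2\cdot 0.5171}\bigr]$; each such minor is an analytic function of $w$, explicitly a polynomial in $w,2^{-w},(\tfrac43)^{-w}$ with coefficients in $\mathbb{Q}(\sqrt3)$.

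The hard part will be exactly this last step: certifying $M(w)\succ 0$ \emph{uniformly} over the semi-infinite range $z\ge 0.5171$, i.e.\ $w\in\bigl(0,\tfrac1{2\cdot0.5171}\bigr]$. At $w=0$ (the $z\to\infty$, geometric-mean, limit) one has $M(0)=\operatorname{Hess}\bigl(\sum_{i}\log m_{i}\bigr)(P^{\star})=\tfrac12\mathbf{H}_{1}+\tfrac34\mathbf{H}_{\Sigma}-\tfrac9{16}\mathbf{G}$, which is positive definite, so $M(w)\succ 0$ near $0$; as $w$ increases the coefficient $(w+1)(\tfrac43)^{-w-2}$ of the subtracted semidefinite term grows and $M(w)$ eventually ceases to be positive definite, and $0.5171$ is a rational lower bound for the value of $z$ at which its smallest leading principal minor first vanishes — so the hypothesis $z\ge 0.5171$ is essentially sharp. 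Establishing this therefore requires either a monotonicity analysis of that minor locating its root, or a rigorous numerical enclosure; the remaining items (the explicit $m_{i}$, the vanishing of $\nabla m_{1}(P^{\star})$, the matrices $\mathbf{H}_{1},\mathbf{H}_{\Sigma},\mathbf{G}$, and the other three minors) are finite, computer-algebra-friendly verifications, in the spirit of the proof of \cref{prop:BestGrowthFactor}.
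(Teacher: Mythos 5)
Your reduction of the problem is correct and somewhat more structured than the paper's: writing $F_z=\Phi^{-z}$ with $\Phi=\sum_{i}m_i^{-1/(2z)}$, noting that a local minimum of $F_z$ is a local maximum of $\Phi$, computing $m_1(P^\star)=2$ and $m_i(P^\star)=4/3$ for $i\geq 2$ (consistent with the $\gamma_{2,1}$ and Frobenius values in \cref{tab:frobenius}), verifying $\nabla m_1(P^\star)=0$ directly, and deducing $\sum_{i\geq 2}\nabla m_i(P^\star)=0$ from the critical-point computation already done in the proof of \cref{prop:tencubetwonorm}. The collapse of the Hessian into the form $\operatorname{Hess}\Phi(P^\star)=-w\,M(w)$ with $w=\lfrac{1}{2z}$ is also correct, as is the observation that $\operatorname{Hess}\Phi(P^\star)\prec0$ is equivalent to $M(w)\succ 0$ via Sylvester's criterion.

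However, you explicitly stop short of the step that actually proves the lemma. You write that certifying $M(w)\succ0$ uniformly over $w\in\bigl(0,\lfrac{1}{2\cdot 0.5171}\bigr]$ ``requires either a monotonicity analysis of that minor locating its root, or a rigorous numerical enclosure'', and you also assert without verification that $M(0)\succ0$ and that the threshold $z\geq 0.5171$ corresponds to a root of a leading principal minor of $M(w)$. Those are not small cosmetic items: they \emph{are} the content of the lemma, and nothing in the proposal establishes them. The paper closes exactly this gap by a direct computation that never passes through $\Phi$: it takes the explicit four-variable function $f_z(r,x,s,y)={\xnorm{\mat{L}\cdot(\mat{W}\otimes\mat{V})}{2,-1/z}}$, verifies by inspection that its gradient vanishes at $P^\star$, and then computes the characteristic polynomial of the Hessian at $P^\star$, which factors as a product of two quadratics with roots $\frac{n}{6z}\bigl(b_1\pm\sqrt{\delta_1}\bigr)$ and $\frac{n}{18z}\bigl(b_2\pm\sqrt{\delta_2}\bigr)$ in terms of $\tau=3^{1/(2z)}$ and $\lambda=2^{1/(2z)}$. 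It then checks: (i) $\delta_1,\delta_2>0$ for $z>0$, so the eigenvalues are real; (ii) the two quantities $b_i^2-\delta_i$ share a common root lying strictly below $0.5171$; hence (iii) all four eigenvalues are strictly positive for $z\geq 0.5171$. That explicit eigenvalue computation and root localization is the missing ingredient in your proposal; without it, your argument concludes only that \emph{some} threshold exists, not that $0.5171$ suffices.

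One further caution about your first-order reasoning: you justify $\nabla\Phi(P^\star)=0$ by invoking linear independence of $2^{-1/(2z)-1}$ and $(4/3)^{-1/(2z)-1}$ as functions of $z$, concluding that vanishing ``on a range of $z$'' forces both blocks of the gradient to vanish. That phrasing inverts the logical direction the lemma requires: the lemma is a statement for each fixed $z\geq 0.5171$, so the correct route (which you also sketch) is simply to verify $\nabla m_1(P^\star)=0$ and $\sum_{i\geq 2}\nabla m_i(P^\star)=0$ directly, whence $\nabla\Phi(P^\star)=0$ for every $z$; the linear-independence argument is unnecessary and, as stated, proves the wrong implication.
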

\begin{proof}
As in the proof of~\cref{prop:tencubetwonorm}, we give an explicit expression~\(f_z(r,x,s,y)\) of~\({\xnorm{\mat{L}\cdot(\mat{W}\otimes\mat{V})}{2,-\lfrac{1}{z}}}\)
equal to:
\begin{equation}
{\left(\!\!
\begin{array}{r}
{\bigl((r^2+x^2)(s^2+y^2)+\lfrac{(2xy+\lfrac{1}{rs})}{rs}\bigr)}^{-\lfrac{1}{2z}}+{(rs)}^{\lfrac{1}{z}}\\
+{\bigl(s^2+{(y+\lfrac{1}{s})}^2\bigr)}^{-\lfrac{1}{2z}}\bigl({{(r^2+x^2)}^{-\lfrac{1}{2z}}+{r}^{\lfrac{1}{z}}}\bigr)\\
+{\bigl(r^2+{(x-\lfrac{1}{r})}^2\bigr)}^{-\lfrac{1}{2z}}\bigl({(s^2+y^2)}^{-\lfrac{1}{2z}}+{s}^{\lfrac{1}{z}}\bigr)\\
+{\Bigl(\bigl(r^2+x^2\bigr)\bigl(s^2+y^2\bigr)\Bigr)}^{-\lfrac{1}{2z}}
\end{array}
\right)}^{-z}.
\end{equation}
Then the evaluation of its partial derivatives at the given point is zero, by inspection, for any real~\(z\).
Now, the roots of the characteristic polynomial of the Hessian
of~\(f_z\) at this point
are~\({\frac{n}{6z}\bigl(b_1\pm\sqrt{\delta_1}\bigr)}\)
and~\({\frac{n}{18z}\bigl(b_2\pm\sqrt{\delta_2}\bigr)}\),
with~\({\tau{=}\sqrt[2z]{3}}\) and~\({\lambda{=}\sqrt[2z]{2}}\),
for~\({n{=}{\bigl(\lambda+6\tau\bigr)}^{-1-z}}\), using
\({b_1{=}{{({32z}-{11})}\tau^{1+z}+4z\lambda\sqrt{3}}}\), and
\({b_2{=}{{({{96z}-{63}})\tau^{1+z}}+{52z\lambda\sqrt{3}}}}\),
\(\delta_1{=}24(1-16z)z\tau\lambda+(1344z^2-384z+39)\tau^2+48z^2\lambda^2\)
and finally
\({\delta_2{=}72(272z-237)z\tau\lambda+(12096z^2-20736z+8991)\tau^2+8112z^2\lambda^2}\).

First,~\(\delta_1\) and~\(\delta_2\) are positive for positive $z$, so
that the eigenvalues are then always real.
Second, both expressions~\({{b_{i}}^{\!2}-\delta_{i}}\) have the same
root, strictly less than~\(0.5171\).
Third, all four eigenvalues are thus strictly positive for~\({z\geq{0.5171}}\).
\end{proof}
\begin{corollary}\label{cor:lowerbound}
\({11.7554696<\frac{28}{9}2^{\frac{11}{14}}3^{\frac{5}{7}}}\) is a lower bound for the~\(\gamma_{2,1}\) growth factor of an \textsc{hm} formula using~\(7\) products.
\end{corollary}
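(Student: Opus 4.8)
The plan is to feed the critical configuration of~\cref{lem:minimum} into the reverse-H\"older lower estimate~\eqref{eq:normsgammalower} of~\cref{lem:equiv}, for a concrete admissible choice of the negative H\"older conjugates. First I would recall that, by~\cref{thm:IsotropiesActTransitivelyOnOptimalAlgorithm} together with~\cref{lem:actionLeavingGrowthFactorInvariant,lem:IwasawaDecomposition}, every \textsc{hm} formula with seven products has the same $\gamma_{2,1}$ as one whose three components are Strassen's multiplied by triangular factors $\mathbf{W}\otimes\mathbf{V}$ as in~\cref{lem:minimum} (and their $\mathfrak{S}_3$-conjugates for the $\mathbf{R}$- and $\mathbf{P}$-slots). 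This reduces the question to a finite-dimensional one and makes the matrix-multiplication symmetry $\mathfrak{S}_3$ available to treat the three slots on the same footing.

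Second, I would evaluate the three factors occurring in~\eqref{eq:normsgammalower} at the point $\bigl(\lfrac{\sqrt[4]{3}}{\sqrt{2}},\lfrac{\sqrt[4]{3}}{\sqrt{6}},\lfrac{\sqrt[4]{3}}{\sqrt{2}},-\lfrac{\sqrt[4]{3}}{\sqrt{6}}\bigr)$ of~\cref{lem:minimum}. As in the computation carried out inside that lemma's proof, the seven squared row norms of the $\mathbf{L}$-component collapse there to the multiset $\{2\}\cup\{\lfrac{4}{3}\}^{6}$, and by $\mathfrak{S}_3$-symmetry the same multiset governs the $\mathbf{R}$- and $\mathbf{P}$-components; hence for any exponent $q$ each factor equals the one-variable power sum $\bigl(2^{q/2}+6\,(\lfrac{4}{3})^{q/2}\bigr)^{1/q}$ at that point. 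Taking a suitable admissible pair of H\"older conjugates involving the rational exponent $\lfrac{11}{14}$ (which is $\ge 0.5171$, so that~\cref{lem:minimum} applies to the two negative-exponent factors), substituting, and collecting powers of $2$ and $3$ via $\lfrac{4}{3}=2^{2}3^{-1}$, the resulting bound simplifies to the closed form $\lfrac{28}{9}\,2^{11/14}\,3^{5/7}$; a numerical estimate of this quantity gives $\lfrac{28}{9}\,2^{11/14}\,3^{5/7}>11.7554696$, which is exactly the claimed corollary.

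The step that needs care is that~\cref{lem:minimum} only certifies a \emph{local} minimum (positivity of the Hessian of $\xnorm{\mathbf{L}\cdot(\mathbf{W}\otimes\mathbf{V})}{2,-1/z}$ for $z\ge 0.5171$), while the plain reverse-H\"older quantity, viewed as a function on the whole orbit, is not by itself bounded away from zero: degenerating a triangular factor shrinks some row of one \textsc{hm} component toward zero. The rigorous content therefore has to come either from the fact that $\gamma_{2,1}$ is coercive on the orbit, hence attains its minimum at an interior critical point, combined with the identification of that point with the diagonal isotropy of~\cref{prop:BestGrowthFactor} and an evaluation of~\eqref{eq:normsgammalower} there, or from a direct boundary analysis showing that the coupling of the three slots — the same matrices $\mathbf{U},\mathbf{V},\mathbf{W}$ appear in all of them, so a degeneration shrinking a row of $\mathbf{L}$ inflates a row of $\mathbf{P}$, whose exponent in~\eqref{eq:normsgammalower} is positive — prevents the product from dropping below the critical value. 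This trade-off, rather than the interior evaluation, is where I expect the main obstacle to be.
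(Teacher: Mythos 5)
Your overall strategy matches the paper's: feed the critical point of \cref{lem:minimum} into the reverse-H\"older bound~\eqref{eq:normsgammalower} of~\cref{lem:equiv}. Your description of the local structure is also right; at the point~\(\bigl(\lfrac{\sqrt[4]{3}}{\sqrt{2}},\lfrac{\sqrt[4]{3}}{\sqrt{6}},\lfrac{\sqrt[4]{3}}{\sqrt{2}},{-}\lfrac{\sqrt[4]{3}}{\sqrt{6}}\bigr)\) the seven squared row \(2\)-norms do collapse to~\(\{2\}\cup\{\lfrac{4}{3}\}^{6}\), so that~\(f_z\) there equals \(\zeta_z={\bigl(2^{-1/(2z)}+6\cdot3^{1/(2z)}2^{-1/z}\bigr)}^{-z}\), which is exactly the quantity the paper manipulates.

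However, the middle step of your plan does not work as stated. The closed form \(\lfrac{28}{9}\,2^{11/14}\,3^{5/7}\) is \emph{not} obtained by substituting any one admissible rational H\"older conjugate such as~\(z=\lfrac{11}{14}\) (or~\(-\lfrac{1}{z}=-\lfrac{11}{14}\)) into \(7^{1+3z}\zeta_z^3\). It is the \emph{limit} of \(7^{1+3z}\zeta_z^3\) as \(z\to\infty\): expanding \(\ln\zeta_z\) around \(u=\lfrac{1}{z}=0\) gives \(\ln\zeta_z = -z\ln7-\lfrac{(3\ln3-\frac{13}{2}\ln2)}{7}+\LO{1}\), so that \(7^{1+3z}\zeta_z^3\to 7\cdot2^{39/14}\cdot3^{-9/7}=\lfrac{28}{9}\,2^{11/14}\,3^{5/7}\). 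At any finite admissible \(z\) the quantity \(7^{1+3z}\zeta_z^3\) is strictly smaller (e.g.\ about~\(11.68\) at \(z=\lfrac{11}{14}\), about~\(11.61\) at \(z=\lfrac{14}{11}\)), so your "substitute and collect powers of \(2\) and~\(3\)" step cannot produce the claimed constant. The rational exponents \(\lfrac{11}{14}\) and \(\lfrac{5}{7}\) live in the coefficients of the first-order Taylor term, not in the choice of H\"older conjugate; you must take the supremum over \(z\), which is a limit. The same limit is taken by the paper for the second, unbalanced version~\(\zeta_z^2\zeta_{-1-2z}\) of the lower bound.

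On the local-versus-global issue you flag: this is a legitimate observation about the argument as a whole, and one the paper also glosses over ("Following the proof of \cref{lem:minimum}" is cited to justify \(\zeta_z^3\leq\xnorm{\mathcal{H}}{2,-1/z}\), although \cref{lem:minimum} establishes only a local minimum and the \((2,-\lfrac{1}{z})\)-functional is not coercive on the open orbit). But your first proposed remedy does not supply the missing piece: if coercivity of \(\gamma_{2,1}\) plus uniqueness of the critical point over the full six-parameter orbit were known, the conclusion would be the sharper bound \(\lfrac{16}{\sqrt{3}}+\lfrac{4}{\sqrt{2}}\approx 12.066\) and the corollary would be moot. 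The very purpose of this \(11.755\) lower bound is that \cref{prop:BestGrowthFactor} is established only on the restricted two-parameter diagonal orbit, so the genuine global minimum of \(\gamma_{2,1}\) is not identified; your second remedy (exploiting the coupling between the three slots, so that a degenerating row in one slot inflates a row in another) is the right direction, and is the version of the argument that \eqref{eq:normsgammalower}'s mixed-sign exponents \(\bigl(-\lfrac{1}{y},-\lfrac{1}{z},\lfrac{1}{1+y+z}\bigr)\) are set up to exploit.
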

\begin{proof}
Following the proof of~\cref{lem:minimum}, we have that equality~\({f_z\!\left(\frac{\sqrt[4]{3}}{\sqrt{2}},\frac{\sqrt[4]{3}}{\sqrt{6}},\frac{\sqrt[4]{3}}{\sqrt{2}},\frac{-\sqrt[4]{3}}{\sqrt{6}}\right)=\!{\left(2^{\frac{-1}{2z}} + 3^{\frac{1}{2z}}2^{\frac{-1}{z}}6\right)}^{-z}}\) holds.
Denoting this quantity by~\(\zeta_{z}\)  we thus have that~\({7^{1+3z}\zeta_z^3}\leq{7^{1+3z}\xnorm{\mathcal{H}}{2,-\lfrac{1}{z}}}\), which left-hand side limit at~\({z=\infty}\) is~\({\frac{28}{9}2^{\frac{11}{14}}3^{\frac{5}{7}}}\).
By~\cref{eq:normsgammalower}, this shows that this value is less than or equal to~\(\gamma_{2,1}\) as announced.
It is also the limit of~\({{{\zeta_{z}}^{\!2}\zeta_{{-1}-2z}}\leq{\xnorm{\mat{L}}{2,-\frac{1}{z}}{\cdot}\xnorm{\mat{R}}{2,-\frac{1}{z}}{\cdot}\xnorm{\mat{P}}{2,\frac{1}{1+z+z}}}}\) at~\({z=\infty}\).
\end{proof}
\Cref{cor:lowerbound} for instance shows that the~\(\gamma_{2,1}\) growth factor of the conventional algorithm ($8$) can not be attained by such fast algorithms.
Let us see now how this bound behaves in our experiments.
\section{Algorithms into practice}\label{sec:implem}
In this section, we present several techniques to lower the number of operations used in our algorithms and thus, lower complexity bounds and potentially obtain a better accuracy.
\par
Determining actual complexity bounds requires estimating the number of operations required to implement a given formula.
Considering an \textsc{hm} representation, a direct upper bound can be obtained by: first count the number of coefficients different from~\({0,\pm{1}}\) to upper bound the number of multiplications/divisions; second count the number of non-zero coefficients, minus the number of rows, to get an upper bound on the number of additions/subtractions.
\par
To obtain lower operation counts, we use the following techniques:
first, we select among equivalently accurate algorithms: this is presented in~\cref{ssec:rotations};
second, we factor as much as possible the computations between rows of the \textsc{hm} representations, as in~\cref{ssec:factor};
third, we use dependent rows as more opportunities for factorization, as in~\cref{ssec:kernel}.
We then present some good candidates (as well as in~\cref{app:suppl})
and we eventually look at some potential sparse alternative change of basis in~\cref{ssec:schwartz}.
\subsection{Sparsifying via rotations}\label{ssec:rotations}
We have seen in~\cref{lem:actionLeavingGrowthFactorInvariant} that orthogonal transformations leave the Frobenius norm invariant and thus, the~\(\gamma_{2,1}\) growth factor.
Therefore, one can apply~\(\matrixsize{4}{4}\) generic Kronecker products of orthogonal~\(\matrixsize{2}{2}\) (rotation) matrices using~\cref{lem:actionOnHMRepresentation} and try to optimize the considered \textsc{hm} representation for several possible goals:
(1) %
a smaller number of non-zero coefficients in \textsc{hm} representation components;
(2) %
a non-zero pattern better suited to factorization (see the technique of~\cref{ssec:factor});
(3) %
a triangular (sparse) subset of independent rows (see the technique of~\cref{ssec:kernel}).
\par
For instance, to obtain~\cref{eq:asopt}, we solve for the minimal values of the Frobenius norms as in~\cref{prop:tencubetwonorm} and then for orthogonal transformations that produce as many vectors of the canonical basis as possible.
Doing so, we found that with~\(\gamma_{2,1}\) set to~\({\lfrac{16}{\sqrt{3}}+\lfrac{4}{\sqrt{2}}}\) and \textsc{hm} representation component Frobenius norms set to~\(\sqrt{10}\), the maximal possible number of canonical vectors was~$1$.
\Cref{eq:asopt} is one of those.
Similarly, \Cref{eq:powrot} is an orthogonal optimization of~\Cref{eq:powers}, with one canonical vector in each of components of the \textsc{hm} representation.
A \textsc{c}++ implementation of these tools is available in the \plinopt~library~\cite{jgd:2024:plinopt}.
\subsection{Factoring heuristics}\label{ssec:factor}
For the implementation of a given linear operator (in this work one of the matrices in the \textsc{hm} representation) one can try to find the shortest straight-line program for its computation.
The problem is \textsc{np}-hard in general (see e.g.~\cite[\S~3.1.1]{Boyar:2013aa}); but for small matrices, and over the field with~\(2\) elements,~\cite{Boyar:2013aa} and references therein propose several heuristics that potentially reduce the number of operations.
\par
Not all of them are applicable to fields with more elements, but we use a kind of common sub-expression eliminations, the ``cancellation-free'' search, described in~\cref{alg:factoringout} and implemented in \href{https://github.com/jgdumas/plinopt/blob/main/src/optimizer.cpp}{\texttt{plinopt/optimizer\,-D}}~\cite{jgd:2024:plinopt}.
\subsection{Kernel computation and Tellegen's principle}\label{ssec:kernel}
If the rank of the linear operator is lower than its number of rows, then an additional strategy has proven useful:
compute first some independent rows, then express the dependent ones by their linear relations.
For this, \cref{alg:kernel} computes a left kernel of the linear operator and uses it to compute the dependent rows via linear combinations of the independent ones.
This is sometimes faster than directly computing the dependent rows.
Of course, if the matrix's rank is lower than the number of \emph{columns}, one can apply~\cref{alg:kernel} to the transposed matrix and then apply the \emph{Tellegen's transposition} principle to recover the transposed linear dependencies (e.g.\ see~\cite{bostan:2003} and references therein).
\begin{algorithm}[h]
\caption{Kernel decomposition of a linear operator}\label{alg:kernel}
\begin{algorithmic}[1]
\Require{\(\mat{M}\) in~\(\Field^{m{\times}n}\) such that~\({r=\MatrixRank{\mat{M}}}\).}
\Ensure{A straight line program computing~\(\vec{u}\leftarrow{\mat{M}}{\cdot}\vec{v}\).}
\State{By Gaussian elimination, compute~\({\mat{M}={\mat{P}\cdot\mat{L}\cdot\mat{U}\cdot\mat{Q}}}\) with~\(\mat{P}\) a permutation matrix,~\(\mat{L}\) in~\(\Field^{m{\times}r}\) be~\({\begin{smatrix}\mat{L}_1\\ \mat{L}_2\end{smatrix}}\) unit upper triangular and~\(\mat{L}_1\) in~\(\Field^{r{\times}r}\); choosing~\(\mat{P}\) so that~(1) the first~\(r\) rows of~\(\mat{P}^{-1}\mat{M}\) are sparsest;~(2)~\(\mat{L}_1\) is the sparsest;~(3)~\(\mat{L}_2\) is the sparsest;}
\State{Let~\(\sigma\) be the permutation represented by~\(\mat{P}\);}
\State{Apply Alg.~\ref{alg:factoringout} to~\({\Transpose{\begin{smatrix}u_{\sigma(1)}\ldots{u_{\sigma(r)}}\end{smatrix}}\leftarrow\begin{smatrix}I_r 0\end{smatrix}\cdot\mat{P}\cdot\mat{M}\cdot\vec{v}}\);}
\Statex\Comment{\(\begin{smatrix}-{\mat{L}_{2}}\cdot{{\mat{L}_{1}}^{\!{-1}}}&\mat{I}_{m-r}\end{smatrix}\) is a (sparse) left kernel of~\(\mat{M}\) and provides the linear dependencies of the remaining rows}
\State{Apply Alg.~\ref{alg:factoringout} to~\({\Transpose{\begin{smatrix}u_{\sigma(r+1)}\ldots{u_{\sigma(m)}}\end{smatrix}}\leftarrow{\mat{L}_{2}}\cdot{{\mat{L}_{1}}^{\!{-1}}}\Transpose{\begin{smatrix}u_{\sigma(1)}\ldots{u_{\sigma(r)}}\end{smatrix}}}\).}
\end{algorithmic}
\end{algorithm}
\par
\Cref{alg:kernel} is implemented in \href{https://github.com/jgdumas/plinopt/blob/main/src/optimizer.cpp}{\texttt{plinopt/optimizer\,-K}}.
The Tellegen's transposition principle applied to such \textsc{slp}s is implemented in \href{https://github.com/jgdumas/plinopt/blob/main/src/transpozer.cpp}{\texttt{plinopt/transpozer}}~\cite{jgd:2024:plinopt,jgd:2024:mFMM}.
These routines have produced the implementations for our different \textsc{hm} formulas given in the following section (e.g.\ the implementation~\cref{alg:asopt} of~\cref{eq:asopt} with only~\(24\) additions and~\(12\) multiplications/divisions).
\begin{table}[h]%
\centering\fbox{\begin{minipage}{.95\linewidth}%
\[\setlength\arraycolsep{2pt}\renewcommand{\arraystretch}{1}
\begin{array}{llll}
t_1=\frac{\sqrt{3}}{3}a_{22}& t_2=a_{21}+t_1&s_1=\frac{\sqrt{3}}{3}b_{21}& s_2=s_1-b_{11}\\
t_3=a_{12}+t_2& l_1=\frac{\sqrt{3}}{2}a_{11}+\frac{1}{2}t_3&s_3=s_2+b_{22}& r_1=2s_1\\
l_2=a_{12}-t_1& l_3=t_2&r_2=s_2& r_3=s_1-b_{22}\\
l_4=2t_1& l_5=l_2-l_1&r_4=\frac{1}{2}s_3{-}\frac{\sqrt{3}}{2}b_{12}& r_5=r_3+r_4\\
l_6=l_5+l_4& l_7=l_5+l_3&r_6=r_1-r_5& r_7=r_5-r_2
\end{array}
\]
\[%
\begin{array}{llll}
p_1=l_1{\cdot}r_1&
p_2=l_2{\cdot}r_2&
p_3=l_3{\cdot}r_3&
p_4=l_4{\cdot}r_4\\
p_5=l_5{\cdot}r_5&
p_6=l_6{\cdot}r_6&
p_7=l_7{\cdot}r_7
\end{array}
\]
\[\setlength\arraycolsep{2pt}\renewcommand{\arraystretch}{1}
\begin{array}{llll}
w_2=p_5+p_1+p_6&
w_1=p_7+p_6&
w_3=w_2-p_2&
w_5=\frac{p_4+w_2}{2}\\
c_{12}=p_1-p_3-w_5&
c_{21}=w_3-w_5&
c_{22}=\sqrt{3}w_5\\
\multicolumn{4}{c}{c_{11}=\frac{\sqrt{3}}{3}(w_3-c_{12}-2w_1)}
\end{array}
\]
\end{minipage}}
\caption{\textsc{slp} of~\cref{eq:asopt} with~\(24\) add.\ and~\(12\) mul./div.}\label{alg:asopt}
\end{table}
\begin{figure}[htb]
\caption{Numerical accuracy vs size (normal distribution)}\label{fig:gamma}
\Description[Numerical accuracy vs size (normal
distribution)]{Numerical accuracy vs size (normal distribution)}
% GNUPLOT: LaTeX picture with Postscript
\begingroup
  \makeatletter
  \providecommand\color[2][]{%
    \GenericError{(gnuplot) \space\space\space\@spaces}{%
      Package color not loaded in conjunction with
      terminal option `colourtext'%
    }{See the gnuplot documentation for explanation.%
    }{Either use 'blacktext' in gnuplot or load the package
      color.sty in LaTeX.}%
    \renewcommand\color[2][]{}%
  }%
  \providecommand\includegraphics[2][]{%
    \GenericError{(gnuplot) \space\space\space\@spaces}{%
      Package graphicx or graphics not loaded%
    }{See the gnuplot documentation for explanation.%
    }{The gnuplot epslatex terminal needs graphicx.sty or graphics.sty.}%
    \renewcommand\includegraphics[2][]{}%
  }%
  \providecommand\rotatebox[2]{#2}%
  \@ifundefined{ifGPcolor}{%
    \newif\ifGPcolor
    \GPcolortrue
  }{}%
  \@ifundefined{ifGPblacktext}{%
    \newif\ifGPblacktext
    \GPblacktexttrue
  }{}%
  % define a \g@addto@macro without @ in the name:
  \let\gplgaddtomacro\g@addto@macro
  % define empty templates for all commands taking text:
  \gdef\gplbacktext{}%
  \gdef\gplfronttext{}%
  \makeatother
  \ifGPblacktext
    % no textcolor at all
    \def\colorrgb#1{}%
    \def\colorgray#1{}%
  \else
    % gray or color?
    \ifGPcolor
      \def\colorrgb#1{\color[rgb]{#1}}%
      \def\colorgray#1{\color[gray]{#1}}%
      \expandafter\def\csname LTw\endcsname{\color{white}}%
      \expandafter\def\csname LTb\endcsname{\color{black}}%
      \expandafter\def\csname LTa\endcsname{\color{black}}%
      \expandafter\def\csname LT0\endcsname{\color[rgb]{1,0,0}}%
      \expandafter\def\csname LT1\endcsname{\color[rgb]{0,1,0}}%
      \expandafter\def\csname LT2\endcsname{\color[rgb]{0,0,1}}%
      \expandafter\def\csname LT3\endcsname{\color[rgb]{1,0,1}}%
      \expandafter\def\csname LT4\endcsname{\color[rgb]{0,1,1}}%
      \expandafter\def\csname LT5\endcsname{\color[rgb]{1,1,0}}%
      \expandafter\def\csname LT6\endcsname{\color[rgb]{0,0,0}}%
      \expandafter\def\csname LT7\endcsname{\color[rgb]{1,0.3,0}}%
      \expandafter\def\csname LT8\endcsname{\color[rgb]{0.5,0.5,0.5}}%
    \else
      % gray
      \def\colorrgb#1{\color{black}}%
      \def\colorgray#1{\color[gray]{#1}}%
      \expandafter\def\csname LTw\endcsname{\color{white}}%
      \expandafter\def\csname LTb\endcsname{\color{black}}%
      \expandafter\def\csname LTa\endcsname{\color{black}}%
      \expandafter\def\csname LT0\endcsname{\color{black}}%
      \expandafter\def\csname LT1\endcsname{\color{black}}%
      \expandafter\def\csname LT2\endcsname{\color{black}}%
      \expandafter\def\csname LT3\endcsname{\color{black}}%
      \expandafter\def\csname LT4\endcsname{\color{black}}%
      \expandafter\def\csname LT5\endcsname{\color{black}}%
      \expandafter\def\csname LT6\endcsname{\color{black}}%
      \expandafter\def\csname LT7\endcsname{\color{black}}%
      \expandafter\def\csname LT8\endcsname{\color{black}}%
    \fi
  \fi
    \setlength{\unitlength}{0.0500bp}%
    \ifx\gptboxheight\undefined%
      \newlength{\gptboxheight}%
      \newlength{\gptboxwidth}%
      \newsavebox{\gptboxtext}%
    \fi%
    \setlength{\fboxrule}{0.5pt}%
    \setlength{\fboxsep}{1pt}%
    \definecolor{tbcol}{rgb}{1,1,1}%
\begin{picture}(4320.00,3960.00)%
    \gplgaddtomacro\gplbacktext{%
      \csname LTb\endcsname%%
      \put(219,538){\makebox(0,0)[r]{\strut{}$10^{-14}$}}%
      \csname LTb\endcsname%%
      \put(219,1392){\makebox(0,0)[r]{\strut{}$10^{-13}$}}%
      \csname LTb\endcsname%%
      \put(219,2246){\makebox(0,0)[r]{\strut{}$10^{-12}$}}%
      \csname LTb\endcsname%%
      \put(219,3100){\makebox(0,0)[r]{\strut{}$10^{-11}$}}%
      \csname LTb\endcsname%%
      \put(338,174){\makebox(0,0){\strut{}$32$}}%
      \csname LTb\endcsname%%
      \put(1327,174){\makebox(0,0){\strut{}$64$}}%
      \csname LTb\endcsname%%
      \put(2317,174){\makebox(0,0){\strut{}$128$}}%
      \csname LTb\endcsname%%
      \put(3307,174){\makebox(0,0){\strut{}$256$}}%
      \csname LTb\endcsname%%
      \put(4297,174){\makebox(0,0){\strut{}$512$}}%
    }%
    \gplgaddtomacro\gplfronttext{%
      \csname LTb\endcsname%%
      \put(728,3608){\makebox(0,0)[l]{\strut{}Winograd \cite{Winograd:1977:complexite}}}%
      \csname LTb\endcsname%%
      \put(728,3434){\makebox(0,0)[l]{\strut{}Strassen \cite{strassen:1969}}}%
      \csname LTb\endcsname%%
      \put(728,3260){\makebox(0,0)[l]{\strut{}\cref{alg:powers,eq:powers}}}%
      \csname LTb\endcsname%%
      \put(728,3085){\makebox(0,0)[l]{\strut{}\cref{alg:powrot,eq:powrot}}}%
      \csname LTb\endcsname%%
      \put(728,2911){\makebox(0,0)[l]{\strut{}\cref{alg:0695}}}%
      \csname LTb\endcsname%%
      \put(728,2737){\makebox(0,0)[l]{\strut{}\cref{alg:0661}}}%
      \csname LTb\endcsname%%
      \put(728,2562){\makebox(0,0)[l]{\strut{}\cref{alg:asopt,eq:asopt}}}%
      \csname LTb\endcsname%%
      \put(728,2388){\makebox(0,0)[l]{\strut{}Conventional}}%
      \csname LTb\endcsname%%
      \put(0,3887){\makebox(0,0){\strut{}error}}%
      \csname LTb\endcsname%%
      \put(2296,0){\makebox(0,0){\strut{}square matrix dimension}}%
    }%
    \gplbacktext
    \put(0,0){\includegraphics[width={216.00bp},height={198.00bp}]{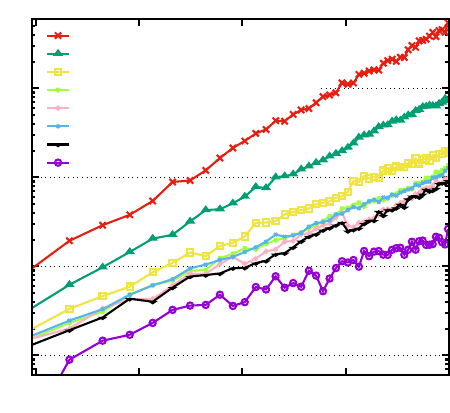}}%
    \gplfronttext
  \end{picture}%
\endgroup
\end{figure}

\begin{remark}
The accuracy obtained with our different fast variants is given in~\cref{fig:gamma} using the Matlab framework of~\cite{Dai:2023aa}, which we forked in~\cite{jgd:2024:mFMM} and where we have just added the implementations of the variants presented here.
Thus, in~\cref{fig:gamma,fig:Schwartz,fig:badcond} we present the error as the infinity norm of the difference between the result of our implementations and the exact matrix multiplication.
\end{remark}
In~\cref{fig:gamma}, all our variants, \cref{alg:powers,alg:powrot,alg:0695,alg:0661,alg:asopt} with decreasing~\(\gamma_{2,1}\), are mostly more and more accurate.
Our best algorithm presents an order of magnitude advantage over Strassen's and two orders of magnitude advantage over Winograd's.
It is then quite close to the conventional algorithm's accuracy.
\Cref{fig:gamma} uses normal distribution, the same behavior is obtained, e.g., with a uniform distribution (see corresponding code in~\cite{jgd:2024:mFMM}).
\begin{remark}
In~\cite{bini:1980} the authors consider all bilinear algorithms using~\(7\) multiplications with constants of the form~\(\pm{2^i}\); they showed that Strassen's original method~\cite{strassen:1969} reaches in this class the minimum value~\(12\) of their~\(\gamma_{0,1\infty}\) factor error bound (while for instance that of Winograd~\cite{Winograd:1977:complexite} is~\(18\), see also~\cref{tab:bounds:sota}).
\par
We propose an algorithm in this class that has a worse~\(\gamma_{0,1\infty}\) of~\(40\), but a~\(\gamma_{2,1}\) of~\({\lfrac{4}{\sqrt{2}}+\lfrac{75}{8}\approx{12.2034}}\), better than those of Strassen~\(14.8284\) or Winograd~\(17.8530\) (see~\cref{tab:frobenius}). This algorithm is defined by the following~\textsc{hm} representation:
\begin{equation}\label{eq:powers}
\begin{smatrix}
0&-1&1&0\\
1&\frac{1}{2}&{-}\frac{1}{2}&{-}\frac{1}{4}\\
0&0&1&{-}\frac{1}{2}\\
0&1&0&{-}\frac{1}{2}\\
0&0&1&\frac{1}{2}\\
1&{-}\frac{1}{2}&\frac{1}{2}&{-}\frac{1}{4}\\
0&1&0&\frac{1}{2}\\
\end{smatrix};\quad
\begin{smatrix}
1&0&0&-1\\
1&\frac{1}{2}&0&0\\
0&\frac{1}{2}&0&-1\\
\frac{1}{2}&\frac{1}{4}&-1&{-}\frac{1}{2}\\
0&\frac{1}{2}&0&1\\
1&{-}\frac{1}{2}&0&0\\
\frac{1}{2}&{-}\frac{1}{4}&1&{-}\frac{1}{2}\\
\end{smatrix};\quad
\Transpose{\begin{smatrix}
0 & 1 & 1 & 0 \\
\frac{1}{2} & 1 & 0 & 0 \\
\frac{1}{4} & {-}\frac{1}{2} & {-}\frac{1}{2} & 1 \\
{-}\frac{1}{2} & 0 & 1 & 0 \\
\frac{1}{4} & \frac{1}{2} & \frac{1}{2} & 1 \\
\frac{1}{2} & -1 & 0 & 0 \\
\frac{1}{2} & 0 & 1 & 0 \\
\end{smatrix}}.
\end{equation}
\Cref{fig:gamma} shows that the induced algorithm given in~\Cref{alg:powers} is also more accurate in practice than both Strassen's and Winograd's variants.
\begin{table}[ht]%
\fbox{%
\begin{minipage}{.95\linewidth}%
\[\setlength\arraycolsep{4pt}\renewcommand{\arraystretch}{1}
\begin{array}{llll}
r_{1} = \frac{1}{2}a_{22}& t_{2} = a_{21}-r_{1} &
u_{1} = \frac{1}{2}b_{12}& s_{1} = b_{11}+u_{1}\\
t_{3} = a_{12}+r_{1}&t_{0}=t_{2}-t_{3}&
s_{2} = u_{1}-b_{22}& u_{2} = s_{1}-b_{22}\\
t_{4} = a_{21}+r_{1}& r_{2}=t_{2}-a_{12}&
s_{4} = b_{22}+u_{1}&s_{0}=s_{1}-s_{4} \\
t_{5} = a_{11}+\frac{1}{2}r_{2} & t_{1}=t_{5}-t_{0}&
s_{3} = \frac{1}{2}u_{2}-b_{21} & s_{5}=s_{0}-s_{2}
\end{array}
\]
\[%
\begin{array}{llll}
p_{1}=t_{0}{\cdot}s_{0}&
p_{2}=t_{1}{\cdot}s_{1}&
p_{3}=t_{2}{\cdot}s_{2}&
p_{4}=t_{3}{\cdot}s_{3}\\
p_{5}=t_{4}{\cdot}s_{4}&
p_{6}=t_{5}{\cdot}s_{5}&
\multicolumn{2}{l}{p_{7}=(t_{4}{-}t_{0}){\cdot}(s_{0}{-}s_{3})}
\end{array}
\]
\[\setlength\arraycolsep{4pt}\renewcommand{\arraystretch}{1}
\begin{array}{llll}
c_{22}{=}p_{5}{+}p_{3}&
v_{1}{=}p_{1}{-}p_{6}{-}p_{3}&
v_{2}{=}p_{7}{+}p_{6}&
v_{3}{=}p_{4}{+}v_{1}\\
v_{4}{=}\frac{c_{22}}{2}&
c_{12}{=}p_{2}{+}v_{1}{+}v_{4}&
c_{21}{=}v_{2}{+}v_{3}{+}v_{4}&
c_{11}{=}\frac{(c_{12}+v_{2}-v_{3})}{2}
\end{array}
\]
\end{minipage}
}
\caption{\textsc{slp} of~\cref{eq:powers} with~\(27\) add.,~\(6\) div.\ by~\(2\) and~\({\gamma_{F}\!\approx\!{12.2034}}\)}\label{alg:powers}
\end{table}
\end{remark}
\begin{remark}\label{rem:rationalHigherOrderApproximations}
\cref{eq:powers} was obtained by approximating the minimal point of the~\(\gamma_{2,1}\) growth factor taken from \Cref{prop:BestGrowthFactor} with the smallest powers of~\(2\).
Further rational higher-order approximations are obtained in the same vein, giving for instance~\cref{eq:powrot,alg:powrot,alg:0695,alg:0661}, as shown in~\cref{sec:rationalapproximations}.
\end{remark}
\subsection{Alternative basis sparsification}\label{ssec:schwartz}
The technique of~\cite{Karstadt:2017aa,Beniamini:2019aa} reduces the number of operations by factoring each matrix in the \textsc{hm} decomposition into a sparser one via a~\(\matrixsize{4}{4}\) change of basis (\textsc{CoB}).
In a recursive version, the left and right-hand sides (resp.\ result) of considered \textsc{CoB} can be recursively precomputed (resp.\ post-computed),  for a total cost in~\({\bbigO{n^{2}\log{n}}}\).
In the meantime the sparser~\(\matrixsize{7}{4}\) matrices are applied, reducing the dominant term of the computation.
The optimal decomposition of Winograd's algorithm in~\cite[\S~3.3]{Karstadt:2017aa} reduces the number of intermediate additions from~\(15\) to~\(12\).
For a fully recursive version, this reduces the leading term in the cost bound from~\(6{n}^{\log_{2}{\!7}}\) to~\(5{n}^{\log_{2}{\!7}}\).
\par
Applying  this approach to the algorithm of~\cref{eq:asopt}, using  the \textsc{CoB} of~\cref{eq:alternative} leads to the sparser \textsc{HM} representation in~\cref{eq:schwartz}.
The leading term of the cost bound thus is reduced from~\(13n^{\log_{2}{\!7}}\) for~\cref{alg:asopt} to~\(5n^{\log_{2}{\!7}}\) for~\cref{alg:LCoB,alg:RCoB,alg:schwartzopt,alg:CoBP,alg:CoBschwartzopt}.
\begin{align}\label{eq:alternative}
\begin{smatrix}
0&0&0&\frac{2}{\sqrt{3}}\\
0&1&0&\frac{\sqrt{3}}{3}\\
0&0&1&{-}\frac{\sqrt{3}}{3}\\
{-}\frac{\sqrt{3}}{2}&{-}\frac{1}{2}&\frac{1}{2}&{-}\frac{\sqrt{3}}{2}\\
\end{smatrix}&;
&\begin{smatrix}
0&\frac{2}{\sqrt{3}}&0&0\\
1&{-}\frac{\sqrt{3}}{3}&0&0\\
0&\frac{\sqrt{3}}{3}&0&-1\\
{-}\frac{1}{2}&\frac{\sqrt{3}}{2}&{-}\frac{\sqrt{3}}{2}&{-}\frac{1}{2}\\
\end{smatrix}&;&
\Transpose{\begin{smatrix}
{-}\frac{2}{\sqrt{3}}&0&0&0\\
\frac{\sqrt{3}}{3}&-1&0&0\\
{-}\frac{\sqrt{3}}{3}&0&-1&0\\
\frac{\sqrt{3}}{2}&{-}\frac{1}{2}&\frac{1}{2}&\frac{\sqrt{3}}{2}\\
\end{smatrix}}\!\!,&
\\\label{eq:schwartz}
\begin{smatrix}
0&0&1&-1\\
0&0&1&0\\
0&1&0&0\\
-1&0&0&0\\
0&0&0&1\\
1&0&0&1\\
0&1&0&1\\
\end{smatrix}&;
&\begin{smatrix}
1&0&0&0\\
0&-1&0&0\\
0&0&1&0\\
0&0&1&-1\\
0&0&0&1\\
1&0&0&-1\\
0&1&0&1\\
\end{smatrix}&;&
\Transpose{\begin{smatrix}
0&-1&0&1\\
0&0&1&0\\
0&1&0&0\\
0&0&1&1\\
0&0&0&1\\
1&0&0&1\\
1&0&0&0\\
\end{smatrix}}\!\!.&
\end{align}
To obtain this \textsc{CoB}, the generic technique of~\cite{Beniamini:2019aa} can be used.
In our case, for~\(\matrixsize{4}{4}\) \textsc{CoB}, the following heuristic was sufficient to obtain optimal (\(12\)-additions) sparse~\({(0,\pm{1})}\) intermediate matrices:
(1) Find independent columns of each \textsc{CoB} one at a time;
(2) For this, alternatively factor-out common coefficients in the resulting columns and find a linear combination minimizing the density of the resulting column, using as coefficients of the combination only in~\({\lbrace{-1},0,1\rbrace}\) and some of the values of the coefficients of the input;
(3) Until this alternation does not sparsify anymore.
This heuristic is implemented in \href{https://github.com/jgdumas/plinopt/blob/main/src/sparsifier.cpp}{\texttt{plinopt/sparsifier}}~\cite{jgd:2024:plinopt} and the resulting  implementation is shown in~\cref{alg:LCoB,alg:RCoB,alg:schwartzopt,alg:CoBP,alg:CoBschwartzopt}.
\newcommand{\LCoB}{\ensuremath{\text{LCoB}}}
\begin{algorithm}[htbp]
\caption{\LCoB$(\mat{A},\ell)$ left change-of-basis of~\cref{eq:alternative}}\label{alg:LCoB}
\begin{algorithmic}[1]
\IfThenEnd{$\ell\leq{0}$}{\Return{$\mat{A}$.}}
\State{%
$m_1=\LCoB(a_{11},\ell{-}1)$;
$m_2=\LCoB(a_{21},\ell{-}1)$;}
\State{%
$m_3=\LCoB(a_{12},\ell{-}1)$;
$m_4=\LCoB(a_{22},\ell{-}1)$;}
\State{%
$t_1 = \frac{1}{\sqrt{3}}m_4$;
$t_2 = m_3-m_2$;
$t_3 = m_1+m_4$;}
\State{\Return{$\left[\frac{2}{\sqrt{3}}m_4,
m_2+t_1,
m_3-t_1,
\frac{1}{2}t_2-\frac{\sqrt{3}}{2}t_3\right]$.}}
\end{algorithmic}
\end{algorithm}
\newcommand{\RCoB}{\ensuremath{\text{RCoB}}}
\begin{algorithm}[htbp]
\caption{\RCoB$(\mat{A},\ell)$ right change-of-basis of~\cref{eq:alternative}}\label{alg:RCoB}
\begin{algorithmic}[1]
\IfThenEnd{$\ell\leq{0}$}{\Return{$\mat{A}$.}}
\State{%
$m_1=\RCoB(a_{11},\ell{-}1)$;
$m_2=\RCoB(a_{21},\ell{-}1)$;}
\State{%
$m_3=\RCoB(a_{12},\ell{-}1)$;
$m_4=\RCoB(a_{22},\ell{-}1)$;}
\State{%
$t_1 =\frac{1}{\sqrt{3}}m_2$;
$t_2 = m_1+m_4$;
$t_3 = m_2-m_3$;}
\State{\Return{$\left[\frac{2}{\sqrt{3}}m_2,
m_1-t_1,
t_1-m_4,
\frac{\sqrt{3}}{2}t_3-\frac{1}{2}t_{2}\right]$.}}
\end{algorithmic}
\end{algorithm}
\newcommand{\CoBP}{\ensuremath{\text{CoBP}}}
\begin{algorithm}[htbp]
\caption{\CoBP\({(\mat{A},\ell)}\) product change-of-basis of~\cref{eq:alternative}}\label{alg:CoBP}
\begin{algorithmic}[1]
\IfThenEnd{$\ell\leq{0}$}{\Return{\(\mat{A}\)}}
\State{%
$m_1=\CoBP(a_{11},\ell{-}1)$;
$m_2=\CoBP(a_{21},\ell{-}1)$;}
\State{%
$m_3=\CoBP(a_{12},\ell{-}1)$;
$m_4=\CoBP(a_{22},\ell{-}1)$;}
\State{%
$t_1 = \frac{1}{2}m_4$;
$t_2 = m_2-m_3$;
$t_3 = \frac{\sqrt{3}}{2}m_4$;}
\State{\Return{\(\left[t_3+\frac{1}{\sqrt{3}}t_2-m_1\frac{2}{\sqrt{3}},-m_2-t_1,t_1-m_3,t_3\right]\).}}
\end{algorithmic}
\end{algorithm}
\begin{algorithm}[htbp]
\caption{Sparsification applied to~\cref{eq:asopt} (via~\cref{eq:alternative,eq:schwartz})}\label{alg:CoBschwartzopt}
\begin{algorithmic}[1]
\Require{$\mat{A},\mat{B}\in\Field^{{{n_{0}}2^\ell}\times{{n_{0}}2^\ell}}$.}
\Ensure{$\mat{C}=\MatrixProduct{A}{B}$.}
\State{$\bar{\mat{A}}\leftarrow\LCoB(\mat{A},\ell)$; $\bar{\mat{B}}\leftarrow\RCoB(\mat{B},\ell)$ \hfill\Comment{Via~\cref{alg:LCoB,alg:RCoB}}}
\State{$\bar{\mat{C}}\leftarrow\bar{\mat{A}}\cdot\bar{\mat{B}}$; \hfill\Comment{Via~\cref{alg:schwartzopt} with~$\ell$ recursive calls}}
\State{\Return{$\mat{C}\leftarrow\CoBP(\bar{\mat{C}},\ell)$.\hfill\Comment{Via~\cref{alg:CoBP}}}}
\end{algorithmic}
\end{algorithm}
\begin{table}[ht]%
\fbox{%
\begin{minipage}{.95\linewidth}\vspace{-5pt}
\[%
\begin{array}{lll}
s_1=a_{11}+a_{12}&
s_2=a_{11}+a_{22}&
s_3=a_{11}-a_{21}\\
t_1=b_{12}+b_{22}&
t_2=b_{11}+b_{12}&
t_3=b_{12}+b_{21}
\end{array}
\]
\vspace{-5pt}
\[%
\begin{array}{llll}
p_1 = a_{11}{\cdot}b_{12} &
p_2 = s_1{\cdot}b_{21} &
p_3 = a_{21}{\cdot}t_1 &
p_4 = a_{12}{\cdot}t_2 \\
p_5 = s_2{\cdot}b_{22} &
p_6 = a_{22}{\cdot}t_3 &
p_7 = s_3{\cdot}b_{11}
\end{array}\]
\vspace{-5pt}
\[%
\begin{array}{llll}
c_{11}{=}p_7{-}p_6 &
c_{12}{=}p_2{+}p_3 &
c_{21}{=}p_4{-}p_5 &
c_{22}{=}p_1{+}p_2{+}p_5{+}p_6
\end{array}\]
\vspace{-10pt}
\end{minipage}}
\caption{\textsc{slp} of~\cref{eq:schwartz} with~\(12\) additions}\label{alg:schwartzopt}
\end{table}

The sparsification process improves the  \(\gamma_{2,1}\) growth factor : applied to
Winograd's original algorithm~\cite{Winograd:1977:complexite}, it goes down from
\({{7}+\lfrac{8}{\sqrt{2}}+\lfrac{9}{\sqrt{3}} \approx{17.853}}\)
to \({4+\lfrac{12}{\sqrt{2}}\approx{12.486}}\) in~\cite{Karstadt:2017aa} and applied
on~\cref{eq:asopt}, from \({{{\lfrac{4}{\sqrt{2}}}+{\lfrac{16}{\sqrt{3}}}}\approx{\!12.066}}\) to~\({{7+\lfrac{6}{\sqrt{2}}}\approx{11.243}}\).

However the resulting sparser bilinear operator itself no longer correspond to a matrix multiplication
algorithm, and this has two consequences: first, the error of this operator only follows the weaker
bound~\Cref{eq:opaccuracy} of~\Cref{th:recbound} (with an additionnal logarithmic factor). Second,
the error bound for the resulting matrix multiplication algorithm must then also include
the contribution of the \textsc{CoB}. A tight analysis of this contribution is made in the recent
work of~\cite[Th.~I.1]{Schwartz:2024aa}. There, the first error bound for an alternative
basis based matrix multiplication algorithm is produced, in the form of \Cref{eq:opaccuracy}
where the contribution of the \textsc{CoB} only affects the multiplicative constant $Q_0$ by a small amount.
\begin{remark}
\cref{alg:CoBschwartzopt}, enjoys simultaneously
the best known leading term in the cost bound, and a close to the best known numerical
accuracy for sub-cubic $2\times2$ algorithms.
The former property comes from the fact that \cref{eq:schwartz} requires only~\(12\) additions.
The latter is shown in practice in~\cref{fig:Schwartz}, and  the proof of~\cite[Th.~I.1]{Schwartz:2024aa}
could be adapted to this algorithm for a theoretical error bound.
\end{remark}
\begin{figure}[htbp]
\caption{Numerical effect of sparsification (\footnotesize{normal distribution})}\label{fig:Schwartz}
\Description[Numerical effect of sparsification]{Numerical effect of sparsification}
%
% GNUPLOT: LaTeX picture with Postscript
\begingroup
  \makeatletter
  \providecommand\color[2][]{%
    \GenericError{(gnuplot) \space\space\space\@spaces}{%
      Package color not loaded in conjunction with
      terminal option `colourtext'%
    }{See the gnuplot documentation for explanation.%
    }{Either use 'blacktext' in gnuplot or load the package
      color.sty in LaTeX.}%
    \renewcommand\color[2][]{}%
  }%
  \providecommand\includegraphics[2][]{%
    \GenericError{(gnuplot) \space\space\space\@spaces}{%
      Package graphicx or graphics not loaded%
    }{See the gnuplot documentation for explanation.%
    }{The gnuplot epslatex terminal needs graphicx.sty or graphics.sty.}%
    \renewcommand\includegraphics[2][]{}%
  }%
  \providecommand\rotatebox[2]{#2}%
  \@ifundefined{ifGPcolor}{%
    \newif\ifGPcolor
    \GPcolortrue
  }{}%
  \@ifundefined{ifGPblacktext}{%
    \newif\ifGPblacktext
    \GPblacktexttrue
  }{}%
  % define a \g@addto@macro without @ in the name:
  \let\gplgaddtomacro\g@addto@macro
  % define empty templates for all commands taking text:
  \gdef\gplbacktext{}%
  \gdef\gplfronttext{}%
  \makeatother
  \ifGPblacktext
    % no textcolor at all
    \def\colorrgb#1{}%
    \def\colorgray#1{}%
  \else
    % gray or color?
    \ifGPcolor
      \def\colorrgb#1{\color[rgb]{#1}}%
      \def\colorgray#1{\color[gray]{#1}}%
      \expandafter\def\csname LTw\endcsname{\color{white}}%
      \expandafter\def\csname LTb\endcsname{\color{black}}%
      \expandafter\def\csname LTa\endcsname{\color{black}}%
      \expandafter\def\csname LT0\endcsname{\color[rgb]{1,0,0}}%
      \expandafter\def\csname LT1\endcsname{\color[rgb]{0,1,0}}%
      \expandafter\def\csname LT2\endcsname{\color[rgb]{0,0,1}}%
      \expandafter\def\csname LT3\endcsname{\color[rgb]{1,0,1}}%
      \expandafter\def\csname LT4\endcsname{\color[rgb]{0,1,1}}%
      \expandafter\def\csname LT5\endcsname{\color[rgb]{1,1,0}}%
      \expandafter\def\csname LT6\endcsname{\color[rgb]{0,0,0}}%
      \expandafter\def\csname LT7\endcsname{\color[rgb]{1,0.3,0}}%
      \expandafter\def\csname LT8\endcsname{\color[rgb]{0.5,0.5,0.5}}%
    \else
      % gray
      \def\colorrgb#1{\color{black}}%
      \def\colorgray#1{\color[gray]{#1}}%
      \expandafter\def\csname LTw\endcsname{\color{white}}%
      \expandafter\def\csname LTb\endcsname{\color{black}}%
      \expandafter\def\csname LTa\endcsname{\color{black}}%
      \expandafter\def\csname LT0\endcsname{\color{black}}%
      \expandafter\def\csname LT1\endcsname{\color{black}}%
      \expandafter\def\csname LT2\endcsname{\color{black}}%
      \expandafter\def\csname LT3\endcsname{\color{black}}%
      \expandafter\def\csname LT4\endcsname{\color{black}}%
      \expandafter\def\csname LT5\endcsname{\color{black}}%
      \expandafter\def\csname LT6\endcsname{\color{black}}%
      \expandafter\def\csname LT7\endcsname{\color{black}}%
      \expandafter\def\csname LT8\endcsname{\color{black}}%
    \fi
  \fi
    \setlength{\unitlength}{0.0500bp}%
    \ifx\gptboxheight\undefined%
      \newlength{\gptboxheight}%
      \newlength{\gptboxwidth}%
      \newsavebox{\gptboxtext}%
    \fi%
    \setlength{\fboxrule}{0.5pt}%
    \setlength{\fboxsep}{1pt}%
    \definecolor{tbcol}{rgb}{1,1,1}%
\begin{picture}(4320.00,3960.00)%
    \gplgaddtomacro\gplbacktext{%
      \csname LTb\endcsname%%
      \put(219,538){\makebox(0,0)[r]{\strut{}$10^{-14}$}}%
      \csname LTb\endcsname%%
      \put(219,1392){\makebox(0,0)[r]{\strut{}$10^{-13}$}}%
      \csname LTb\endcsname%%
      \put(219,2246){\makebox(0,0)[r]{\strut{}$10^{-12}$}}%
      \csname LTb\endcsname%%
      \put(219,3100){\makebox(0,0)[r]{\strut{}$10^{-11}$}}%
      \csname LTb\endcsname%%
      \put(338,174){\makebox(0,0){\strut{}$32$}}%
      \csname LTb\endcsname%%
      \put(1327,174){\makebox(0,0){\strut{}$64$}}%
      \csname LTb\endcsname%%
      \put(2317,174){\makebox(0,0){\strut{}$128$}}%
      \csname LTb\endcsname%%
      \put(3307,174){\makebox(0,0){\strut{}$256$}}%
      \csname LTb\endcsname%%
      \put(4297,174){\makebox(0,0){\strut{}$512$}}%
    }%
    \gplgaddtomacro\gplfronttext{%
      \csname LTb\endcsname%%
      \put(728,3608){\makebox(0,0)[l]{\strut{}Sparse Winograd, by \cite{Karstadt:2017aa}}}%
      \csname LTb\endcsname%%
      \put(728,3434){\makebox(0,0)[l]{\strut{}Winograd \cite{Winograd:1977:complexite}}}%
      \csname LTb\endcsname%%
      \put(728,3260){\makebox(0,0)[l]{\strut{}Sparse Strassen}}%
      \csname LTb\endcsname%%
      \put(728,3085){\makebox(0,0)[l]{\strut{}Strassen \cite{strassen:1969}}}%
      \csname LTb\endcsname%%
      \put(728,2911){\makebox(0,0)[l]{\strut{}Sparse \cref{alg:asopt} (\cref{alg:CoBschwartzopt})}}%
      \csname LTb\endcsname%%
      \put(728,2737){\makebox(0,0)[l]{\strut{}\cref{alg:asopt,eq:asopt}}}%
      \csname LTb\endcsname%%
      \put(728,2562){\makebox(0,0)[l]{\strut{}Conventional}}%
      \csname LTb\endcsname%%
      \put(0,3887){\makebox(0,0){\strut{}error}}%
      \csname LTb\endcsname%%
      \put(2296,0){\makebox(0,0){\strut{}square matrix dimension}}%
    }%
    \gplbacktext
    \put(0,0){\includegraphics[width={216.00bp},height={198.00bp}]{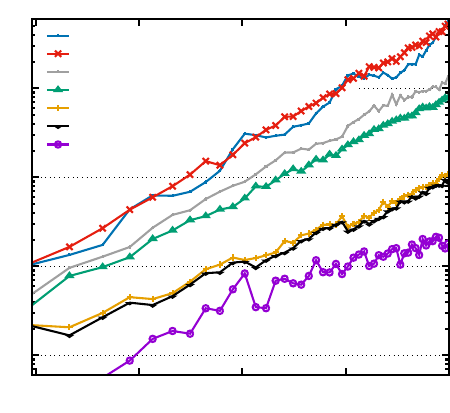}}%
    \gplfronttext
  \end{picture}%
\endgroup
\end{figure}
\par
Following~\cite[\S~3.2]{Ozaki:2012:erfMM}, we can also confirm our algorithms' accuracy on badly conditioned matrices (see~\cref{fig:badcond} in~\Cref{seq:furtherNumericalExperiments}).
\begin{remark}
To further improve their practical behavior, as done in~\cite[\S~4.3]{demmel:2007b},~\cite[\S~6.1]{ballard:2012a} or~\cite[\S~6]{BBDLS16}, some diagonal scaling adapted to specific input matrices can be added to any algorithms and thus to any of the variants presented here.
The idea is to precondition input matrices with well suited matrices thanks to \Cref{lem:sandwiching}.
\end{remark}
\section{Conclusion and future work}
We have presented a technique and a software to find more accurate recursive fast matrix multiplication algorithms.
Our analysis shows that our most accurate~\(\matrixsize{2}{2}\) formulas are probably optimal with respect to the tensor nuclear (Frobenius) norm.
We still anyway have a potential gap of at most 2.6\% to further explore.
\par
We also have shown ways to optimize the time complexity of~\(\matrixsize{2}{2}\) matrix product variants to simultaneously obtain a better accuracy in practice and a time complexity bound with the best currently known leading term (obtained via alternative basis sparsification).
There remains to compare the actual timings of these variants on different type of matrices and different range of matrix dimensions.
\par
Also, isotropies play a central role in this matter as shown by the fact that the minimal growth factor reached in this work is exactly the same as that of the algorithm obtained by~\cite[Eq.~(22)]{Grochow:2016aa}, while reconstructing Strassen's algorithm, using only the knowledge of its stabilizer and its representation with minimal Frobenius norms.
\clearpage
\appendix%
\section{Supplementary materials}
\subsection{Computational proofs}\label{sec:ComputationalProofs}
We gather here proofs of several propositions that are
simple computations on objects presented in our work (verified also
in~\cite{jgd:2024:mFMM}).
\begin{proof}[Proof of~\cref{prop:BestGrowthFactor}]
To simplify our computations, we use the following coordinates~\({{(\rho,\xi)}={\bigl(\sqrt[4]{\lfrac{4}{r}},\lfrac{(x-1)}{2}\bigr)}}\), the matrix~\({u(\rho,\xi)}\) and the associated isotropy~\(\IsotropyAction{\Isotropy{g}_{\rho,\xi}}{\tensor{S}}\).
In that case, the explicit expression of the~\(\gamma_{2,1}\) growth factor~\({\gamma\bigl({\IsotropyAction{\Isotropy{g}_{r,x}}{\tensor{S}}}\bigr)}\)
along this orbit is~\({2\,\sqrt{2}+3\,{\mathcal{A}}}\) with~\({\mathcal{A}(r,x)}\) equal to~\(\lfrac{\bigl({(1+x)}^{2}\!+r\bigr)\bigl({(x-1)}^{2}\!+r\bigr)}{r\sqrt{r}}\).
To conclude, we prove that the minimum of~\({\mathcal{A}(r,x)}\) in~\({{\RR^{+}}\times{\RR}}\) is~\({\lfrac{16}{3\sqrt{3}}}\).
The partial derivatives of~\(\mathcal{A}(r,x)\) w.r.t.~\(r\) and~\(x\) are
\begin{equation}\label{eq:growthfactorgradient}
\frac{\partial\mathcal{A}}{\partial{x}}\!=\!4{\frac{\left({x}^{2}+r-1\right)x}{{r}^{3/2}}},\
\frac{\partial\mathcal{A}}{\partial{r}}\!=\!{\frac{{r}^{2}-2\left({x}^{2}+1\right)r-3{\left({x}^{2}-1\right)}^{2}}{2{r}^{5/2}}}.
\end{equation}
First, notice that~\({\frac{\partial\mathcal{A}}{\partial{x}}\!\left(1-x^2,x\right)}\) is~\(0\) and that~\(\frac{\partial\mathcal{A}}{\partial{r}}\!\left(1-x^2,x\right)\) is equal to~\({\lfrac{2}{{\bigl((x-1)(1+x)\bigr)}{}^{3/2}}}\).
The only critical point is~\({x}\) equal to~\(0\) and, as~\(r\) is positive, it only could be equal to~\(3\).
The Hessian matrix is:
\begin{equation}
H(\mathcal{A}(r,x))=
\frac{1}{r^{\frac{3}{2}}}\begin{smatrix}
4\,\left({3\,{x}^{2}+r-1}\right)&%
{-\frac{2}{r}}\,\left({3\,{x}^{2}+r-3}\right)\\%
-\frac{2}{r}\,\left({3\,{x}^{2}+r-3}\right)&%
-{\frac{r^2-6\,\left(x^{2}+1\right)r-15\,{\left(x^{2}-1\right)}^{2}}{4r^{2}}}
\end{smatrix},
\end{equation}
one can notice that~\({H(\mathcal{A}(3,0))}\) is equal to~\({\lfrac{\begin{smatrix} 2^{3}&0\\ 0&\lfrac{2}{3} \end{smatrix}}{3\sqrt{3}}}\).
\par
Hence, the second partial derivative test states that~\({\bigl(\sqrt[4]{4/3},-1/2\bigr)}\) is a local minimum of~\(\GrowthFactor{\IsotropyAction{\Isotropy{g}_{r,x}}{\tensor{S}}}\); it is equal to~\({2\,\sqrt{2}+\lfrac{16}{\sqrt{3}}}\) that is~\({\mathord{\approx}{12.06603143}}\).
To conclude, the~\(\gamma_{2,1}\) growth factor reaches its global minimal at this point on the considered orbit because it is its only critical point.
\end{proof}
\begin{proof}[Proof of~\cref{prop:tencubetwonorm}]
Recall that any of the three matrices in the \textsc{hm} representation of such an algorithm is obtained by row and column permutations of one of these matrices, multiplied by the Kronecker product of two invertible~\(\matrixsize{2}{2}$ matrices~\(\mat{W}\) and~\(\mat{V}\) (see \Cref{lem:actionOnHMRepresentation} and \Cref{thm:IsotropiesActTransitivelyOnOptimalAlgorithm}).
From the analysis of~\cref{sec:numericalStabilityMeasure}, we only need to consider the case where~\(\mat{W}\) and~\(\mat{V}\) are each of the form~\({\begin{smatrix} r & x\\ 0 & r^{-1} \end{smatrix}}\), with strictly positive~\(r\) (matrices~\(\mat{W}\) and~\(\mat{V}\) are taken in a simpler form for the sake of simplicity).
For this, we let~\(\mat{W}\) be~\({\begin{smatrix} r & x\\ 0 & r^{-1} \end{smatrix}}\),~\(\mat{V}\) be~\({\begin{smatrix} s& y\\ 0 & s^{-1} \end{smatrix}}\) and choose~\(\mat{L}\) the first component of Strassen's \textsc{hm} representation given in~\cref{eq:StrassenHMRepresentation}.
The Frobenius norm of~\({\mat{L}\cdot(\mat{W}\tensorproduct{\mat{V}})}$ is given by the following function of~\({r,x,s}\) and~\(y\):
\begin{multline}
f(r,x,s,y)=4r^2s^2 + 3r^2y^2 + 3s^2x^2 + x^2y^2\\
+ \frac{r^2}{s^2} + \frac{s^2}{r^2}+\frac{1}{{(rs)}^2}
+ {\left(\frac{x}{s} + \frac{1}{rs}\right)}^{\!2}
+ {\left(\frac{y}{r} - \frac{1}{rs}\right)}^{\!2}
+ {\left(\frac{x}{s} - xy\right)}^{\!2}
\\
+ {\left(\frac{y}{r} + xy\right)}^{\!2}
+{\left(xy + \frac{1}{rs}\right)}^{\!2}
+{\left(\frac{s}{r} + xs\right)}^{\!2}
+{\left(\frac{r}{s} - ry\right)}^{\!2}.
\end{multline}
Solving the four partial derivatives of the gradient~\({\left[\frac{\partial{f}}{\partial{r}}, \frac{\partial{f}}{\partial{s}}, \frac{\partial{f}}{\partial{x}}, \frac{\partial{f}}{\partial{y}}\right]}\), for a simultaneous zero, we obtain that the only real extrema of~\(f\) are at the four points~\({{r={\pm\sqrt[4]{\lfrac{3}{4}}}},{s={\pm{r}}},{x={\lfrac{2}{3}r^{3}}},{{y}={-\lfrac{2}{3}s^{3}}}}\), for which its value is always~$10$.
The additional constraint that $r$ and $s$ are positive, thus gives a single extremum.
Now, the Hessian matrix at that point is computed as:
\begin{equation}
{\mathbf{H}}_f\left(\frac{\sqrt[4]{3}}{\sqrt{2}},\frac{\sqrt[4]{3}}{\sqrt{6}},\frac{\sqrt[4]{3}}{\sqrt{2}},{-}\frac{\sqrt[4]{3}}{\sqrt{6}}\right)=\frac{4}{9}\begin{smatrix}
\frac{195}{\sqrt{3}}&\frac{69}{\sqrt{3}}&-15&15\\
\frac{69}{\sqrt{3}}&\frac{195}{\sqrt{3}}&-15&15\\
-15&-15&\frac{45}{\sqrt{3}}&\frac{9}{\sqrt{3}}\\
15&15&\frac{9}{\sqrt{3}}&\frac{45}{\sqrt{3}}
\end{smatrix}.
\end{equation}
Leaving out the factor~\(\lfrac{4}{9}\), the characteristic polynomial
of this matrix
is~\({Z^4-160\sqrt{3}Z^3+22536Z^2-362880\sqrt{3}Z+5143824}\) whose all
four roots~\({{\sqrt{7500}\pm\sqrt{5232}},{30\pm{12}\sqrt{3}}}\) are
positive.
Therefore the point is a local minimum.
From this, we see that~\(\sqrt{10}{}^3\) is a lower bound on the product of their Frobenius norms.
Furthermore, remark that this lower bound is reached, by the algorithm
which \textsc{hm} representation is given in~\Cref{eq:asopt}.
\end{proof}
\subsection{Rational approximations}\label{sec:rationalapproximations}
As stated in \Cref{rem:rationalHigherOrderApproximations}, by approximating the minimal point of the~\(\gamma_{2,1}\) growth factor presented in \Cref{prop:BestGrowthFactor}, we could construct further algorithms presented in this section.
\begin{equation}\label{eq:powrot}
\begin{smatrix}
\frac{4}{9}&{-}\frac{8}{9}&{-}\frac{8}{9}&{-}\frac{4}{9}\\
0&\frac{5}{9}&0&\frac{10}{9}\\
\frac{8}{9}&{-}\frac{2}{3}&0&0\\
\frac{4}{9}&\frac{2}{9}&\frac{8}{9}&\frac{4}{9}\\
0&{-}\frac{10}{9}&0&0\\
\frac{4}{9}&{-}\frac{1}{3}&{-}\frac{8}{9}&\frac{2}{3}\\
{-}\frac{4}{9}&{-}\frac{2}{9}&\frac{8}{9}&\frac{4}{9}\\
\end{smatrix};\
\begin{smatrix}
{-}\frac{3}{5}&\frac{4}{5}&{-}\frac{4}{5}&{-}\frac{3}{5}\\
0&\frac{1}{2}&0&-1\\
-1&\frac{1}{2}&0&0\\
0&\frac{5}{4}&0&0\\
\frac{3}{5}&{-}\frac{3}{10}&\frac{4}{5}&{-}\frac{2}{5}\\
\frac{2}{5}&\frac{3}{10}&{-}\frac{4}{5}&{-}\frac{3}{5}\\
{-}\frac{3}{5}&{-}\frac{9}{20}&{-}\frac{4}{5}&{-}\frac{3}{5}\\
\end{smatrix};\
\Transpose{\begin{smatrix}
\frac{9}{20}&\frac{9}{10}&\frac{9}{10}&{-}\frac{9}{20}\\
0&0&\frac{27}{40}&{-}\frac{9}{10}\\
{-}\frac{9}{8}&0&{-}\frac{9}{16}&0\\
\frac{9}{20}&\frac{9}{10}&\frac{9}{40}&\frac{9}{20}\\
{-}\frac{27}{40}&\frac{9}{10}&\frac{27}{80}&{-}\frac{9}{20}\\
0&0&{-}\frac{9}{8}&0\\
\frac{9}{20}&\frac{9}{10}&{-}\frac{9}{40}&{-}\frac{9}{20}
\end{smatrix}}.
\end{equation}
First, \cref{eq:powrot} is an orthogonal optimization of~\cref{eq:powers}, with one canonical vector in each of components of the \textsc{hm} representation.
Unfortunately some small non-powers of~\(2\) are then unavoidable, but this gives in~\cref{alg:powrot} an algorithm realizing the formula with fewer additions than that of~\cref{alg:powers}.
\begin{table}[h]%
\centering\fbox{\begin{minipage}{.95\linewidth}\vspace{-5pt}
\[\setlength\arraycolsep{4pt}\renewcommand{\arraystretch}{1}
\begin{array}{llll}
u_1{=}\frac{1}{2}a_{12}{+}a_{22}&
t_1{=}\frac{10}{9}u_1&
t_2{=}\frac{8}{9}a_{11}{-}\frac{2}{3}a_{12}&
t_4{=}\frac{10}{9}a_{12}\\[\smallskipamount]
t_3{=}\frac{8}{9}a_{21}{+}\frac{4}{9}\bigl(a_{11}{+}u_1\bigr)&
t_0{=}t_2{-}t_3&
t_5{=}t_1{+}t_0&
t_6{=}t_4{+}t_0\\[\smallskipamount]
v_1{=}\frac{1}{2}b_{12}&
s_1{=}v_1{-}b_{22}&
s_2{=}v_1{-}b_{11}&
s_3{=}\frac{5}{4}b_{12}\\[\smallskipamount]
s_4{=}\frac{2}{5}b_{22}{-}\frac{4}{5}b_{21}{+}\frac{3}{5}s_2&
s_0{=}s_1{+}s_4&
s_5{=}s_0{-}s_2&
s_6{=}s_3{-}s_0
\end{array}
\]
\vspace{-4pt}
\[%
\begin{array}{llll}
p_0{=}t_0{\cdot}s_0&
p_1{=}t_1{\cdot}s_1&
p_2{=}t_2{\cdot}s_2&
p_3{=}t_3{\cdot}s_3\\
p_4{=}t_4{\cdot}s_4&
p_5{=}t_5{\cdot}s_5&
p_6{=}t_6{\cdot}s_6
\end{array}
\]
\vspace{-4pt}
\[%
\begin{array}{llllll}
w_1{=}p_6{+}p_0{+}p_4&
w_2{=}p_5{+}p_6&
w_3{=}p_3{+}w_1&
w_4{=}p_2{+}p_4\\
w_5{=}p_1{+}w_1&
w_6{=}\frac{9}{20}w_3&&
c_{11}{=}w_6{-}\frac{9}{8}w_4\\[\smallskipamount]
c_{12}{=}\frac{9}{10}w_3&
\multicolumn{2}{l}{c_{21}{=}\frac{27}{40}w_5{-}\frac{9}{8}w_2{+}\frac{1}{2}c_{11}}&
c_{22}{=}w_6{-}\frac{9}{10}w_5
\end{array}
\]
\vspace{-5pt}
\end{minipage}}
\caption{\textsc{slp} of~\cref{eq:powrot},~\({\gamma_{2,1}\approx{12.2034}}\), with~\(24\) add.\ and~\(19\) mul.}\label{alg:powrot}%
\end{table}
\par
Finally, we present in~\cref{alg:0695,alg:0661}, successive higher-order rational approximations of the point~\({\bigl(\sqrt[4]{\lfrac{4}{3}},-\lfrac{1}{2}\bigr)}\) reducing the growth factor~\(\gamma_{2,1}\) to~\(12.0695\) (resp.~\(12.0661\)), approaching~\(12.06603\).
They then provide rational algorithms whose accuracy is pretty close to our best one, as shown in~\cref{fig:gamma}.
\begin{equation}\label{alg:0695}
\begin{aligned}
\begin{smatrix}
{-}\frac{167042}{345665} &  \frac{295936}{345665} &  {-}\frac{295936}{345665} &  {-}\frac{167042}{345665}\\
{-}\frac{178623}{345665} &  {-}\frac{51622047}{176980480} &  \frac{295936}{345665} &  \frac{167042}{345665}\\
0 &  {-}\frac{51622047}{88490240} &  0 &  \frac{334084}{345665}\\
-1 &  \frac{289}{512} &  0 &  0\\
0 &  \frac{289}{256} &  0 &  0\\
{-}\frac{167042}{345665} &  {-}\frac{24137569}{88490240} &  {-}\frac{295936}{345665} &  {-}\frac{167042}{345665}\\
{-}\frac{167042}{345665} &  \frac{24137569}{88490240} &  {-}\frac{295936}{345665} &  \frac{167042}{345665}
\end{smatrix};\
\begin{smatrix}
{-}\frac{256}{289} &  {-}\frac{1}{2} &  \frac{1}{2} &  {-}\frac{256}{289}\\
{-}\frac{345665}{295936} &  0 &  0 &  0\\
{-}\frac{345665}{591872} &  0 &  \frac{345665}{334084} &  0\\
{-}\frac{178623}{295936} &  -1 &  0 &  0\\
\frac{178623}{591872} &  \frac{1}{2} &  \frac{178623}{334084} &  \frac{256}{289}\\
{-}\frac{289}{1024} &  \frac{1}{2} &  {-}\frac{1}{2} &  \frac{256}{289}\\
{-}\frac{289}{1024} &  \frac{1}{2} &  \frac{1}{2} &  {-}\frac{256}{289}
\end{smatrix};\\
\begin{smatrix}
\frac{295936}{345665}&\frac{295936}{345665}&0&0&\frac{295936}{345665}&\frac{295936}{345665}&0\\
\frac{178623}{345665}&{-}\frac{167042}{345665}&1&0&\frac{178623}{345665}&\frac{178623}{345665}&0\\
{-}\frac{178623}{345665}&{-}\frac{178623}{345665}&0&1&\frac{167042}{345665}&{-}\frac{178623}{345665}&0\\
\frac{295936}{345665}&\frac{51622047}{176980480}&\frac{289}{512}&{-}\frac{289}{512}&\frac{51622047}{176980480}&{-}\frac{31906176129}{102294717440}&{-}\frac{345665}{295936}\\
\end{smatrix}.
\end{aligned}
\end{equation}
\begin{equation}\label{alg:0661}
\begin{aligned}
\begin{smatrix}
\frac{33124}{38165}&\frac{19208}{38165}&{-}\frac{19208}{38165}&\frac{33124}{38165}\\
\frac{33124}{38165}&\frac{19208}{38165}&\frac{18957}{38165}&\frac{1857786}{6449885}\\
0&\frac{38416}{38165}&0&\frac{3715572}{6449885}\\
0&0&1&{-}\frac{98}{169}\\
0&0&0&\frac{196}{169}\\
\frac{33124}{38165}&\frac{19208}{38165}&{-}\frac{19208}{38165}&{-}\frac{1882384}{6449885}\\
\frac{33124}{38165}&{-}\frac{19208}{38165}&{-}\frac{19208}{38165}&\frac{1882384}{6449885}\\
\end{smatrix};\quad
\begin{smatrix}
{-}\frac{169}{196}&{-}\frac{1}{2}&\frac{1}{2}&{-}\frac{169}{196}\\
\frac{38165}{33124}&0&0&0\\
\frac{38165}{66248}&0&{-}\frac{38165}{38416}&0\\
\frac{18957}{33124}&1&0&0\\
\frac{18957}{66248}&\frac{1}{2}&\frac{18957}{38416}&\frac{169}{196}\\
{-}\frac{49}{169}&\frac{1}{2}&{-}\frac{1}{2}&\frac{169}{196}\\
{-}\frac{49}{169}&\frac{1}{2}&\frac{1}{2}&{-}\frac{169}{196}\\
\end{smatrix};\\
\begin{smatrix}
{-}\frac{18957}{38165} & \frac{19208}{38165} & -1 & 0 & {-}\frac{18957}{38165} & {-}\frac{18957}{38165} & 0 \\
{-}\frac{33124}{38165} & {-}\frac{1857786}{6449885} & {-}\frac{98}{169} & \frac{98}{169} & {-}\frac{1857786}{6449885} & \frac{359367849}{1264177460} & \frac{38165}{33124} \\
\frac{33124}{38165} & \frac{33124}{38165} & 0 & 0 & \frac{33124}{38165} & \frac{33124}{38165} & 0 \\
{-}\frac{18957}{38165} & {-}\frac{18957}{38165} & 0 & 1 & \frac{19208}{38165} & {-}\frac{18957}{38165} & 0 \\
\end{smatrix}.
\end{aligned}
\end{equation}
\subsection{Further numerical experiments}\label{seq:furtherNumericalExperiments}
Following~\cite[\S~3.2]{Ozaki:2012:erfMM}, we study in \cref{fig:badcond} the effect of sparsification on random matrix with preassigned singular values and large condition number~\({\mathord{\approx}{10^{12}}}\) given by the Matlab function gallery 'randsvd'.
The fast variants behavior is unchanged while only the conventional algorithm performs better.
\begin{figure}[htbp]
\caption{Numerical Effect of Sparsification (\footnotesize{large conditioning})}\label{fig:badcond}
\Description[Numerical Effect of Sparsification]{Numerical Effect of Sparsification}
%
% GNUPLOT: LaTeX picture with Postscript
\begingroup
  \makeatletter
  \providecommand\color[2][]{%
    \GenericError{(gnuplot) \space\space\space\@spaces}{%
      Package color not loaded in conjunction with
      terminal option `colourtext'%
    }{See the gnuplot documentation for explanation.%
    }{Either use 'blacktext' in gnuplot or load the package
      color.sty in LaTeX.}%
    \renewcommand\color[2][]{}%
  }%
  \providecommand\includegraphics[2][]{%
    \GenericError{(gnuplot) \space\space\space\@spaces}{%
      Package graphicx or graphics not loaded%
    }{See the gnuplot documentation for explanation.%
    }{The gnuplot epslatex terminal needs graphicx.sty or graphics.sty.}%
    \renewcommand\includegraphics[2][]{}%
  }%
  \providecommand\rotatebox[2]{#2}%
  \@ifundefined{ifGPcolor}{%
    \newif\ifGPcolor
    \GPcolortrue
  }{}%
  \@ifundefined{ifGPblacktext}{%
    \newif\ifGPblacktext
    \GPblacktexttrue
  }{}%
  % define a \g@addto@macro without @ in the name:
  \let\gplgaddtomacro\g@addto@macro
  % define empty templates for all commands taking text:
  \gdef\gplbacktext{}%
  \gdef\gplfronttext{}%
  \makeatother
  \ifGPblacktext
    % no textcolor at all
    \def\colorrgb#1{}%
    \def\colorgray#1{}%
  \else
    % gray or color?
    \ifGPcolor
      \def\colorrgb#1{\color[rgb]{#1}}%
      \def\colorgray#1{\color[gray]{#1}}%
      \expandafter\def\csname LTw\endcsname{\color{white}}%
      \expandafter\def\csname LTb\endcsname{\color{black}}%
      \expandafter\def\csname LTa\endcsname{\color{black}}%
      \expandafter\def\csname LT0\endcsname{\color[rgb]{1,0,0}}%
      \expandafter\def\csname LT1\endcsname{\color[rgb]{0,1,0}}%
      \expandafter\def\csname LT2\endcsname{\color[rgb]{0,0,1}}%
      \expandafter\def\csname LT3\endcsname{\color[rgb]{1,0,1}}%
      \expandafter\def\csname LT4\endcsname{\color[rgb]{0,1,1}}%
      \expandafter\def\csname LT5\endcsname{\color[rgb]{1,1,0}}%
      \expandafter\def\csname LT6\endcsname{\color[rgb]{0,0,0}}%
      \expandafter\def\csname LT7\endcsname{\color[rgb]{1,0.3,0}}%
      \expandafter\def\csname LT8\endcsname{\color[rgb]{0.5,0.5,0.5}}%
    \else
      % gray
      \def\colorrgb#1{\color{black}}%
      \def\colorgray#1{\color[gray]{#1}}%
      \expandafter\def\csname LTw\endcsname{\color{white}}%
      \expandafter\def\csname LTb\endcsname{\color{black}}%
      \expandafter\def\csname LTa\endcsname{\color{black}}%
      \expandafter\def\csname LT0\endcsname{\color{black}}%
      \expandafter\def\csname LT1\endcsname{\color{black}}%
      \expandafter\def\csname LT2\endcsname{\color{black}}%
      \expandafter\def\csname LT3\endcsname{\color{black}}%
      \expandafter\def\csname LT4\endcsname{\color{black}}%
      \expandafter\def\csname LT5\endcsname{\color{black}}%
      \expandafter\def\csname LT6\endcsname{\color{black}}%
      \expandafter\def\csname LT7\endcsname{\color{black}}%
      \expandafter\def\csname LT8\endcsname{\color{black}}%
    \fi
  \fi
    \setlength{\unitlength}{0.0500bp}%
    \ifx\gptboxheight\undefined%
      \newlength{\gptboxheight}%
      \newlength{\gptboxwidth}%
      \newsavebox{\gptboxtext}%
    \fi%
    \setlength{\fboxrule}{0.5pt}%
    \setlength{\fboxsep}{1pt}%
    \definecolor{tbcol}{rgb}{1,1,1}%
\begin{picture}(4320.00,4160.00)%
    \gplgaddtomacro\gplbacktext{%
      \csname LTb\endcsname%%
      \put(219,990){\makebox(0,0)[r]{\strut{}$10^{-15}$}}%
      \csname LTb\endcsname%%
      \put(219,1743){\makebox(0,0)[r]{\strut{}$10^{-14}$}}%
      \csname LTb\endcsname%%
      \put(219,2495){\makebox(0,0)[r]{\strut{}$10^{-13}$}}%
      \csname LTb\endcsname%%
      \put(219,3247){\makebox(0,0)[r]{\strut{}$10^{-12}$}}%
      \csname LTb\endcsname%%
      \put(338,174){\makebox(0,0){\strut{}$32$}}%
      \csname LTb\endcsname%%
      \put(1327,174){\makebox(0,0){\strut{}$64$}}%
      \csname LTb\endcsname%%
      \put(2317,174){\makebox(0,0){\strut{}$128$}}%
      \csname LTb\endcsname%%
      \put(3307,174){\makebox(0,0){\strut{}$256$}}%
      \csname LTb\endcsname%%
      \put(4297,174){\makebox(0,0){\strut{}$512$}}%
    }%
    \gplgaddtomacro\gplfronttext{%
      \csname LTb\endcsname%%
      \put(728,3808){\makebox(0,0)[l]{\strut{}Sparse Winograd, by \cite{Karstadt:2017aa}}}%
      \csname LTb\endcsname%%
      \put(728,3634){\makebox(0,0)[l]{\strut{}Winograd \cite{Winograd:1977:complexite}}}%
      \csname LTb\endcsname%%
      \put(728,3460){\makebox(0,0)[l]{\strut{}Sparse Strassen}}%
      \csname LTb\endcsname%%
      \put(728,3285){\makebox(0,0)[l]{\strut{}Strassen \cite{strassen:1969}}}%
      \csname LTb\endcsname%%
      \put(728,3111){\makebox(0,0)[l]{\strut{}Sparse \cref{alg:asopt} (\cref{alg:CoBschwartzopt})}}%
      \csname LTb\endcsname%%
      \put(728,2937){\makebox(0,0)[l]{\strut{}\cref{alg:asopt,eq:asopt}}}%
      \csname LTb\endcsname%%
      \put(728,2762){\makebox(0,0)[l]{\strut{}Conv.}}%
      \csname LTb\endcsname%%
      \put(0,4074){\makebox(0,0){\strut{}error}}%
      \csname LTb\endcsname%%
      \put(2296,0){\makebox(0,0){\strut{}square matrix dimension}}%
    }%
    \gplbacktext
    \put(0,0){\includegraphics[width={216.00bp},height={208.00bp}]{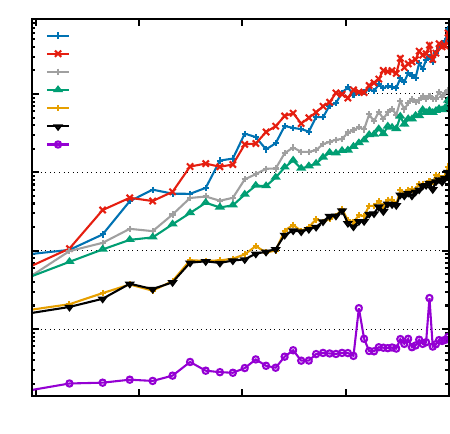}}%
    \gplfronttext
  \end{picture}%
\endgroup
\end{figure}

We here show more evidence on the practical accuracy of the algorithms, with respect to their~\(\gamma_{2,1}\) growth factor.
\Cref{fig:gammaunif} compares the main possibilities on a
uniform~\([-1,1]\) distribution, while~\cref{fig:gamma} was using a
normal distribution. The behavior is similar, with again our best
variant one or two orders of magnitude more accurate, and being quite
close to that of the conventional algorithm.
\begin{figure}[htbp]
\caption{Numerical accuracy for uniform [-1,1]
  distribution}\label{fig:gammaunif}
%
% GNUPLOT: LaTeX picture with Postscript
\begingroup
  \makeatletter
  \providecommand\color[2][]{%
    \GenericError{(gnuplot) \space\space\space\@spaces}{%
      Package color not loaded in conjunction with
      terminal option `colourtext'%
    }{See the gnuplot documentation for explanation.%
    }{Either use 'blacktext' in gnuplot or load the package
      color.sty in LaTeX.}%
    \renewcommand\color[2][]{}%
  }%
  \providecommand\includegraphics[2][]{%
    \GenericError{(gnuplot) \space\space\space\@spaces}{%
      Package graphicx or graphics not loaded%
    }{See the gnuplot documentation for explanation.%
    }{The gnuplot epslatex terminal needs graphicx.sty or graphics.sty.}%
    \renewcommand\includegraphics[2][]{}%
  }%
  \providecommand\rotatebox[2]{#2}%
  \@ifundefined{ifGPcolor}{%
    \newif\ifGPcolor
    \GPcolortrue
  }{}%
  \@ifundefined{ifGPblacktext}{%
    \newif\ifGPblacktext
    \GPblacktexttrue
  }{}%
  % define a \g@addto@macro without @ in the name:
  \let\gplgaddtomacro\g@addto@macro
  % define empty templates for all commands taking text:
  \gdef\gplbacktext{}%
  \gdef\gplfronttext{}%
  \makeatother
  \ifGPblacktext
    % no textcolor at all
    \def\colorrgb#1{}%
    \def\colorgray#1{}%
  \else
    % gray or color?
    \ifGPcolor
      \def\colorrgb#1{\color[rgb]{#1}}%
      \def\colorgray#1{\color[gray]{#1}}%
      \expandafter\def\csname LTw\endcsname{\color{white}}%
      \expandafter\def\csname LTb\endcsname{\color{black}}%
      \expandafter\def\csname LTa\endcsname{\color{black}}%
      \expandafter\def\csname LT0\endcsname{\color[rgb]{1,0,0}}%
      \expandafter\def\csname LT1\endcsname{\color[rgb]{0,1,0}}%
      \expandafter\def\csname LT2\endcsname{\color[rgb]{0,0,1}}%
      \expandafter\def\csname LT3\endcsname{\color[rgb]{1,0,1}}%
      \expandafter\def\csname LT4\endcsname{\color[rgb]{0,1,1}}%
      \expandafter\def\csname LT5\endcsname{\color[rgb]{1,1,0}}%
      \expandafter\def\csname LT6\endcsname{\color[rgb]{0,0,0}}%
      \expandafter\def\csname LT7\endcsname{\color[rgb]{1,0.3,0}}%
      \expandafter\def\csname LT8\endcsname{\color[rgb]{0.5,0.5,0.5}}%
    \else
      % gray
      \def\colorrgb#1{\color{black}}%
      \def\colorgray#1{\color[gray]{#1}}%
      \expandafter\def\csname LTw\endcsname{\color{white}}%
      \expandafter\def\csname LTb\endcsname{\color{black}}%
      \expandafter\def\csname LTa\endcsname{\color{black}}%
      \expandafter\def\csname LT0\endcsname{\color{black}}%
      \expandafter\def\csname LT1\endcsname{\color{black}}%
      \expandafter\def\csname LT2\endcsname{\color{black}}%
      \expandafter\def\csname LT3\endcsname{\color{black}}%
      \expandafter\def\csname LT4\endcsname{\color{black}}%
      \expandafter\def\csname LT5\endcsname{\color{black}}%
      \expandafter\def\csname LT6\endcsname{\color{black}}%
      \expandafter\def\csname LT7\endcsname{\color{black}}%
      \expandafter\def\csname LT8\endcsname{\color{black}}%
    \fi
  \fi
    \setlength{\unitlength}{0.0500bp}%
    \ifx\gptboxheight\undefined%
      \newlength{\gptboxheight}%
      \newlength{\gptboxwidth}%
      \newsavebox{\gptboxtext}%
    \fi%
    \setlength{\fboxrule}{0.5pt}%
    \setlength{\fboxsep}{1pt}%
    \definecolor{tbcol}{rgb}{1,1,1}%
\begin{picture}(4320.00,3600.00)%
    \gplgaddtomacro\gplbacktext{%
      \csname LTb\endcsname%%
      \put(219,962){\makebox(0,0)[r]{\strut{}$10^{-15}$}}%
      \csname LTb\endcsname%%
      \put(219,1603){\makebox(0,0)[r]{\strut{}$10^{-14}$}}%
      \csname LTb\endcsname%%
      \put(219,2244){\makebox(0,0)[r]{\strut{}$10^{-13}$}}%
      \csname LTb\endcsname%%
      \put(219,2884){\makebox(0,0)[r]{\strut{}$10^{-12}$}}%
      \csname LTb\endcsname%%
      \put(338,174){\makebox(0,0){\strut{}$32$}}%
      \csname LTb\endcsname%%
      \put(1327,174){\makebox(0,0){\strut{}$64$}}%
      \csname LTb\endcsname%%
      \put(2317,174){\makebox(0,0){\strut{}$128$}}%
      \csname LTb\endcsname%%
      \put(3307,174){\makebox(0,0){\strut{}$256$}}%
      \csname LTb\endcsname%%
      \put(4297,174){\makebox(0,0){\strut{}$512$}}%
    }%
    \gplgaddtomacro\gplfronttext{%
      \csname LTb\endcsname%%
      \put(0,3445){\makebox(0,0){\strut{}error}}%
      \csname LTb\endcsname%%
      \put(2296,0){\makebox(0,0){\strut{}square matrix dimension}}%
      \csname LTb\endcsname%%
      \put(728,3248){\makebox(0,0)[l]{\strut{}Sparse Winograd, by \cite{Karstadt:2017aa}}}%
      \csname LTb\endcsname%%
      \put(728,3074){\makebox(0,0)[l]{\strut{}Winograd \cite{Winograd:1977:complexite}}}%
      \csname LTb\endcsname%%
      \put(728,2900){\makebox(0,0)[l]{\strut{}Sparse Strassen}}%
      \csname LTb\endcsname%%
      \put(728,2725){\makebox(0,0)[l]{\strut{}Strassen \cite{strassen:1969}}}%
      \csname LTb\endcsname%%
      \put(728,2551){\makebox(0,0)[l]{\strut{}\cref{alg:CoBschwartzopt}}}%
      \csname LTb\endcsname%%
      \put(728,2377){\makebox(0,0)[l]{\strut{}\cref{alg:asopt,eq:asopt}}}%
      \csname LTb\endcsname%%
      \put(728,2202){\makebox(0,0)[l]{\strut{}Conv.}}%
    }%
    \gplbacktext
    \put(0,0){\includegraphics[width={216.00bp},height={180.00bp}]{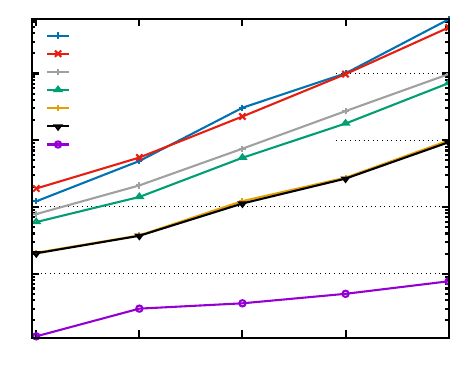}}%
    \gplfronttext
  \end{picture}%
\endgroup
\end{figure}

%\clearpage
%
\subsection{Cancellation-free search}\label{app:suppl}
\Cref{alg:factoringout} describes a common sub-expression elimination heuristic that reduced the number of operations in our algorithms.
\begin{footnotesize}
\begin{algorithm}
\caption{Cancellation-free optimization of a linear operator}\label{alg:factoringout}
\begin{algorithmic}[1]
\Require{$\mat{M}\in\Field^{m{\times}n}$.}
\Ensure{A straight-line program computing $x\rightarrow{\mat{M}}{\cdot}x$.}
\Repeat\hfill\Comment{Precomputing all repeated pairs}
\State{In each row list all pairs of indices of non-zero coefficients;}
\State{Among all the rows, find the pair(s) with the maximal number of co-linear representatives;}
\State{In case of ties, exhaust all the possibilities with maximal pairs (or choose one using a score like that of~\cite[\S~3.2]{Boyar:2013aa});}
\State{Precompute the chosen pair (in a temporary variable);}
\State{Factor this pair out of all the rows: that is removing the pair from all rows but add a new column to the matrix (representing that pair) with the co-linear multiple of that temporary variable;}
\Until{no pair has more than $1$ representative}
\Statex\Comment{Multipliers by columns:}
\ForAll{equal coefficients in a column (up to sign)}
\State{Compute the product by the absolute value in a temporary variable;}
\State{Factor this coefficient out: remove it from the column, add a new column (representing that product) with a~\(\pm{1}\) in the corresponding row(s);}
\EndFor
\Statex\Comment{Multipliers by rows:}
\ForAll{equal coefficients in a row (up to sign)}
\State{Compute the sum (or subtraction) of variables with that same coefficients in a temporary variable;}
\State{Factor the coefficient out: remove it from the row, but add a new column (representing that sum/subtraction) with the coefficient in the same row;}
\EndFor
\Statex\Comment{Now the matrix has been simplified}
\State{Apply the remaining linear operations of the matrix.}
\end{algorithmic}
\end{algorithm}
\end{footnotesize}
\end{document}
